\documentclass[12pt]{amsart}
\usepackage{geometry} 
\geometry{a4paper} 
\usepackage[latin1]{inputenc}
\usepackage{amssymb}
\usepackage{graphicx}
\usepackage[all,cmtip]{xy}
\input xy
\xyoption{all}

\newcommand{\SC}{{\mathcal{C}}}

\newcommand{\SF}{{\mathcal{F}}}

\newcommand{\SJ}{{\mathcal{J}}}

\newcommand{\SL}{{\mathcal{L}}}

\newcommand{\SP}{{\mathcal{P}}}
\newcommand{\SQ}{{\mathcal{Q}}}

\newcommand{\SpS}{{\mathcal{S}}}

\newcommand{\Z}{\mathbb{Z}}
\newcommand{\C}{\mathbb{C}}

\newcommand{\N}{\mathbb{N}}
\newcommand{\R}{\mathbb{R}}

\newcommand{\RP}{\mathbb{RP}}


\newtheorem{proposition}{Proposition}[section]
\newtheorem{theorem}[proposition]{Theorem}
\newtheorem{definition}[proposition]{Definition}
\newtheorem{lemma}[proposition]{Lemma}

\newtheorem{corollary}[proposition]{Corollary}
\newtheorem{remark}[proposition]{Remark}

\title{Geometric Quantization of real polarizations via sheaves}
\author{Eva Miranda}\address{ Eva Miranda,
Departament de Matem\`{a}tica Aplicada I, Universitat Polit\`{e}cnica de Catalunya, EPSEB, Avinguda del Doctor Mara\~{n}\'{o}n, 44-50, 08028, Barcelona, Spain, \it{e-mail: eva.miranda@upc.edu}}
\author{Francisco Presas}\address{ Francisco Presas, ICMAT, UAM-UCM-UC3M, Nicol\'{a}s Cabrera, n 13-15, Campus Cantoblanco UAM, 28049 Madrid \it{e-mail: fpresas@icmat.es}}

 \thanks{Eva Miranda has been partially supported by the DGICYT/FEDER project MTM2009-07594: Estructuras Geometricas: Deformaciones, Singularidades y Geometria Integral until December 2012. Her research is partially supported by the project GEOMETRIA ALGEBRAICA, SIMPLECTICA, ARITMETICA Y APLICACIONES with reference: MTM2012-38122-C03-01  starting in January 2013. Francisco Presas has has been partially supported by the DGICYT/FEDER project MTM2010-19389. Both authors have been partially supported by ESF network CAST, \emph{Contact and Symplectic Topology}. }
\date{} 

\begin{document}

\begin{abstract}
In this article we develop tools to compute the Geometric Quantization of a symplectic manifold with respect to a regular Lagrangian foliation via sheaf cohomology and obtain important new applications in the case of real polarizations.  The starting point is the definition of representation spaces due to Kostant.
Besides the classical examples of Gelfand-Cetlin systems due to Guillemin and Sternberg \cite{guilleminandsternberg} very few examples  of explicit computations of real polarizations are known. The computation of Geometric Quantization in \cite{guilleminandsternberg} is based on a theorem due to \'Sniatycki for fibrations \cite{sniatpaper}
which identifies the representation space with the set of Bohr-Sommerfeld leaves determined by the integral action coordinates.

 In this article we check that the associated sheaf cohomology apparatus of Geometric Quantization satisfies Mayer-Vietoris and K\"unneth formulae. As a consequence,  a new short proof of this classical result for fibrations due to \'Sniatycki is obtained. We also compute Geometric Quantization with respect to  any {\it generic} regular Lagrangian foliation on a $2$-torus and the case of the irrational flow. In the way, we recover  some classical results in the computation of foliated cohomology of these polarizations.
\end{abstract}

\maketitle

\tableofcontents

\section{Introduction}

Geometric Quantization attempts to create  a
\lq\lq (one-way) dictionary" that \lq\lq translates"  classical
systems into quantum systems.

In this way, a quantum system is
associated to a classical system in which observables (smooth
functions) become operators of a Hilbert space and the classical
Poisson bracket becomes the commutator of operators.  We refer to the classical references of Kirillov \cite{kirillov} and Woodhouse \cite{woodhouse} for an introduction to geometric quantization.

Grosso modo, following Dirac's ideas a quantization is a linear mapping of the
Poisson algebra associated to functions on the manifold into the set of operators on some
(pre-)Hilbert space, having some additional properties.

The first step in the Geometric Quantization scheme, is the prequantization which almost realizes the quantization scheme up to a condition (where the technical condition of \lq\lq completeness" of commuting operators is not required).

The guinea pig for any model of Geometric Quantization is the cotangent bundle of a manifold endowed with the canonical Liouville form.
The model for prequantization  of a cotangent bundle  was constructed by  Segal \cite{segal} in the 60's. In trying to extend this construction to general symplectic manifolds the integrality of the cohomology class of the symplectic form is required.

Unlike the quantization by deformation approach in which the classical systems can be seen as a limit of the deformation, in the geometric quantization approach, the process only has partial memory of the information of the initial system.
 This is why the choice of additional geometric structures
(polarizations) plays an important role.
The classical motivation for considering this polarization comes from the distinction between \lq\lq momentum" and \lq\lq position" which is native from mathematical formulation  of mechanical systems in the cotangent bundle of a manifold.

A desired property is
that the representation space does not depend on the polarization and this property is satisfied in the case of considering K\"{a}hler polarizations. Much attention has been devoted to the case of Geometric Quantization via Kähler quantizations but the case of real polarizations has been less explored. In this article we are interested in considering Lagrangian foliations as real polarizations.

Our point of view in this big endeavour is to
construct a \lq \lq representation space" in the case of real polarizations.

The initial idea of quantization is to consider as initial representation space the vector space of flat sections of a line bundle associated to the symplectic manifold in the directions of the polarization. These flat sections exist only locally for a generic leaf of a real polarization, making the quantization vector space usually trivial. This is why Kostant's approach to quantization \cite{kostant} proposes to replace the sheaf of flat sections along a polarization by
cohomology groups associated to this sheaf as representation spaces.
In this  article we will not discuss either the (pre)Hilbert
structure of this space nor the quantization rules.

An interesting related issue is the notion of Bohr-Sommerfeld leaves and its contribution to Geometric Quantization. A leaf of the polarization is Bohr-Sommerfeld if there are non-trivial flat sections of the bundle globally defined along it.
  The characterization of
Bohr-Sommerfeld leaves for real polarizations given by regular fibrations
is a well-known result by Guillemin and Sternberg
(\cite{guilleminandsternberg}).
In particular, the set of
Bohr-Sommerfeld leaves is discrete and is given by \lq\lq action"
coordinates that are well-defined in a neighbourhood of a compact leaf.
\begin{theorem}[Guillemin-Sternberg] If the polarization is a
regular Lagrangian fibration with compact leaves over a simply connected base
$B$, then the Bohr-Sommerfeld set is discrete and assuming that the
zero-fiber is a Bohr-Sommerfeld leaf, the Bohr-Sommerfeld set is
given by $BS=\{p\in M, (f_1(p),\dots, f_n(p))\in\mathbb Z^n\}$
where $f_1,\dots,f_n$ are global action coordinates on $B$.
\end{theorem}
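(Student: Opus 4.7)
The plan is to reduce the statement to the Arnold--Liouville theorem combined with an explicit holonomy computation for the prequantum connection restricted to each fiber.

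First, I would invoke Arnold--Liouville together with its global version due to Duistermaat: since $\pi \colon M \to B$ is a regular Lagrangian fibration with compact connected fibers and $B$ is simply connected, each fiber is a torus $T^n$, the period lattice $\Lambda \subset T^*B$ has trivial monodromy, and there exist globally defined action coordinates $f_1, \dots, f_n$ on $B$ and (locally over $B$) dual angle coordinates $\h_1, \dots, \h_n \in \R/\Z$ on the fibers such that $\o = \sum_i df_i \wedge d\h_i$. Action coordinates are determined up to a translation of $\R^n$; the hypothesis that the zero fiber is Bohr--Sommerfeld will be used at the end to fix this translation so that the Bohr--Sommerfeld set is precisely $\{f \in \Z^n\}$ rather than a shifted copy.

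Second, I would reformulate the Bohr--Sommerfeld condition as a holonomy condition. Each fiber $F_b = \pi^{-1}(b)$ is Lagrangian, so $\o|_{F_b} = 0$ and the prequantum connection $\nabla$ restricts to a flat connection on $L|_{F_b}$. A globally defined, nowhere-vanishing flat section on $F_b$ exists if and only if the holonomy representation $\mathrm{Hol}_b \colon \pi_1(F_b) \iso \Z^n \to U(1)$ is trivial.

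Third, I would compute this holonomy explicitly. In a tubular neighbourhood of $F_b$, the symplectic form admits the primitive $\a = \sum_i f_i\, d\h_i$, and in a local trivialization of $L$ compatible with $\a$ one has $\nabla = d - 2\pi i\, \a$. On the fiber, where $df_i \equiv 0$, the flatness equation $\nabla g = 0$ becomes $dg = 2\pi i \sum_i f_i(b)\, d\h_i \cdot g$, whose solutions are $g(\h) = C \exp\!\left(2\pi i \sum_i f_i(b)\, \h_i\right)$. Such a section descends to $F_b = \R^n/\Z^n$ if and only if $f_i(b) \in \Z$ for every $i$. The normalization from the zero fiber being Bohr--Sommerfeld then pins the statement to the exact form claimed.

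The main obstacle is the passage from a purely local picture to genuinely global action coordinates on $B$: Arnold--Liouville is a priori local, and globalising it requires trivialising both the monodromy of the period lattice and the affine-structure class that would obstruct the existence of a Lagrangian section. Both obstructions vanish thanks to $\pi_1(B)=0$, and once the global action-angle framework is set up, the Bohr--Sommerfeld characterisation is essentially a one-line ODE integration on $T^n$.
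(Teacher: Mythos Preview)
The paper does not actually prove this theorem. It is quoted in the introduction as a known result of Guillemin and Sternberg (with a reference to \cite{guilleminandsternberg}) and is used as motivation and background; the only comment the paper offers by way of justification is the sentence immediately following the statement, noting that it ``connects with Arnold--Liouville--Mineur theorem \cite{arnold}, \cite{duistermaat} for action-angle coordinates of integrable systems to Geometric Quantization.'' So there is no proof in the paper to compare your proposal against.

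That said, your outline is the standard and correct argument, and it is precisely what the paper's one-line remark is gesturing at: global action coordinates from Duistermaat's analysis under $\pi_1(B)=0$, followed by the explicit integration of the flat-section equation on each torus fibre to read off the holonomy as $\exp(2\pi i f_j(b))$. One small point of overreach in your last paragraph: the Lagrangian/Chern obstruction in $H^2(B,\Lambda)$ governs the existence of a global Lagrangian section (hence global \emph{angle} coordinates), not of global \emph{action} coordinates; for the latter you only need trivial monodromy of the period lattice, which $\pi_1(B)=0$ gives, together with $H^1(B;\R)=0$ to integrate the resulting closed $1$-forms. Since your holonomy computation is fibrewise and uses only local angle coordinates, you never actually need the second obstruction to vanish, so this does not affect the argument.
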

This result connects with Arnold-Liouville-Mineur theorem \cite{arnold}, \cite{duistermaat} for
action-angle coordinates of integrable systems to Geometric Quantization. When we consider
toric manifolds, the base $B$ may be identified with the image of the
moment map by the toric action (Delzant polytope).

 In this case quantization is given by precisely the
following theorem of \'{S}niatycki \cite{sniatpaper}:
\begin{theorem}[\'{S}niatycki] If the leaf space $B^n$ is a
Hausdorff manifold and the natural projection $\pi:M^{2n}\to B^n$ is
a fibration with compact fibres, then all the cohomology groups
vanish except for degree half of the dimension of the manifold.
Furthermore, $\mathcal{Q}(M^{2n})=H^n(M^{2n},\mathcal{J})$, and the
dimension of $H^n(M^{2n},\mathcal{J})$ is the number of
Bohr-Sommerfeld leaves.
\end{theorem}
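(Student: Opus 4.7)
The strategy is to localise via Arnold-Liouville and then assemble using the Mayer-Vietoris and K\"unneth tools established earlier in the paper.

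First, the hypothesis that $\pi$ is a Lagrangian fibration with compact fibres over a Hausdorff base allows one to apply Arnold-Liouville: $B$ admits a cover by contractible opens $\{U_\alpha\}$ (which can be taken to be products of intervals) such that each preimage is symplectomorphic to a product $\pi^{-1}(U_\alpha) \cong (D^1 \times S^1)^n$ with action-angle coordinates $(I_j,\theta_j)$. Moreover on this local model the prequantum connection one-form restricted to a fibre direction is $\sum_j I_j \, d\theta_j$, so the sheaf $\SJ$ respects the product decomposition.

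Second, I would carry out the $1$-dimensional computation of $H^*(D^1\times S^1,\SJ)$ directly by Mayer-Vietoris. Covering $S^1$ by two contractible arcs produces two open sets $V_i = D^1 \times (S^1 \setminus \{\star_i\})$ on which every local flat section is of the form $f(I)\,e^{-iI\theta}$, so $\SJ(V_i)\cong C^\infty(D^1)$. The overlap $V_1\cap V_2$ has two connected components and the Mayer-Vietoris coboundary is
$$(f_1,f_2)\longmapsto (f_1-f_2,\; f_1 - f_2\, e^{-2\pi i I}).$$
Smoothness forces its kernel to be zero, and its cokernel is identified by evaluation at $I=k$ with $\bigoplus_{k\in\Z\cap D^1}\C$, since $1-e^{-2\pi i I}$ has simple zeros exactly at the integers. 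Thus $H^0=0$ and $H^1\cong\C^{|\Z\cap D^1|}$, generated by the Bohr-Sommerfeld leaves in the chart.

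Third, apply the K\"unneth formula of the preceding section to conclude
$$H^k(\pi^{-1}(U_\alpha),\SJ)\cong \bigoplus_{k_1+\cdots+k_n=k}\bigotimes_{j=1}^n H^{k_j}(D^1\times S^1,\SJ),$$
which is concentrated in degree $k=n$ with dimension equal to the number of Bohr-Sommerfeld leaves lying in $\pi^{-1}(U_\alpha)$. The same holds for every finite intersection $\pi^{-1}(U_{\alpha_0}\cap\cdots\cap U_{\alpha_p})$, and the restriction maps are simply the natural inclusions of the corresponding Bohr-Sommerfeld index sets. Running Mayer-Vietoris inductively over the cover (equivalently, the associated spectral sequence has all rows $q\neq n$ zero and the row $q=n$ collapses because the resulting \v{C}ech complex computes the space of functions on the discrete set of Bohr-Sommerfeld leaves), one concludes $H^n(M,\SJ)\cong\bigoplus_{\ell\in BS}\C$ and $H^k(M,\SJ)=0$ for $k\neq n$.

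The main obstacle I expect is the K\"unneth step: the sheaf $\SJ$ is twisted by the prequantum connection and is not a sheaf of modules over a commutative ring in the usual algebraic sense, so K\"unneth has to be set up carefully for the twisted flat-section sheaf using the fact that the prequantum connection splits along the action-angle product. Once that ingredient from the earlier sections is in hand together with the $1$-dimensional computation above, the Mayer-Vietoris patching is essentially formal because local cohomology is concentrated in a single degree and the transition maps are purely combinatorial.
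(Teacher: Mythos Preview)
Your proposal is correct and follows essentially the same route as the paper: reduce to action--angle product charts via Arnold--Liouville, compute the $2$-dimensional building block $(-\epsilon,\epsilon)\times S^1$ by a two-arc Mayer--Vietoris argument (this is exactly the paper's Lemma~4.1), apply the K\"unneth formula of Section~3 to obtain the cohomology of the $n$-fold product concentrated in degree $n$, and then patch with Mayer--Vietoris. The only organisational difference is that the paper first refines the base cover so that each open set contains at most one Bohr--Sommerfeld leaf, making all overlaps acyclic and the global Mayer--Vietoris step immediate, whereas you allow several integral values of $I$ in each $D^1$ and then appeal to a \v{C}ech/spectral-sequence collapse on the discrete Bohr--Sommerfeld set; both variants are valid and equivalent in spirit.
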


As a nice application of these results, Guillemin and Sternberg obtained in \cite{guilleminandsternberg} the computation of the representation space in the case of Gelfand-Cetlin systems on $\mathfrak{u(n)}^*$. Furthermore, the construction of explicit action coordinates  in \cite{guilleminandsternberg} for these systems connects directly with the field of representation theory since via the identification explained above it is possible to  compute the dimension of the spaces on which
a representation of a prescribed maximal weight is given.

Besides the case of Gelfand-Cetlin systems and some singular fibrations cases considered in \cite{mhthesis}, \cite{hamiltonmiranda} and \cite{solha} very few examples of real polarizations are known for which explicit computations of Geometric Quantization are given.  In this article we  want to make a step forward in this direction and besides presenting more examples which are missing in the literature we give a systematic method for computing Geometric Quantization of  real regular polarizations. Furthermore, besides revisiting old computations from a new perspective we explicitly compute generically the $2$-dimensional regular case.
In the way, we recover some results of foliated cohomology.

In this article we will present a \emph{Geometric Quantization computation kit} to deal with this sheaf cohomology using a fine resolution of it. As a first application we give a short proof of this theorem of \'Sniatycki stated above using a Künneth formula.

 We also find applications to compute Geometric Quantization of general Lagrangian foliations which are not in general a fibration.
In particular, we will completely determine Geometric Quantization in the case of line fields on a torus via the classification of these foliations up to topological conjugation due to Denjoy and Kneser \cite{denjoy, kneser}.

A different approach to compute Geometric Quantization  is the one of \v{C}ech cohomology. This approach was used in \cite{mhthesis}
and \cite{hamiltonmiranda} and turns out
out to be useful when
 we consider integrable systems with singularities.
We will apply our Geometric Quantization computation kit to reprove some results which were obtained via the \v{C}ech  approach
like the theorem of \'{S}niaticky \cite{sniatpaper}.

\vspace{5mm}
{\bf{Organization of this paper:}}

\vspace{5mm}

In Section 2 we present the Kostant complex  in this approach to Geometric Quantization. Even if this article focuses on the Geometric Quantization case, when it comes to the scene we will also mention the corresponding results for foliated cohomology, considered as a limit case with $\omega \to 0$. In Section 3 we prove Mayer-Vietoris and a Künneth formula for this complex. These results connect directly with the work of Bertelson \cite{bertelsonkunneth} to prove a K\"{u}nneth formula for foliated cohomology.

In Section 4 we present a short proof of the theorem of \'Sniatycki to compute Geometric Quantization of a regular fibration. In Section 5 we apply these tools to give the representation space when the polarization considered is the linear irrational flow on the torus.  In particular, we recover former results of
 El Kacimi-Alaoui for foliated cohomology \cite{elkacimi}.

 In the last section, we use the classification of foliations on the $2$-torus up to topological equivalence due to Denjoy and Kneser \cite{denjoy}, \cite{kneser} together with the Mayer-Vietoris argument to compute the Geometric Quantization associated to any generic foliation of the $2$-torus.

\section{Geometric Quantization  \`a la Kostant}
Let $(M^{2n}, \omega)$ be a closed symplectic manifold of integer class and let $L$ be an associated prequantizable line bundle, i.e. an Hermitian complex line bundle\footnote{Given a symplectic form $\omega\in H^2(M,\mathbb R)$  with integer class $[\omega]$, the lift to $H^2(M,\mathbb Z)$ is not unique in general, we have a exact sequence $Tor(H^2(M,\mathbb Z))\longrightarrow H^2(M,\mathbb Z)\longrightarrow H^2(M,\mathbb R)$, for manifolds with non-vanishing  $Tor(H^2(M,\mathbb Z))$ this line bundle is not unique.} equipped with a compatible connection whose curvature equals $-i\omega$.

A real polarization $\SP$ over $(M^{2n}, \omega)$ is a foliation whose leaves are Lagrangian with respect to $\omega$. Given a hermitian vector bundle over $(M^{2n}, \omega)$ endowed with a connection $\nabla$, we define a section $s:M \to L$ to be flat along the polarization $\SP$ if the operator
\begin{eqnarray*}
\nabla_{\SP} s: \SP  \hookrightarrow TM & \to & L \\
v & \to & \nabla_v s
\end{eqnarray*}
is the null operator. In other words, if the covariant derivative of the section vanishes along the foliation directions.

One could consider the set of flat sections of this bundle to start constructing a representation space for Geometric Quantization. In order to induce a metric on the space of sections which can endow this space with a (pre)Hilbert structure, we consider another line bundle, the so-called metaplectic correction. This line bundle also has a physical meaning since it captures some \emph{Bohr-Sommerfeld leaves} in the Kähler case which are not captured otherwise \cite{kirwin}.

In this article
we will consider the metaplectic correction (which can be seen as a correction in the line bundle) even if we do not consider here the endeavour of completing the representation space to endow it with a Hilbert space structure. This effort can be seen as a first step in this direction.


We say that $\SP$ admits a metalinear correction if the complex determinant bundle $N= \bigwedge^n_{\C} (\SP \otimes_{\R} \C)$ admits a square root $N^{1/2}$. This is equivalent to  the evenness of the first Chern class of $N$,  $c_1(N)$. This can be easily proven using the isomorphism between  Picard group of isomorphism classes of complex line bundles and $H^2(M,\mathbb Z)$. The group structure on the Picard group is an  abelian group with group operation the tensor product and  therefore $c_1(L\otimes L')=c_1(L)+ c_1(L')$. Thus  $\SP$  admits a metaplectic correction if and only if there exists an element $e\in H^2(M,\Z)$ such that $2e=c_1(N)$.


Let us prove that $N$ admits a natural flat connection. This will be needed to guarantee the existence of solutions of the associated equation of flat sections.

  Weinstein \cite{weinstein} defined a natural  flat connection along a Lagrangian foliation using symplectic duality. This will be needed to define a canonical flat connection on $N$ .

Let us recall here Weinstein's construction:
The classical Bott connection (\cite{Bott})  can be defined as a partial connection associated to a foliation $\mathcal P$. This allows to define the covariant derivative of vector fields which belong to the normal bundle of the foliation with respect to vector fields of the distribution.

  Given $X$, a vector field of the polarization, we denote by $\overline Y$ a vector field in the normal bundle $\mathcal N$.  Let us denote by $p:TM\longrightarrow \mathcal N$ the projection on the normal bundle. The Bott connection is defined as $\nabla^B_X(\overline Y)=p([X,Y])$ where $Y$ is any vector field on $TM$ which projects to $\overline Y$.

 This connection is flat. We now use the fact that symplectic duality establishes a natural isomorphism between the tangent and cotangent bundles of $M$ to induce a flat connection on $\SP$ (given a Lagrangian leaf  of the foliation the normal bundle to the leaf can be easily identified with the cotangent bundle of the leaf). In this way the bundle $N$ acquires a canonical flat connection and so it does the bundle $N^{1/2}$. Let us call this connection $\triangle^B$.

The initial connection on the flat bundle $\nabla$ together with the new connection $\triangle^B$ can be used to define a partial connection $\nabla'$ on the bundle $L\otimes N^{1/2}$. We now consider the equation,
$$\nabla'_v s=0, v\in \SP,$$

\noindent where $s$ is a section of the bundle $L\otimes N^{1/2}$. The flatness of $\triangle^B$ and the Lagrangian character of the polarization guarantee that this equation has a local solution.

As observed by \'{S}niaticky in \cite{sniatpaper}, if the leaves have non-trivial fundamental group then there exist no global flat section along all the leaves of the polarization. Thus, it makes sense to consider the sheaf of flat sections and consider as a first candidate for Geometric Quantization the cohomology groups associated to this sheaf.
Let us formalize this idea: We denote by
$\SpS$ be the sheaf of sections of the line bundle $L\otimes N^{1/2}$; and by $\SJ$  the sheaf of flat sections of the bundle $L\otimes N^{1/2}$ along the real polarization $\SP$. Denote by $H^i(M^{2n}, \SJ)$ the $i$-th sheaf cohomology group associated to the sheaf $\SJ$. We define Geometric Quantization using these cohomology groups,
\begin{definition}
The Geometric Quantization space of $(M, \omega)$ with respect to $\SP$ is defined as,
$$ \SQ(M^{2n}, \SP)= \bigoplus_{i \in \N} H^i(M^{2n},\SJ).$$
\end{definition}

\begin{remark}
This cohomology with coefficients in the sheaf of flat sections admits computations \`{a} la \v{C}ech  and thus, a priori, $H^i(M^{2n},\SJ)=0$ for $i>2n$.  Indeed, as we will see, this sheaf admits a fine resolution and we may even apply a computation twisting the De Rham complex that computes foliated cohomology yielding $H^i(M^{2n},\SJ)=0$ for $i>n$.
\end{remark}

\vspace{5mm}

{\bf{The Kostant complex}}

\vspace{5mm}

Let  $\Omega_{\SP}^i$ denote the associated sheaves of sections of the vector bundles $\bigwedge^i \SP$, i.e.
$$\Omega_{\SP}^i(U) = \Gamma(U, \bigwedge^i \SP). $$ Let $\SC$ be the sheaf of complex-valued functions that are locally constant along $\SP$. Then, we consider the natural (fine) resolution
$$ 0 \to \SC \stackrel{i}{\to} \Omega_{\SP}^0 \stackrel{d_{\SP}}{\to}  \Omega_{\SP}^1 \stackrel{d_{\SP}}{\to} \Omega_{\SP}^1 \stackrel{d_{\SP}}{\to} \Omega_{\SP}^2 \stackrel{d_{\SP}}{\to} \cdots ,$$
The differential operator $d_{\SP}$ is the restriction of the exterior differential along the  directions of the distribution. This is the standard resolution used to compute the foliated (or tangential) cohomology of the foliation $\SP$ (see for instance \cite{elkacimi}).

We can now use this fine resolution of $\SC$ to construct
 a fine resolution of the sheaf $\SJ$. We just need to \lq\lq twist" the previous resolution with the sheaf $\SJ$. It produces the following exact sequence,
$$0 \to  \SJ \stackrel{i}{\to} \SpS \stackrel{\nabla'_{\SP}}{\to} \SpS \otimes \Omega^1_\SP  \stackrel{\nabla'_{\SP}}{\to} \SpS \otimes \Omega^2_\SP \to \cdots $$

This is called the Kostant complex. The cohomology of this complex computes exactly $H^i(M, \SP)$ and therefore computes Geometric Quantization.

In the next sections, we will develop some tools and techniques to compute the cohomology of this complex.

\section{Geometric Quantization computation kit}\label{sec:functorial}
In this Section we provide some algebraic tools to compute the Geometric Quantization associated to a real polarization.

\subsection{A Mayer-Vietoris sequence}
The classical Mayer-Vietoris theorem, proved for singular cohomology, also works for the cohomology of any sheaf over quite general topological spaces (see \cite{Br97}, pg. 94). For the sake of completeness we are going to provide a proof for our sheaf $\SJ$.
%

Take $U$ and $V$ a pair of open sets covering $M$. Then there is a sequence of inclusions
$$ M \leftarrow U \sqcup V \leftleftarrows U \cap V.$$

This induces a sequence as follows
$$\xymatrix{\SpS \otimes \Omega_{\SP}^*(M)\ar[r]^-r & (\SpS \otimes \Omega_{\SP}^*(U)) \oplus  (\SpS \otimes \Omega_{\SP}^*(V))
\ar@<+.7ex>[r]^-{r_0}\ar@<-.7ex>[r]_-{r_1} & \SpS\otimes \Omega_{\SP}^*(U \cap V)}$$
\noindent given by the restriction map $r, r_0$ and $r_1$. Substracting the two morphisms on the right side we get the following sequence,
\begin{equation}
\xymatrix{0 \ar[r] & \SpS \otimes \Omega_{\SP}^*(M) \ar[r]^-{r} &( \SpS \otimes \Omega_{\SP}^*(U)) \oplus ( \SpS \otimes \Omega_{\SP}^*(V)) \ar[r]^-{r_0-r_1}& \SpS \otimes \Omega_{\SP}^*(U \cap V) \ar[r] & 0} \label{eq:mayer}
\end{equation}
We have the following,
\begin{theorem}
The sequence \eqref{eq:mayer} is exact.
\end{theorem}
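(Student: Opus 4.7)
The plan is to establish exactness at each of the three positions separately, following the standard sheaf-theoretic template for Mayer--Vietoris. The key structural fact I will lean on is that $\SpS \otimes \Omega_{\SP}^*$ is a sheaf of smooth sections of a vector bundle over $M$, hence a fine sheaf, and therefore admits partitions of unity as multipliers.

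First I would dispatch the injectivity of $r$: any $s \in \SpS \otimes \Omega_{\SP}^*(M)$ with $s|_U = 0$ and $s|_V = 0$ must vanish on $U \cup V = M$, which is immediate from the sheaf axioms. Next, for exactness at the middle position, the inclusion $\operatorname{im}(r) \subseteq \ker(r_0 - r_1)$ is trivial, and the reverse inclusion is precisely the gluing axiom applied to a matched pair $(s_U, s_V)$ with $s_U|_{U \cap V} = s_V|_{U \cap V}$: such a pair glues uniquely to a global section $s$ with $r(s) = (s_U, s_V)$.

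The substantive step is surjectivity of $r_0 - r_1$. I would choose a smooth partition of unity $\{\rho_U, \rho_V\}$ subordinate to $\{U, V\}$ (so $\operatorname{supp}(\rho_U) \subset U$ and $\operatorname{supp}(\rho_V) \subset V$) and, given $\sigma \in \SpS \otimes \Omega_{\SP}^*(U \cap V)$, build $s_U \in \SpS \otimes \Omega_{\SP}^*(U)$ by declaring $s_U = \rho_V \cdot \sigma$ on $U \cap V$ and $s_U = 0$ on $U \setminus \operatorname{supp}(\rho_V)$. The two partial definitions agree on their overlap (both vanish) and together cover $U$ because $\operatorname{supp}(\rho_V)$ is closed in $M$, so they assemble into a genuine smooth section on $U$. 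Setting $s_V = -\rho_U \cdot \sigma$ extended symmetrically by zero on $V$, a one-line calculation gives $(r_0 - r_1)(s_U, s_V) = (\rho_U + \rho_V)\sigma = \sigma$.

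The only subtlety I anticipate is checking that multiplication by the cut-off functions is a legitimate operation on the tensor sheaf $\SpS \otimes \Omega_{\SP}^*$; this is automatic because both tensor factors are $C^\infty(M)$-modules and a smooth scalar acts diagonally. Nothing here is truly hard, the argument being essentially the de Rham Mayer--Vietoris transcribed to our coefficient bundle, and as a bonus the restriction maps $r, r_0, r_1$ visibly commute with $\nabla'_{\SP}$, so the short exact sequence is in fact a short exact sequence of complexes, from which the long exact Mayer--Vietoris sequence for Geometric Quantization cohomology will follow in the next subsection.
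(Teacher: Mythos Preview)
Your proof is correct and follows essentially the same approach as the paper: both establish surjectivity of $r_0 - r_1$ by multiplying a given section on $U \cap V$ by the members of a partition of unity subordinate to $\{U,V\}$ and extending by zero. You supply more detail than the paper does (the extension-by-zero argument, the $C^\infty$-module structure), but the underlying argument is identical.
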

\begin{proof}
The injectivity of the morphism $r$ is obvious, and so it is the exactness at $(\SpS \otimes\Omega_{\SP}^*(U))\oplus  (\SpS \otimes \Omega_{\SP}^*(V))$. To check the surjectivity of $r_0-r_1$ we take a partition of the unity relative to the covering $U,V$ given by functions $\chi_U$ and $\chi_V$. Now for any $\alpha \in \SpS \otimes \Omega_{\SP}^*(U \cap V)$, we define
\begin{eqnarray}
\beta_U & = & \, \, \, \chi_V \cdot \alpha \in  \SpS \otimes \Omega_{\SP}^*(U), \label{eq:extU} \\
\beta_V & = & -\chi_U \cdot \alpha \in \SpS \otimes \Omega_{\SP}^*(V). \label{eq:extV}
\end{eqnarray}
We immediately obtain that $r_0(\beta_U)-r_1(\beta_V)= \alpha$.
\end{proof}

So after playing this game at all degrees,  we obtain the following commutative exact diagram,
\vspace{5mm}
$$
\xymatrix{
& 0 \ar[d] & 0 \ar[d] & 0 \ar[d]  & \\
0 \ar[r] & \SpS \otimes \Omega_{\SP}^0(M) \ar[r]^-r \ar[d]_{\nabla_\SP} & (\SpS \otimes \Omega_{\SP}^0(U))\oplus  (\SpS \otimes \Omega_{\SP}^0(V) )\ar[r]^-{r_0-r_1} \ar[d]_{\nabla_\SP} &  \SpS \otimes \Omega_{\SP}^0(U \cap V) \ar[r] 0 \ar[d]_{\nabla_\SP} & 0 \\
0 \ar[r] & \SpS \otimes \Omega_{\SP}^1(M) \ar[r]^-r \ar[d]_{\nabla_\SP} & (\SpS \otimes \Omega_{\SP}^1(U))\oplus  (\SpS \otimes \Omega_{\SP}^1(V) )\ar[r]^-{r_0-r_1} \ar[d]_{\nabla_\SP} &  \SpS \otimes \Omega_{\SP}^1(U \cap V) \ar[r] 0 \ar[d]_{\nabla_\SP} & 0 \\
& \vdots & \vdots & \vdots
}
$$
\vspace{5mm}

Observe that vertically we have the Kostant complex that computes cohomology with coefficients in a sheaf.
We apply the snake's Lemma \cite{atiyah} to prove the following,

\begin{corollary} \label{coro:Mayer}
The following sequence is exact \newline
\begin{equation}
\xymatrix{
0 \ar[r] & H^0(M, \SJ) \ar[r] & H^0(U, \SJ)\oplus H^0(V, \SJ) \ar[r] & H^0(U \cap V, \SJ) \ar[lld]^{\delta} \\
& H^1(M, \SJ) \ar[r] & H^1(U, \SJ)\oplus H^1(V, \SJ) \ar[r] & H^1(U \cap V, \SJ) \ar[lld]^{\delta}   \\
& H^2(M,\SJ) }
\end{equation} \label{eq:MV}
\end{corollary}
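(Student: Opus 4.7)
The plan is straightforward: the previous theorem furnishes a short exact sequence of cochain complexes
\[
0 \to \SpS \otimes \Omega_{\SP}^*(M) \xrightarrow{r} (\SpS \otimes \Omega_{\SP}^*(U)) \oplus (\SpS \otimes \Omega_{\SP}^*(V)) \xrightarrow{r_0 - r_1} \SpS \otimes \Omega_{\SP}^*(U \cap V) \to 0,
\]
with the differential $\nabla'_{\SP}$ commuting with all restriction maps (this commutativity is visible in the big diagram preceding the corollary, since $\nabla'_{\SP}$ is a local operator). Applying the standard zig-zag/snake lemma argument to this short exact sequence of complexes yields a long exact sequence relating the cohomologies of the three columns. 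So the only task is to identify each column's cohomology with the corresponding sheaf cohomology of $\SJ$.

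For that identification I would invoke the fact, established in the previous section, that the Kostant complex
\[
0 \to \SJ \to \SpS \xrightarrow{\nabla'_{\SP}} \SpS \otimes \Omega^1_{\SP} \xrightarrow{\nabla'_{\SP}} \SpS \otimes \Omega^2_{\SP} \to \cdots
\]
is a \emph{fine} resolution of $\SJ$ (fineness follows since the $\Omega^i_{\SP}$ are sheaves of sections of vector bundles and thus admit partitions of unity). Consequently, for any open set $W \subset M$, the cohomology of the complex $\SpS \otimes \Omega_{\SP}^*(W)$ computes $H^*(W, \SJ)$. Applying this to $W = M$, $W = U$, $W = V$, $W = U \cap V$, and noting that sheaf cohomology on the disjoint union $U \sqcup V$ splits as a direct sum, the three columns compute respectively $H^*(M, \SJ)$, $H^*(U, \SJ) \oplus H^*(V, \SJ)$, and $H^*(U \cap V, \SJ)$. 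Substituting these into the long exact sequence produced by the snake lemma gives exactly the sequence in the statement of Corollary \ref{coro:Mayer}.

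The connecting morphism $\delta$ would be described explicitly using the partition of unity $\{\chi_U, \chi_V\}$ from the proof of the preceding theorem: given a class $[\alpha] \in H^k(U \cap V, \SJ)$ represented by a $\nabla'_{\SP}$-closed form $\alpha \in \SpS \otimes \Omega_{\SP}^k(U \cap V)$, lift it to $(\chi_V \cdot \alpha, -\chi_U \cdot \alpha) \in \SpS \otimes \Omega_{\SP}^k(U) \oplus \SpS \otimes \Omega_{\SP}^k(V)$, apply $\nabla'_{\SP}$ to each component, and observe that the resulting pair agrees on $U \cap V$ (since $\nabla'_{\SP} \alpha = 0$ there) and so glues to a global section of $\SpS \otimes \Omega_{\SP}^{k+1}(M)$ representing $\delta[\alpha]$.

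There is no real obstacle: the argument is entirely formal once one has (a) the short exact sequence of complexes, which is the content of the preceding theorem, and (b) the fact that the Kostant complex is a fine resolution on each open, which was recorded in Section 2. The only point requiring a brief sanity check is that the restriction maps are genuinely chain maps for $\nabla'_{\SP}$, which is immediate because $\nabla'_{\SP}$ is defined pointwise and multiplication by the smooth cutoffs $\chi_U, \chi_V$ lands in the correct spaces of sections.
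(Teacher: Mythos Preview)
Your proposal is correct and follows essentially the same route as the paper: the paper simply invokes the snake lemma on the short exact sequence of Kostant complexes established in the preceding theorem, and then spells out the connecting homomorphism $\delta$ via the partition-of-unity lift $(\chi_V\alpha,-\chi_U\alpha)$ exactly as you do. If anything, you are slightly more explicit in justifying why the column cohomologies compute $H^*(-,\SJ)$ (via fineness of the Kostant resolution), which the paper leaves implicit from Section~2.
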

We recall how to define the connecting operator $\delta$ because it will be needed later. We take a class $a \in H^i(U \cap V, \SJ)$, represented by a form $\alpha \in \SpS \otimes \Omega_{\SP}^i(U \cap V)$. From the previous diagram we know that there are forms  $(\beta_U, \beta_V) \in (\SpS \otimes \Omega_{\SP}^i(U))\oplus  (\SpS \otimes \Omega_{\SP}^i(V))$, defined via the formulae \eqref{eq:extU} and \eqref{eq:extV}. Then we take the image under the vertical morphism to obtain $(\nabla_\SP \beta_U, \nabla_\SP \beta_V) \in (\SpS \otimes \Omega_{\SP}^{i+1}(U))\oplus  (\SpS \otimes \Omega_{\SP}^{i+1}(V))$. Since $a \in H^i(U \cap V, \SJ)$,  commutativity of the diagram  yields $(r_0-r_1)(\nabla_\SP \beta_U, \nabla_\SP \beta_V)=0$. Hence there is a form $\gamma \in \SpS \otimes \Omega_{\SP}^{i+1}(M)$ such that $r(\gamma)= (\nabla_\SP \beta_U, \nabla_\SP \beta_V)$. Again the commutativity of the diagram yields $\nabla_\SP (\gamma)=0$ and thus it defines a closed form whose class in $H^{i+1}(M, \SJ)$  satisfies by construction $\delta([a])=[\gamma]$ (and this is how the connecting operator is defined). It is easy to check that this definition does not depend on the form representing the class $[a]$.

\subsection{Mayer-Vietoris for closed domains} \label{subsec:closed} The space of sections of the sheaf $\SJ$ can be defined for closed sets $C \subset M$. Take the directed system of sections $\SJ(A_i)$ of open sets $A_i$ containing the closed set $C$. It is directed by the restriction morphism. Then, we define the space of sections over $C$ as
$$ \SJ(C)= \lim_{\longrightarrow} A_i,$$
the direct limit of the system. The proof of the previous Mayer--Vietoris result can be easily adapted to the case of closed sets (see \cite{BT82} for all the details). It is stated as follows.
Take $C_0$ and $C_1$ a pair of closed sets covering $M$. Then there exists, as in the previous Subsection, a sequence of inclusions
$$ M \leftarrow C_0 \sqcup C_1 \leftleftarrows C_0 \cap C_1.$$
Again, we obtain
$$\xymatrix{\SpS \otimes \Omega_{\SP}^*(M)\ar[r]^-r & (\SpS \otimes \Omega_{\SP}^*(C_0)) \oplus  (\SpS \otimes \Omega_{\SP}^*(C_1))
\ar@<+.7ex>[r]^-{r_0}\ar@<-.7ex>[r]_-{r_1} & \SpS\otimes \Omega_{\SP}^*(C_0 \cap C_1)}$$
\noindent given by the restriction map $r, r_0$ and $r_1$. Substracting the two morphisms on the right side we get the following sequence,
\begin{equation}
\xymatrix{0 \ar[r] & \SpS \otimes \Omega_{\SP}^*(M) \ar[r]^-{r} &( \SpS \otimes \Omega_{\SP}^*(C_0)) \oplus ( \SpS \otimes \Omega_{\SP}^*(C_1)) \ar[r]^-{r_0-r_1}& \SpS \otimes \Omega_{\SP}^*(C_0 \cap C_1) \ar[r] & 0} \label{eq:mayer2}
\end{equation}
We have the following,
\begin{theorem}
The sequence \eqref{eq:mayer2} is exact and thus it induces a long exact sequence in cohomology.
\end{theorem}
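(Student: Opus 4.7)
The plan is to deduce exactness of \eqref{eq:mayer2} from the open version already proved in Subsection 3.1, by working with representatives of sections over open neighborhoods and then passing to the direct limit that defines $\SJ(C)$.

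The key preparatory step is a cofinality lemma: given any open $W$ with $C_0 \cap C_1 \subset W$, one can find opens $U \supset C_0$ and $V \supset C_1$ with $U \cup V = M$ and $U \cap V = W$. Since a smooth manifold is normal and the closed sets $C_0 \setminus W$ and $C_1 \setminus W$ are disjoint (their intersection lies in $(C_0 \cap C_1)\setminus W = \emptyset$), they can be separated by disjoint opens $U_0, V_0$, and then $U := U_0 \cup W$, $V := V_0 \cup W$ work: a short set-theoretic computation gives $U \cap V = (U_0 \cap V_0) \cup (U_0 \cap W) \cup (V_0 \cap W) \cup W = W$, and $U \cup V \supset C_0 \cup C_1 = M$. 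This lemma is what allows the partition-of-unity argument of the open case to be redeployed at the closed-set level.

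With this in hand I would check exactness degree by degree. Injectivity of $r$ is immediate: a representative $\gamma$ on an open neighborhood of $M$ whose restrictions to representatives over neighborhoods of $C_0$ and $C_1$ vanish must already vanish on an open cover of $C_0 \cup C_1 = M$, hence represents the zero class. For exactness in the middle, if $\beta_0$ on some neighborhood of $C_0$ and $\beta_1$ on some neighborhood of $C_1$ have $r_0(\beta_0) = r_1(\beta_1)$ in $\SpS \otimes \Omega_{\SP}^*(C_0 \cap C_1)$, then shrinking representatives we may assume they agree on a common open $W \supset C_0 \cap C_1$, and the cofinality lemma supplies $U \supset C_0$, $V \supset C_1$ with $U \cap V = W$ and $U \cup V = M$, so $\beta_0$ and $\beta_1$ glue to a representative on $M$. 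For surjectivity of $r_0 - r_1$, pick a representative $\alpha$ of a class in $\SpS \otimes \Omega_{\SP}^*(C_0 \cap C_1)$ defined on some open $W$, apply the cofinality lemma to obtain $(U, V)$, and then the formulae \eqref{eq:extU}--\eqref{eq:extV} with a partition of unity $\{\chi_U, \chi_V\}$ subordinate to $\{U, V\}$ produce $\beta_U$ on $U$ and $\beta_V$ on $V$ whose images in $\SpS \otimes \Omega_{\SP}^*(C_0)$ and $\SpS \otimes \Omega_{\SP}^*(C_1)$ satisfy $r_0(\beta_U) - r_1(\beta_V) = \alpha$.

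Once the short exact sequence \eqref{eq:mayer2} is established in every degree, the commutative diagram of Kostant complexes for $M$, $C_0 \sqcup C_1$ and $C_0 \cap C_1$ has exact rows, and the snake-lemma chase carried out for Corollary~\ref{coro:Mayer} transfers verbatim to produce the long exact sequence in cohomology, with connecting homomorphism $\delta$ defined exactly as before. The main technical point I expect to have to nail down carefully is the cofinality lemma: everything else is routine once one knows that an arbitrary open neighborhood of $C_0 \cap C_1$ can be realized as $U \cap V$ with $U \cup V = M$, which is precisely what is needed to make the partition-of-unity trick of the open case legal at the level of representatives.
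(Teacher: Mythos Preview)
Your argument is correct and in fact considerably more detailed than what the paper itself provides: the paper gives no proof for this theorem, merely remarking beforehand that the open-set argument ``can be easily adapted to the case of closed sets (see \cite{BT82} for all the details).'' Your direct-limit approach via the cofinality lemma is exactly the standard way to make this adaptation rigorous, and it lines up with how Bott--Tu handle the analogous situation.

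One small refinement worth recording: in the middle-exactness step you need not only $U \cap V = W$ and $U \cup V = M$, but also $U \subset A_0$ and $V \subset A_1$, where $A_0 \supset C_0$ and $A_1 \supset C_1$ are the open neighborhoods on which your chosen representatives of $\beta_0$ and $\beta_1$ actually live; otherwise the restriction $\beta_0|_U$ is not defined and the gluing cannot be performed. This is easily arranged: after separating $C_0 \setminus W$ and $C_1 \setminus W$ by disjoint opens $U_0, V_0$, replace them by $U_0 \cap A_0$ and $V_0 \cap A_1$ before forming $U = U_0 \cup W$ and $V = V_0 \cup W$. Since you already chose $W \subset A_0 \cap A_1$, the inclusions $U \subset A_0$ and $V \subset A_1$ then hold, and the rest of your computation goes through unchanged.
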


\subsection{A K\"unneth formula}
The classical K\"unneth formula also holds with great generality for the cohomology of a sheaf (\cite{Br97}). It works for the Geometric Quantization scheme in a generalized form. Let $(M_1, \SP_1)$ and $(M_2, \SP_2)$ be a pair of symplectic manifolds endowed with Lagrangian foliations. The natural cartesian product for the foliations is Lagrangian with respect to the product symplectic structure. The induced sheaf of flat sections associated to the product foliation will be denoted $\SJ_{12}$. Note that we use the pre-quantum bundle and metaplectic corrections defined as pull-backs and tensor products of the ones defined over $M_1$ and $M_2$

There is a natural morphism,
\begin{equation}
\Psi: H^*(M_1, \SJ_1) \otimes H^*(M_2, \SJ_2) \to H^*(M_1 \times M_2, \SJ_{12})  \label{eq:Ku}
\end{equation}
induced by pull-back of the classes through the natural projections. Furthermore we can prove the following K\"{u}nneth formulae,
\begin{theorem}[K\"{u}nneth formula for Geometric Quantization] \label{thm:Kun}
There is an isomorphism
$$ H^n(M_1 \times M_2,  \SJ_{12}) \cong \bigoplus_{p+q=n} H^p(M_1, \SJ_1) \otimes H^q(M_2, \SJ_2), $$
\noindent whenever the Geometric Quantization associated to $(M_1,\SJ_1)$ has finite dimension, $M_1$ is compact  and $M_2$ admits a \emph{good} covering.

\end{theorem}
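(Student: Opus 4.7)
The plan is to proceed by induction on the cardinality $N$ of a good covering $\{V_1,\dots,V_N\}$ of $M_2$, comparing the Mayer--Vietoris sequence from Corollary~\ref{coro:Mayer} on $M_1\times M_2$ with the ordinary Mayer--Vietoris sequence on $M_2$ tensored with the graded vector space $H^*(M_1,\SJ_1)$. The K\"unneth morphism $\Psi$ of \eqref{eq:Ku} provides the comparison ladder, and the Five Lemma drives the induction.

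For the base case $N=1$, let $V\subset M_2$ be a single element of the good covering. On such a $V$ the polarization $\SP_2$ restricts to a trivial foliation, flat sections of $L_2\otimes N_2^{1/2}$ exist globally on $V$, and $H^{*}(V,\SJ_2)$ is explicitly computable in terms of these flat sections. On $M_1\times V$ the product structure gives a natural isomorphism of complexes
\begin{equation*}
\SpS_{12}\otimes\Omega^k_{\SP_{12}}(M_1\times V)\;\simeq\;\bigoplus_{p+q=k}\bigl(\SpS_1\otimes\Omega^p_{\SP_1}(M_1)\bigr)\otimes\bigl(\SpS_2\otimes\Omega^q_{\SP_2}(V)\bigr),
\end{equation*}
under which $\nabla'_{\SP_{12}}$ splits as $\nabla'_{\SP_1}\otimes\Id+(-1)^p\Id\otimes\nabla'_{\SP_2}$. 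The algebraic K\"unneth formula for a tensor product of complexes of $\C$-vector spaces then yields the isomorphism in the base case.

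For the inductive step, assume the result for every factor admitting a good covering with fewer than $N$ opens, and set $U=V_1\cup\cdots\cup V_{N-1}$ and $W=V_N$. I would apply Corollary~\ref{coro:Mayer} to the cover $\{M_1\times U,\,M_1\times W\}$ of $M_1\times M_2$, and in parallel apply Mayer--Vietoris on $M_2$ to $\{U,W\}$. Tensoring the latter sequence with the finite-dimensional graded $\C$-vector space $H^*(M_1,\SJ_1)$ preserves exactness, since every vector space over $\C$ is flat. Naturality of $\Psi$ under pullback along the inclusions produces a commutative ladder between the two long exact sequences. By the inductive hypothesis the vertical arrows at $U$, $W$, and $U\cap W$ are isomorphisms; here one uses that $\{V_i\cap W\}_{i=1}^{N-1}$ is itself a good covering of $U\cap W$. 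The Five Lemma then delivers the isomorphism for $M_2=U\cup W$.

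The hardest step will be the base case. The naive decomposition above uses an algebraic tensor product of spaces of smooth sections, but these are Fr\'echet spaces on which the algebraic and topological tensor products typically disagree. Compactness of $M_1$ together with finite-dimensionality of $H^*(M_1,\SJ_1)$ is precisely what lets one reduce to a genuinely algebraic tensor product: one chooses finitely many $\nabla'_{\SP_1}$-closed forms representing a basis of $H^*(M_1,\SJ_1)$ and carries out the decomposition fibrewise over $V$. A secondary technical point is verifying strict commutativity of the Mayer--Vietoris ladder, which requires that $\Psi$ intertwine the connecting homomorphism $\delta$ of Corollary~\ref{coro:Mayer} with the corresponding $\delta$ on $M_2$ up to the standard K\"unneth sign; this follows from the explicit formulas \eqref{eq:extU}--\eqref{eq:extV} once one chooses a partition of unity on $M_2$ and pulls it back to $M_1\times M_2$.
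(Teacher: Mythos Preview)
Your inductive framework is exactly the paper's: Mayer--Vietoris on the second factor, tensor the resulting long exact sequence with $H^*(M_1,\SJ_1)$, compare via $\Psi$, and apply the Five Lemma. The verification that $\Psi$ commutes with the connecting homomorphism $\delta$ is also handled in the paper just as you describe, by pulling a partition of unity back from $M_2$.

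The gap is in your base case. The displayed ``natural isomorphism of complexes'' is false: $C^\infty$ sections over a product are a \emph{completed} tensor product, not an algebraic one, and the algebraic K\"unneth formula for complexes of $\C$-vector spaces does not apply to completed tensor products without further argument. You acknowledge this, but your proposed fix---choose a finite basis of $H^*(M_1,\SJ_1)$ and ``decompose fibrewise over $V$''---is not a proof. The difficulty is surjectivity of $\Psi$: given a closed section $\alpha$ on $M_1\times V$, restricting to slices $M_1\times\{v\}$ gives a family of cohomology classes in $H^*(M_1,\SJ_1)$, but you must show this family is \emph{smooth} in $v$ and that a smooth preimage exists. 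Quotient topologies on cohomology are delicate, and finite-dimensionality alone does not immediately give smoothness of the coefficient functions.

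The paper handles this in Proposition~\ref{prop:coba} via two nontrivial steps you are missing. First, Lemma~\ref{lem:decomposing} uses an ODE in the $V$-direction to replace $\alpha$ by a cohomologous form with no $V$-leafwise component, so that the slice restrictions determine $\alpha$ completely. Second, the resulting one-parameter family $\tilde\alpha_t$ of closed forms on $M_1$ is extended periodically, expanded in Fourier series, and the decay estimates of Theorem~\ref{thm:Graf} (together with compactness of $M_1$) are used to show that the induced Fourier series of \emph{cohomology classes} converges to a smooth map $[-1,1]\to H^n(M_1,\SJ_1)$. This is where compactness of $M_1$ and finite-dimensionality of $H^*(M_1,\SJ_1)$ actually enter, and it is considerably more than a fibrewise basis argument.
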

The isomorphism is defined by the inverse of the map $\Psi$ above and the core of the proof will be to show its existence.

\begin{remark} In this article the condition of  good covering will be the one as stated  in \cite{Br97}. This condition is automatically fulfilled in particular if $M_2$ has finite topology or it is compact. The compactness condition on $M_1$ can indeed  be relaxed to the following one:  $M_1$ is a submanifold of a compact manifold. What we will do is to establish the K\"{u}nneth formula for closed balls and then use compactness.

\end{remark}
\begin{remark}

In \cite{Br97} a more general formula is given, where a torsion of a complex is present. Whenever the complex is torsionless, we obtain a clean K\"unneth formula. One way to look at this result is as a torsionless case of a generalized K\"unneth formula \`{a} la Grothendieck \cite{grothendieck}.

Many authors have studied more general conditions under which a K\"unneth formula holds for sheaf cohomology with infinite dimension. See for instance the works of Kaup \cite{kaup} and Grothendieck \cite{grothendieck}  where a K\"unneth formula is given in terms of completion of the tensor product:

$$ H^n(M_1 \times M_2,  \SJ_{12}) = \bigoplus_{p+q=n} H^p(M_1, \SJ_1) \hat{\otimes} H^q(M_2, \SJ_2). $$

This approach using nuclear spaces has also been adopted by Bertelson in \cite{bertelsonkunneth} for foliated cohomology. In particular, Bertelson obtains a similar result to our K\"{u}nneth formula  for foliated cohomology under similar hypothesis (imposing already compactness of one of the factor manifolds).  This foliated cohomology result can be reinterpreted as the \lq\lq zero limit\rq\rq  case of Geometric Quantization.
\end{remark}

The proof follows very closely the de Rham cohomology case. We start by proving the following proposition which automatically implies the K\"{u}nneth formula in  case one of the factors is an open set with the property that  the leaves restricted to the domain $U$ are contractible and the leaf space is also contractible.
We will call these open sets \emph{cotangent balls}. Its compactification, whenever the leaves and the leaf space are still contractible, will be called a \emph{closed cotangent ball}.\footnote{ Notice that for this we need to extend the definitions of  Geometric Quantization to that of  manifolds with boundary. For classical references see \cite{gilkey} and \cite{tianzhang}. However, it follows the definitions provided in Subsection \ref{subsec:closed}.}

\begin{proposition}  \label{prop:coba}
Let $U$ be a closed cotangent ball and $M$ a compact manifold with finite dimensional Geometric Quantization. The following equality holds,
$$ H^n(M \times U,  \SJ_{10}) = H^n(M, \SJ) \otimes H^0(U, \SJ_0), $$
\end{proposition}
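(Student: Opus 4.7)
The plan is to reduce to a Poincar\'e-type lemma for the Kostant complex on a closed cotangent ball and then run a double-complex argument. First I would observe that since the leaves of $\SP_0$ on $U$ are contractible and the leaf space is contractible, the bundle $L_U \otimes N_U^{1/2}$ admits a global frame of flat sections: on each contractible leaf a flat section is determined by its value at a single point, and the contractibility of the leaf space assembles these pointwise choices into a global flat frame. In particular this exhibits $H^0(U, \SJ_0)$ as the space $C^\infty$ of the leaf space (tensored with the frame), and identifies the Kostant complex on $U$ with the foliated de Rham complex of $\SP_0$ tensored over the leaf space with this one-dimensional free module.

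With that identification in hand I would prove the Poincar\'e lemma
$$H^i(U, \SJ_0) = 0 \quad \text{for } i > 0.$$
After trivializing via the flat frame this reduces to the vanishing of the $\SP_0$-foliated de Rham cohomology of $U$ in positive degree, which follows in two steps: a fibrewise Poincar\'e lemma along the contractible leaves collapses closed foliated forms to sections constant along leaves, and then the contractibility of the leaf space takes care of the rest.

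Next I would organize the Kostant complex on $M \times U$ as the total complex of the double complex
$$ K^{p,q} \;=\; \SpS_{12} \otimes \Omega_{\SP_1}^{\,p}(M) \otimes_{C^\infty} \Omega_{\SP_2}^{\,q}(U), $$
with horizontal differential $\nabla'_{\SP_1}$ and vertical differential $\nabla'_{\SP_2}$. Because the prequantum and metaplectic bundles on $M\times U$ are external tensor products of those on the factors, and because $U$ carries a global flat frame by the first step, $\SpS_{12}$ splits as $\SpS_1 \boxtimes \SpS_0$ and this decomposition is genuine (no completion is needed). Computing the cohomology of each column first, the Poincar\'e lemma of the previous paragraph gives column cohomology concentrated in bidegree $(p,0)$ and equal to $\SpS_1 \otimes \Omega_{\SP_1}^{\,p}(M) \otimes H^0(U, \SJ_0)$; the $E_1$-page of the associated spectral sequence thus has only one nonzero row, the sequence degenerates, and the surviving horizontal differential $\nabla'_{\SP_1} \otimes \Id$ produces cohomology equal to $H^n(M, \SJ_1) \otimes H^0(U, \SJ_0)$. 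The resulting isomorphism coincides with the one induced by $\Psi$ in \eqref{eq:Ku}.

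The main obstacle I expect is justifying the interchange of taking $\SP_2$-cohomology with tensoring over the infinite-dimensional space $\Omega_{\SP_1}^{\,p}(M)$, since in general such an interchange only works up to completed tensor products (as in the work of Kaup and Bertelson cited earlier). The reason this difficulty can be sidestepped here is precisely that $U$ is a closed cotangent ball: the global flat frame turns the whole $U$-factor of the double complex into a classical foliated de Rham complex tensored with a free $C^\infty$-module, so the homotopy operators witnessing the foliated Poincar\'e lemma are $\Omega^p_{\SP_1}(M)$-linear by construction and descend without analytic completion. Compactness of $M$ and finite-dimensionality of $H^*(M,\SJ_1)$ then guarantee that the outcome is the ordinary tensor product appearing in the statement.
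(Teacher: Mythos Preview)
Your spectral-sequence approach is structurally different from the paper's direct argument. The paper first proves a reduction lemma (Lemma~\ref{lem:decomposing}) showing that every closed class has a representative with no $dy$-component; this is essentially your column-wise Poincar\'e lemma, and your observation that the fibrewise homotopy operator is linear over the $M$-factor is correct and handles that step cleanly. The paper then establishes surjectivity of $\Delta$ by Fourier analysis: the reduced form is regarded as a one-parameter family $\tilde\alpha_t$ of closed Kostant forms on $M$, extended periodically, and the decay of the Fourier coefficients $\tilde\alpha(m)$, controlled uniformly in $p\in M$ via compactness of $M$ and Theorem~\ref{thm:Graf}, shows that the class-valued function $t\mapsto[\tilde\alpha_t]\in H^n(M,\SJ_1)$ is smooth and hence lies in the image of $\Delta$.

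The gap in your argument is at the passage from $E_1$ to $E_2$. After the column reduction your $E_1$-page is the complex of smooth maps from the leaf space $B_U$ into the Kostant complex on $M$, and you need its cohomology to equal $C^\infty(B_U,H^n(M,\SJ_1))$. Concretely: given a smooth family $t\mapsto\alpha_t$ of $\nabla_{\SP_1}$-closed forms with each $\alpha_t$ exact, you must produce a \emph{smooth} family of primitives. This does not follow formally from compactness of $M$ and finite-dimensionality of $H^*(M,\SJ_1)$. Finite-dimensionality forces the image of $\nabla_{\SP_1}$ to be closed and the open mapping theorem makes $\nabla_{\SP_1}$ an open surjection onto the exact forms, but open surjections of Fr\'echet spaces need not admit continuous linear sections, and without one a smooth family in the target need not lift smoothly. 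Your homotopy-operator remark applies only to the $\SP_2$-direction; no analogous operator is available for $\nabla_{\SP_1}$. The paper's Fourier estimate is precisely what substitutes for such a section: it bounds each Fourier mode uniformly so that the assembled series converges in $C^\infty$. You would need either to import that estimate or to supply an alternative argument, for instance a topological splitting of the short exact sequence $0\to \im\nabla_{\SP_1}\to \ker\nabla_{\SP_1}\to H^n(M,\SJ_1)\to 0$ together with a smooth-lifting property for the surjection $\SpS\otimes\Omega^{n-1}_{\SP_1}(M)\twoheadrightarrow \im\nabla_{\SP_1}$.
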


%

Before proving this proposition, we need the following lemma,
\begin{lemma}\label{lem:decomposing}
Given an element $\alpha \in (\SpS \otimes \Omega_{\SP}^p)(M\times U)$ which is closed by the Kostant differential $\nabla_{\SP}$, we can always find another element $\alpha'$ in the same cohomology class in the Kostant complex such that $\alpha'\in \SpS(M \times U) \otimes \Omega_{\SP}^{p}(M)$.
\end{lemma}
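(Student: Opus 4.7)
The plan is to use a bigrading argument together with a parametric Poincar\'e-type lemma for the Kostant complex on the closed cotangent ball $U$. Since the polarization on $M \times U$ is the product $\SP_M \times \SP_U$, the tangential exterior algebra bigrades as $\bigwedge^p \SP = \bigoplus_{i+j=p} \bigwedge^i \SP_M \otimes \bigwedge^j \SP_U$, and the Kostant differential splits correspondingly as $\nabla_\SP = \nabla^M + (-1)^i \nabla^U$, where $\nabla^M$ raises the $i$-degree and $\nabla^U$ raises the $j$-degree; crucially $(\nabla^M)^2 = (\nabla^U)^2 = 0$ and the two anticommute (this follows from the compatibility of the metaplectic correction with the product structure and the flatness of the partial connection $\nabla'$ on $L \otimes N^{1/2}$ along $\SP$). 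I would then decompose $\alpha = \sum_{i+j=p} \alpha_{i,j}$ and let $j_0$ be the largest $j$ for which $\alpha_{i,j}$ is nonzero.

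If $j_0 = 0$ we are finished. Otherwise I claim there exists $\beta \in (\SpS \otimes \Omega_{\SP}^{p-1})(M \times U)$ such that $\alpha - \nabla_\SP \beta$ has maximal $j$-degree strictly less than $j_0$; iterating this at most $j_0$ times produces the desired $\alpha'$ of pure bidegree $(p,0)$. To construct $\beta$, observe that the component of $\nabla_\SP \alpha = 0$ in bidegree $(p-j_0, j_0 + 1)$ reads $\nabla^U \alpha_{p-j_0, j_0} = 0$, because the other potential contribution $\nabla^M \alpha_{p-j_0-1, j_0 + 1}$ vanishes by the maximality of $j_0$. Now I would invoke a parametric Poincar\'e-type lemma: on the closed cotangent ball $U$ (leaves of $\SP_U$ contractible, leaf space contractible), the Kostant complex along $\SP_U$ is acyclic in positive degree, and this acyclicity is inherited by the complex with parameters in $M$. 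Applied to $\alpha_{p-j_0, j_0}$, this produces $\beta_{p-j_0, j_0-1}$ with $\nabla^U \beta_{p-j_0, j_0-1} = \alpha_{p-j_0, j_0}$, and then $\alpha - \nabla_\SP \beta_{p-j_0, j_0-1}$ kills the $(p-j_0, j_0)$ component at the cost of a new term in bidegree $(p-j_0+1, j_0-1)$, which has strictly smaller $j$-degree.

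The main obstacle is establishing the parametric Poincar\'e lemma on $U$ with coefficients in the pulled-back prequantum-metaplectic line bundle. Concretely, I would choose coordinates on $U$ adapted to $\SP_U$ so that the flat sections along $\SP_U$ become honest pullbacks from the leaf space, trivialize $L \otimes N^{1/2}$ along $\SP_U$ using $\triangle^B$ and the restricted prequantum connection, and build an explicit homotopy operator $h$ by integration along the rays of a deformation retraction of the contractible leaves (combined with a retraction of the contractible leaf space). Because this homotopy is fiberwise in the $U$-direction and smooth in its parameters, $h\alpha_{p-j_0,j_0}$ is a well-defined global section on $M \times U$ of the correct bidegree. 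With the homotopy in hand, the bigraded descent argument above concludes the proof, and the iteration terminates in finitely many steps since $j_0 \le p$.
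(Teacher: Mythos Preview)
Your argument is correct and follows the same idea as the paper's proof: kill the $\SP_U$-components of $\alpha$ by a parametric Poincar\'e lemma on the closed cotangent ball $U$. The paper carries this out concretely for $\dim U = 2$ by writing $\alpha = \hat\alpha + \beta\wedge dy$ and solving the first-order linear ODE $\partial_y\gamma - i\phi(x,y)\gamma = \beta$ (with smooth dependence on the $M$-parameter) to obtain the primitive in one step, remarking that higher-dimensional $U$ is handled ``just by sophisticating the notation''; your bigrading-plus-iteration with an abstract homotopy operator is precisely that sophistication, and in the one-dimensional leaf case it collapses to the paper's single ODE step (where $\nabla^U$-closedness of the top component is automatic).
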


\begin{proof}
Assume that we are in the case $M \times U$, for a cotangent ball $U \subset \R^2$. The proof can be adapted for higher dimensions of $U$ just by sophisticating the notation. Denote $\SP_M$ the polarization in $M$ and $(L_M, \nabla_M)$ the quantization bundle over $M$. In the same way, the cotangent ball has as quantization bundle the restriction of the quantization bundle on $\R^2$ which we denote by $L_U$ with connection $\nabla_U$. The product connection will be denoted by $\nabla = \nabla_M \boxplus \nabla_U$.

Let us take a system of coordinates over $U$ in which we can trivialize the prequantization bundle in the horizontal directions starting with a parallel section along the vertical axis\footnote{This has been known in the literature of Geometric Quantization as the existence of a \lq\lq trivializing section" for neighbourhoods in the case of regular foliations.}. With respect to this trivialization the connection on the quantization bundle $L_U$ is written,
$$ \nabla_{U}  = d - i \phi(x,y) dy. $$

Now, take an element  $\alpha\in \SpS \otimes \Omega_{\SP}^p(M\times U)$. It can be written as,
\begin{equation}
\alpha= \hat{\alpha} + \beta \wedge dy, \label{eq:forma}
\end{equation}
where $\hat{\alpha} \in  \SpS \otimes \Omega_{\SP}^{p}(M)$ and $\beta \in  \SpS \otimes \Omega_{\SP}^{p-1}(M)$.

Observe that $\alpha$ is closed in the Kostant complex and therefore $\nabla_\SP \alpha=0$. Recall that $U$ is a cotangent ball, therefore
$$ U=  \{ (x, y) \in \R^2 : - \epsilon_1 \leq x \leq \epsilon_2, h_1(x) \leq y \leq h_2(x) \}. $$
Take $h_3(x)= \frac{h_1(x)+h_2(x)}{2}$. Define $\gamma \in \SpS \otimes \Omega_{\SP}^{p-1}(M)$ satisfying the differential equation
$$ \frac{\partial \gamma}{\partial y} - i\phi(x,y) \gamma = \beta, $$
with the initial conditions $\gamma(p, x,h_3(x))= 0$. This is a first order linear ordinary differential equation with \lq\lq parameter" $p\in M$ and fixed initial conditions. Thus it has a unique solution.

 It is simple to check that the following equality holds $\nabla \gamma= \nabla_M \gamma + \beta \wedge dy$. Thus, replacing this expression in (\ref{eq:forma}), we obtain
$$
\alpha= (\hat{\alpha} - \nabla_M \gamma) + \nabla \gamma= \alpha' + \nabla \gamma,
$$
we conclude by observing that $\alpha' = \hat{\alpha} - \nabla_M \gamma \in \SpS(M \times U) \otimes \Omega_{\SP}^{p}(M)$ and $\nabla \gamma$ is obviously exact.
\end{proof}

\begin{figure}[ht]
\includegraphics[scale=0.3]{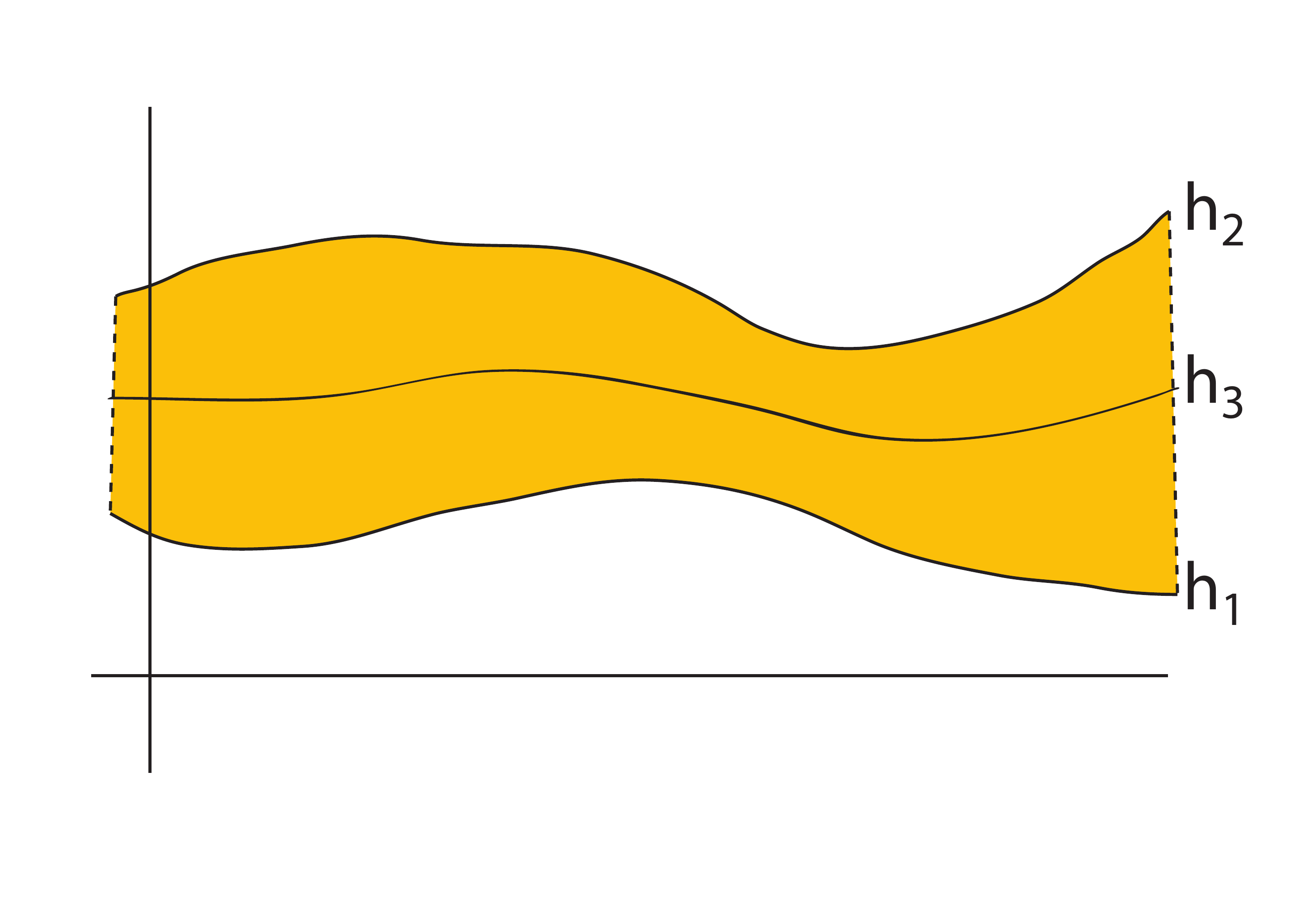}
\label{fig:h_3}
\end{figure}

 We now recall a few facts about orthogonal series of functions in interpolation theory. Let $f: S^1 \to \C$ be a continuous function over the circle. Denote by $\hat{f}: \Z \to \C$ its Fourier coefficients. We have the following relation between the decay of the coefficients and the smoothness of the function.
\begin{theorem}[Theorem 3.2.9 and Proposition 3.2.12 in \cite{Gr09}] \label{thm:Graf}
For any $s \in \Z^+$, the following two statements hold:
\begin{enumerate}
\item If $f$ is a function of $C^s$-class then we have $\lim_{k \to \pm \infty}\frac{|\hat{f}(k)|}{1+ |k|^s}=0$,
\item If $\lim_{k \to \pm \infty}\frac{|\hat{f}(k)|}{1+ |k|^{s+1}}=0$ then we have  $f$ is a function of $C^s$-class.
\end{enumerate}
\end{theorem}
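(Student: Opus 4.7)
The plan is to exploit the standard duality between differentiability of $f$ and decay of $\hat{f}(k)$ at infinity, via integration by parts in one direction and the Weierstrass $M$-test in the other.

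For statement (1), I would proceed by successive integration by parts in $\hat{f}(k)=\int_{S^1} f(x)\,e^{-ikx}\,dx$. Because $f\in C^s(S^1)$, its first $s-1$ derivatives are continuous and periodic, so all boundary terms vanish and one gains a factor $1/(ik)$ at each step, arriving at the identity $\hat{f}(k)=(ik)^{-s}\,\widehat{f^{(s)}}(k)$. Since $f^{(s)}$ is continuous on the compact circle, hence in $L^1(S^1)$, the Riemann--Lebesgue lemma gives $\widehat{f^{(s)}}(k)\to 0$ as $|k|\to\infty$, and dividing by $|k|^s$ (equivalently by $1+|k|^s$) delivers the stated decay.

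For statement (2), I would reverse the logic and reconstruct $f$ and its derivatives directly from the Fourier series. The hypothesised decay on $\hat{f}(k)$, which carries one extra power of $|k|$ relative to what appeared in (1), is designed precisely to force absolute summability of $\sum_{k}|k|^{j}\,|\hat{f}(k)|$ for each $0\le j\le s$. The Weierstrass $M$-test then makes the formal termwise derivatives $g_j(x):=\sum_{k}(ik)^{j}\,\hat{f}(k)\,e^{ikx}$ converge absolutely and uniformly on $S^1$ to continuous functions. A standard induction, using uniform convergence to interchange differentiation and summation, identifies $g_0=f$ and $g_j=f^{(j)}$ for $j\le s$, so $f\in C^s(S^1)$.

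The only genuinely delicate point, as always in this circle of results, is the passage in (2) from a pointwise little-$o$ estimate on $\hat{f}(k)$ to absolute summability of $\sum_{k}|k|^{s}\,|\hat{f}(k)|$. It is precisely the one-degree gap between $|k|^s$ in (1) and $|k|^{s+1}$ in (2) that permits this step, and executing it carefully (occasionally absorbing a harmless $\varepsilon$) is the crux of the argument. The remaining ingredients --- integration by parts, Riemann--Lebesgue, and term-by-term differentiation of uniformly convergent series --- are entirely routine.
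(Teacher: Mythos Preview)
The paper does not supply its own proof of this theorem: it is quoted from Grafakos' textbook and used throughout as a black box. There is therefore no in-paper argument to compare against, and the route you outline is the standard textbook one. Your treatment of part (1) via iterated integration by parts and Riemann--Lebesgue is correct.

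Your treatment of part (2), however, has a genuine gap. Reading the hypothesis (after correcting the evident misprint in the placement of the exponent) as $|\hat f(k)|\,(1+|k|)^{s+1}\to 0$, you assert that this forces $\sum_k |k|^{j}|\hat f(k)|<\infty$ for every $j\le s$. It does not: at $j=s$ one obtains only $|k|^{s}|\hat f(k)|=o(|k|^{-1})$, and $o(1/k)$ is not summable in general. Concretely, take $\hat f(k)=1/(|k|\log|k|)$ for $|k|\ge 2$ and $0$ otherwise; then $|\hat f(k)|(1+|k|)\sim 1/\log|k|\to 0$, yet since the coefficients are nonnegative and $\sum_k \hat f(k)=\infty$, Fej\'er's theorem shows the associated $L^2$ function is not continuous, already contradicting the $s=0$ case. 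The ``one-degree gap'' you invoke is precisely one power short of what the Weierstrass $M$-test needs, and there is no harmless $\varepsilon$ to absorb here. Grafakos' actual sufficient condition for $f\in C^s$ is the summability hypothesis $\sum_k (1+|k|)^{s}|\hat f(k)|<\infty$, strictly stronger than the little-$o$ decay recorded in the paper; the paper's paraphrase is inaccurate on this point, and your argument inherits the defect.
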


\noindent {\emph{Proof of Proposition \ref{prop:coba}.}}

The parametrized Poincar\'e Lemma immediately implies that for the cotangent balls we have $H^i(U,\SJ_0)=0$ for $i>0$ and $H^0(U, \SJ_0)=C^{\infty}(\R^n,\C)$. Assume that we are in the product case $M \times U$, for a cotangent ball $U \subset \R^2$. The proof can be easily adapted for higher dimensions of $U$.

Recall the natural morphism (\ref{eq:Ku}), in particular, produces  a map
$$\Delta: H^n(M, \SJ) \otimes H^0(U, \SJ_0) \to H^n(M \times U,  \SJ_{10}). $$

Let us check that this map is injective. Consider a pair of elements $(a,f)$ with $a \in \SpS (M \times U) \otimes \Omega_{\SP}^n(M)$ satisfying that $\nabla_M a=0$ and $f \in \Omega_{\SP}^0(U)=C^{\infty}[0,1]$ such that there exists an element $b\in  \Omega_{\SP}^{n-1}(M \times U)$ satisfying that $a\cdot f= \nabla b$ (the pair $(a,f)$ goes to the zero class). Since $f\neq 0$, there is a number $x_0\in[0,1]$ such that $f(x_0) \neq 0$.
 Recall that $U$ is a cotangent ball, therefore
$$ U=  \{ (x, y) \in \R^2 : - \epsilon_1 \leq x \leq \epsilon_2, h_1(x) \leq y \leq h_2(x) \}. $$
Take $h_3(x)= \frac{h_1(x)+h_2(x)}{2}$ and consider the inclusion map
\begin{eqnarray*}
i_0: M & \to & M \times U \\
p & \to & (p,x_0, h_3(x_0)).
\end{eqnarray*}
Define the element $\tilde{b}= i_0^* b$.  Since the covariant derivative commutes with the restriction, we obtain $\nabla_M \tilde{b} =i_0^* \nabla b$. This yields  $\nabla_M (\frac{1}{f(x_0)} \tilde{b}) =a$ and hence $a$ is exact and $[a]=0$. This proves  injectivity.

We will now check the surjectivity of this mapping \footnote{ In the case of foliated cohomology surjectivity of this map was not well sorted in  works prior to the work of Bertelson \cite{puta}. In our opinion a complete proof of this fact for the limit case of foliated cohomology is only achieved in \cite{bertelsonkunneth}.}. Select an element $A\in H^n(M \times U,  \SJ_{10})$. By Lemma \ref{lem:decomposing}, there is a representative $\alpha$ of the class $A$ such that $\alpha \in \SpS(M \times U) \otimes \Omega_{\SP}^{n}(M)$. Recall that  $\nabla_\SP \alpha=0$ yields in coordinates $(x,y)\in U \subset \R^2$ the following equations,
\begin{equation}
\nabla_{\frac{\partial}{\partial y}} \alpha=0. \label{eq:determined}
\end{equation}
Consider the inclusion,
\begin{eqnarray*}
i_h:M \times [-1,1] & \to & M \times U \\
(p,x) & \to & (p,x,h_3(x))
\end{eqnarray*}
 and the pull-back form obtained via $i_h$, $\tilde{\alpha}= i_h^* \alpha$. Observe that, by equation (\ref{eq:determined}), the form $\tilde{\alpha}$ completely determines $\alpha$. The form $\tilde{\alpha}$ can be understood as a $1$-parametric family $\tilde{\alpha}_t(p)=\tilde{\alpha}(p,t)$, for $t\in[-1,1]$, with $\tilde{\alpha}_t$ a closed element of $\Omega_{\SP}^{n}(M)$. Thus, there is a family ${\tilde{\alpha}'}_t$, for $t\in [-2,2]$ satisfying:
\begin{enumerate}
\item $\tilde{\alpha}'_t=\tilde{\alpha}_t$, for $t\in[-1,1]$,
\item $\tilde{\alpha}'_t=0$ for $|t|\geq 3/2$,
\item $\nabla_M \tilde{\alpha}'_t =0$.
\end{enumerate}
This defines an element $\tilde{\alpha}'\in \SpS(M \times [-2,2]) \otimes \Omega_{\SP}^{n}(M)$ and by applying the differential equation (\ref{eq:determined}), we can extend it to a unique closed form $\alpha' \in \SpS(M \times V) \otimes \Omega_{\SP}^{n}(M)$, where the closed domain $V\supset U$ satisfies $V \bigcap (\R \times \{0\})= [-2,2]$.

Fix a point $p\in M$. Define the inclusion map $e_p: [-2,2] \to M \times [-2,2]$. Now we obtain the map
\begin{eqnarray*}
\tilde{\alpha}_p= e^*_p \tilde{\alpha}: [-2,2] & \to  & \SpS(M \times [-2,2])_p \otimes \Omega_{\SP}^{n}(M)_p\simeq \SpS(M)_p \otimes \Omega_{\SP}^{n}(M) \\
t & \to & \tilde{\alpha}(p,t).
\end{eqnarray*}

Remember that $\tilde{\alpha}_p$ extends to a smooth section $\tilde{\alpha}_p: S^1=\R/ (4\Z) \to \SpS(M \times [-2,2])_p \otimes \Omega_{\SP}^{n}(M)_p$. The Fourier coefficients of $\tilde{\alpha}_p$ are computed as,
\begin{equation}
\tilde{\alpha}_{p}(m) = \int_{-2}^2 \tilde{\alpha}_p(t) \cdot e^{i \pi m t /2} dt, \label{eq:Four}
\end{equation}
that are obviously smooth in $p\in M$. This equation defines elements $\tilde{\alpha}(m) \in\SpS(M)_p \otimes \Omega_{\SP}^{n}(M)$ that are closed since the covariant derivative commutes with the integration in  formula (\ref{eq:Four}).

Since the space $M$ is compact and the variation of parameters is continuous, Theorem \ref{thm:Graf} yields the following inequality  for any $s>0$,
\begin{equation*}
|\tilde{\alpha}_{p}(m)| \leq C \cdot |m|^s,
\end{equation*}
where $C>0$ is a constant that does not depend on $p \in M$ ( but which may depend on $s$). So, it can be rewritten as,
\begin{equation}
|\tilde{\alpha}(m)|_{C^0} \leq C \cdot |m|^s, \label{eq:norm}
\end{equation}
Since the cohomology groups are defined as,
$$H^n(M, \SJ_1)=\frac{ \SpS(M) \otimes \Omega_{\SP}^{n}(M)}{ Im \nabla ( \SpS(M) \otimes \Omega_{\SP}^{n-1}(M))},$$
the topology of $H^n(M, \SJ_1)$ is defined as the vector space quotient topology, induced out of the $C^0$-norm in $\Omega_{\SP}^{n}(M)$. Therefore, by using the equation (\ref{eq:norm}) and the decrease of the norm in the projection we obtain
\begin{equation}
|[\tilde{\alpha}(m)]| \leq |\tilde{\alpha}(m)|_{C^0} \leq C \cdot |m^s|, \label{eq:class_norm}
\end{equation}
where $[\tilde{\alpha}(m)]$ is the class represented by the element $\tilde{\alpha}(m)\in \SpS(M) \otimes \Omega_{\SP}^{n}(M)$ in $H^n(M, \SJ_1)$.

Now, we can check that $H^n(M, \SJ) \otimes H^0(V, \SJ_0)= C^{\infty}([-2,2], H^n(M, \SJ))$, i.e. the smooth maps from the interval to the finite vector space $H^n(M, \SJ)$ (we are using the differential equation (\ref{eq:determined}) to uniquely extend the section from $M\times [-2,2]$ to $M \times V$).
Once this identification is done, we can prove surjectivity  of $\Delta$ in the following way:
 Define the formal series,
$$ \hat{\alpha} = \Sigma_{m\in \Z} [\tilde{\alpha}(m)] e^{i \pi m t /2}, $$
\noindent we can use again Theorem \ref{thm:Graf} in combination with the inequality (\ref{eq:class_norm}) to conclude that $\hat{\alpha}$ is smooth. i.e. $\hat{\alpha} \in C^{\infty}([-2,2], H^n(M, \SJ))$. Denote its restriction by $\hat{\alpha}'\in C^{\infty}([-1,1], H^n(M, \SJ))$. The following equality holds $\Delta \alpha = \hat{\alpha}'$ and this proves surjectivity of $\Delta$.

Note that we have worked out the proof for $2-$dimensional cotangent balls,
 for the case of  cotangent balls $U$ having dimension $2k$, we can still apply Poincar\'e Lemma to make sure that the $y$-directions in the ball are constant. In this case we  need to work with Fourier coefficients in the space of maps $C^{\infty}(\mathbb T^k,H^n(M, \SJ))$. All the arguments go through; the key point in the proof being  the finite dimensionality of the space $H^n(M,\SJ_1)$.
$\hfill \Box$

We now proceed with the proof of K\"{u}nneth
 for the general case.

\begin{proof}{\emph{of Theorem \ref{thm:Kun}}}

Denote by $\SP_0$ the vertical foliation in $\R^{2m}$, with coordinates $(x_1, \ldots x_m, y_1, \ldots, y_m)$, whose leaves are defined by the equations
$$ x_i= c_i, c_i\in \R, i=1, \ldots, m. $$
With respect to the standard symplectic structure the foliation $\SP_0$ is standard. Choose a primitive $1$-form as the connection form for the quantization line bundle $L \otimes N^{1/2}$ over $\R^{2n}$ and denote by $\SJ_0$ the associated sheaf of flat sections.

The rest of the proof is standard and follows step by step pg.49 in \cite{BT82}. We outline the main ideas. There is a natural morphism
\begin{equation}
\Psi: H^*(M_1, \SJ_1) \otimes H^*(M_2, \SJ_2) \to H^*(M_1 \times M_2, \SJ_{12})  \label{eq:Ku}
\end{equation}
given by the pull-back of forms in each component. Now we take $U$ and $V$ open sets of $M_1$ and we tensor with the fixed vector space $H^{n-p}(M_2, \SJ_2)$ the associated Mayer-Vietoris sequence to obtain
\begin{eqnarray*}
\cdots &\to & H^p(U \cup V, \SJ_1) \otimes H^{n-p}(M_2, \SJ_2) \\
& \to & (H^p(U , \SJ_1) \otimes H^{n-p}(M_2, \SJ_2)) \oplus (H^p(V, \SJ_1) \otimes H^{n-p}(M_2, \SJ_2)) \\
& \to & H^p(U \cap V, \SJ_1) \otimes H^{n-p}(M_2, \SJ_2) \to \cdots.
\end{eqnarray*}
Summing up for $p=0, \ldots, n$, we obtain the exact sequence
\begin{eqnarray*}
\cdots &\to & \bigoplus_{p=0}^n H^p(U \cup V, \SJ_1) \otimes H^{n-p}(M_2, \SJ_2) \\
& \to & \bigoplus_{p=0}^n (H^p(U , \SJ_1) \otimes H^{n-p}(M_2, \SJ_2)) \oplus (H^p(V, \SJ_1) \otimes H^{n-p}(M_2, \SJ_2)) \\
& \to & \bigoplus_{p=0}^n H^p(U \cap V, \SJ_1) \otimes H^{n-p}(M_2, \SJ_2) \to \cdots
\end{eqnarray*}
to which we apply the morphisms \eqref{eq:Ku} to obtain the following commutative exact diagram
$$ \tiny{
\begin{array}{ccc}
\vdots & & \vdots \\
\downarrow & & \downarrow \\
\bigoplus_{p=0}^n H^p(U \cup V, \SJ_1) \otimes H^{n-p}(M_2, \SJ_2) & \stackrel{\Psi}{\to} & H^n((U\cup V)\times M_2, \SJ_{12})  \\
\downarrow & & \downarrow \\
\bigoplus_{p=0}^n (H^p(U , \SJ_1) \otimes H^{n-p}(M_2, \SJ_2)) \oplus (H^p(V, \SJ_1) \otimes H^{n-p}(M_2, \SJ_2)) & \stackrel{\Psi}{\to} & H^n(U\times M_2, \SJ_{12})\oplus H^n(V\times M_2, \SJ_{12}) \\
\downarrow & & \downarrow \\
\bigoplus_{p=0}^n H^p(U \cap V, \SJ_1) \otimes H^{n-p}(M_2, \SJ_2) & \stackrel{\Psi}{\to} & H^n((U\cap V)\times M_2, \SJ_{12})  \\
\downarrow & & \downarrow \\
\bigoplus_{p=0}^{n+1} H^p(U \cup V, \SJ_1) \otimes H^{n+1-p}(M_2, \SJ_2) & \stackrel{\Psi}{\to} & H^{n+1}((U\cup V)\times M_2, \SJ_{12})  \\
\downarrow & & \downarrow \\
\vdots & & \vdots \\
\end{array}}
$$
The commutativity is obvious except for the block
$$ \small{
\begin{array}{ccc}
\bigoplus_{p=0}^n H^p(U \cap V, \SJ_1) \otimes H^{n-p}(M_2, \SJ_2) & \stackrel{\Psi}{\to} & H^n((U\cap V)\times M_2, \SJ_{12})  \\
\downarrow \delta & & \downarrow \delta \\
\bigoplus_{p=0}^{n+1} H^p(U \cup V, \SJ_1) \otimes H^{n+1-p}(M_2, \SJ_2) & \stackrel{\Psi}{\to} & H^{n+1}((U\cup V)\times M_2, \SJ_{12})
\end{array}}
$$
that we check as follows. Let $\alpha \otimes \beta \in H^p(U \cap V, \SJ_1) \otimes H^{n-p}(M_2, \SJ_2)$. Denote the projections to the factors of the cartesian product $M_1\times M_2$ as $\pi_1$ and $\pi_2$. Therefore
$$ \Psi \circ \delta (\alpha \otimes \beta)= \pi_1^*(\delta \alpha) \wedge \pi_2^*(\beta)$$
and
$$ \delta \circ \Psi (\alpha \otimes \beta)= \delta (\pi_1^*(\alpha) \wedge \pi_2^*(\beta)).$$
We have to check that they are equal and a simple computation shows that it is true, just recalling that the form $\beta$ is closed.\footnote{For this we can follow exactly the same argument of \cite{BT82} (page 50) which consists in picking a partition of unity subordinated to the open sets $U$ and $V$.}

We now conclude the proof of Künneth formula in the same way Bott and Tu do it for the De Rham cohomology case (page 50 of \cite{BT82}).

Observe that because of lemma \ref{prop:coba}, K\"{u}nneth formula holds for $U$ and $V$ from the previous argument, K\"{u}nneth formula also holds for $U\cap V$, thus because of the Five Lemma it also holds for $U\cup V$. The K\"{u}nneth formula now follows by induction on the cardinality of a good cover (or a finite covering in the compact case). This concludes the proof of the theorem.
\end{proof}

\section{Applications I: The case of regular fibrations }
In this section we apply Mayer-Vietoris and K\"{u}nneth formula to compute Geometric Quantization of regular fibrations.

As an application of the previous formalism, we now give a simple proof of the quantization of a real polarization given by a regular fibration. This is exactly the case studied by \'{S}niaticky in \cite{sniatpaper}. Let us point out that a different proof was obtained by Hamilton in \cite{mhthesis} where a C\^{e}ch approach was used to deal with the general toric case (thus real polarizations given by integrable systems also admitting elliptic singularities).\footnote{A different proof of this result seen from the Poisson perspective  is a joint work of the first author of this paper with Mark Hamilton \cite{hamiltonmiranda2}.}

In order to do this, we will apply a K\"{u}nneth argument that will allow to prove the result by recursion on the following Lemma which addresses  the $2$-dimensional case.

\begin{lemma} \label{lem:tira}
Fix the domain $W= (-\epsilon, \epsilon) \times S^1$  endowed with a symplectic structure of integer class $\omega$ and consider as real polarization  the vertical foliation by circles. Then,

\begin{enumerate}
 \item In case there are no Bohr-Sommerfeld leaves then the Geometric Quantization is zero.
 \item If there is one Bohr-Sommerfeld leaf, then the Geometric Quantization is given by $H^0(W, \SJ)=0$ and $H^1(W, \SJ)= \C$.

     \end{enumerate}
\end{lemma}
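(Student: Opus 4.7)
The plan is to solve the Kostant complex directly in suitable coordinates and extract the cohomology by Fourier analysis. I would start by choosing coordinates $(x,y)$ on $W=(-\e,\e)\times S^1$ with leaves $\{x=c\}$, and trivialize $L\otimes N^{1/2}$ on $W$ (which is possible: $N^{1/2}$ is trivial since $\SP$ is $1$-dimensional with trivial tangent bundle, and $L$ is trivial because $H^2(W,\Z)=0$). In such a trivialization the polarized connection reads $\nabla_{\bd_y} = \bd_y - i f(x,y)$ for some smooth $f$. Setting $h(x) := \int_0^1 f(x,t)\,dt$ and $\psi(x,y) := \int_0^y [f(x,t)-h(x)]\,dt$, the function $\psi$ satisfies $\psi(x,0)=\psi(x,1)=0$, so the gauge transformation $s \mapsto e^{i\psi}s$ is globally well-defined and puts the connection in the normal form $\nabla_{\bd_y} = \bd_y - i h(x)$. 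Writing $\omega = a(x,y)\,dx\wedge dy$, one checks that $h'(x) = \int_0^1 a(x,y)\,dy$, which is nowhere zero by nondegeneracy of $\omega$; consequently $h$ is a diffeomorphism onto its image and the Bohr-Sommerfeld leaves are exactly $\{h(x)\in 2\pi\Z\}$, a discrete set of points.

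For $H^0$ the flat-section equation $\bd_y s = i h(x) s$ has general solution $s(x,y) = s_0(x)e^{ih(x)y}$. Single-valuedness on $S^1$ forces $s_0(x)(e^{ih(x)}-1)=0$, hence $s_0\equiv 0$ on the open dense complement of the BS set. In case (1) this complement is all of $W$, so $s_0\equiv 0$; in case (2) $s_0$ vanishes off the isolated point $x_0$, hence everywhere by smoothness. In both cases $H^0(W,\SJ)=0$.

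For $H^1$, I would use Fourier expansion along the circle. Given $\b\,dy \in \SpS \otimes \Omega_{\SP}^1(W)$, expand $\b(x,y) = \sum_{n\in\Z} \b_n(x) e^{2\pi i n y}$ and seek $\g(x,y)=\sum_n \g_n(x) e^{2\pi i n y}$ with $\bd_y \g - i h(x) \g = \b$, yielding the formal solution $\g_n(x) = \b_n(x)/[i(2\pi n - h(x))]$. In case (1), $h$ stays bounded away from $2\pi\Z$ on $\overline W$, so the denominators are bounded below by $c(1+|n|)$ for some $c>0$, while the Fourier coefficients $\b_n(x)$ and all their $x$-derivatives decay faster than any polynomial in $n$ since $\b$ is smooth; hence the series for $\g$ converges in every $C^k$-norm to a smooth primitive, and $H^1(W,\SJ)=0$.

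In case (2), with $h(x_0)=2\pi n_0$, the estimate above again yields smooth $\g_n$ for all $n\neq n_0$, and the only obstruction sits in the $n_0$-th Fourier mode. Since $h'(x_0)\neq 0$, the factor $h(x)-2\pi n_0$ has a simple zero at $x_0$, so by Hadamard's lemma the quotient $\g_{n_0}(x)=\b_{n_0}(x)/[i(2\pi n_0 - h(x))]$ extends smoothly across $x_0$ if and only if $\b_{n_0}(x_0)=0$. This gives a linear functional $[\b\,dy]\mapsto \b_{n_0}(x_0)$ on $H^1(W,\SJ)$, which is surjective (choose $\b$ a bump supported near $x_0$ with $\b_{n_0}(x_0)\neq 0$) and whose kernel is exactly the image of $\nabla_\SP$. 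The main technical point will be verifying that this functional descends to cohomology, namely that $\b_{n_0}(x_0)=0$ whenever $\b\,dy$ is $\nabla_\SP$-exact; this follows from a direct calculation in the normal-form connection, since for $\b=\nabla_\SP \gamma$ one has $\b_{n_0}(x)=i(2\pi n_0-h(x))\gamma_{n_0}(x)$, which vanishes at $x_0$. I would therefore conclude $H^1(W,\SJ)\cong \C$.
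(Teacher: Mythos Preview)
Your argument is correct and takes a genuinely different route from the paper. The paper proves this lemma by Mayer--Vietoris: it covers $W$ by two cotangent balls $U$ and $V$ (each with $H^0 \cong C^\infty((-\e,\e),\C)$ and $H^1=0$), and then reads off $H^1(W,\SJ)$ as the cokernel of the restriction map $r_1-r_2$, which it identifies with an $x$-parametrized $2\times 2$ linear system whose determinant vanishes precisely at the Bohr--Sommerfeld values of $x$. You instead put the polarized connection in the normal form $\bd_y - ih(x)$ by a global gauge transformation and solve the Kostant complex directly via Fourier expansion in the circle variable, so that exactness decouples into the scalar equations $i(2\pi n - h(x))\g_n=\b_n$ and the sole obstruction is the single number $\b_{n_0}(x_0)$.

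Your approach is more elementary and yields an explicit isomorphism $H^1(W,\SJ)\to\C$ as a concrete evaluation functional, whereas the paper's argument is tailored to illustrate the Mayer--Vietoris machinery that drives the rest of the article (in particular the reproof of \'Sniatycki's theorem and the computations for the irrational flow). One minor point: your claim that ``$h$ stays bounded away from $2\pi\Z$ on $\overline W$'' is not guaranteed by the hypotheses, since nothing prevents $h(x)\to 2\pi m$ as $x\to\pm\e$; however the estimate $|2\pi n - h(x)|\geq c(1+|n|)$ that you actually need holds on every compact subset of $(-\e,\e)$, and since smoothness of $\g$ is a local condition this suffices for the convergence of your series.
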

\begin{proof}
Take coordinates $(x, \theta)\in (-\epsilon, \epsilon) \times S^1$. Recall that $\omega= f(x,\theta)dx \wedge d\theta$, for some $f(x,\theta) >0$. Fix $S^1=\R/\Z$. The form $\lambda= (\int_{0}^{x} f(t,\theta)dt) d\theta$ is a primitive for $\omega$ in the open set $(-\epsilon, \epsilon) \times (0,1)$. So $\lambda=h(x,\theta) d\theta$, where $\frac{dh}{dx}>0$. Let us take a system of coordinates over $U$ in which we can trivialize the prequantization bundle in the horizontal directions starting with a parallel section along the vertical axis. In this bundle trivialization, the connection $1$-form for the prequantizable bundle is expressed as,
$$ \nabla = d -2\pi i h(x,\theta)d\theta. $$

Let us denote by $s_0$ this trivializing section. Any parallel section $s:W \to L\simeq W \times \C$ can be expressed in terms of this trivializing section by means of the following formula,
\begin{equation}
s(x,\theta)= f(x,1/2) e^{\int_{1/2}^{\theta} 2\pi ih(x,s)ds} s_0. \label{eq:flat}
\end{equation}

From now on,  and for the sake of simplicity, we will identify the section $s_U(x_0,\theta_0)$ in a given neighbourhood with a function $f(x_0,\theta_0)$ via this trivializing section.

Now we take a covering of the domain $W$ by the pair of open sets
$$ U = (-\epsilon, \epsilon) \times (-0.1, 0.6). $$
$$ V = (-\epsilon, \epsilon) \times (0.4, 1.1). $$
The intersection is the domain
$$ U \cap V = (-\epsilon, \epsilon) \times ((-0.1, 0.1) \cup (0.4, 0.6))= W_1 \cup W_2. $$
Lemma \ref{prop:coba} entails that $H^1(D, \SJ)=0$ and $H^0(D, \SJ)=C^{\infty}((-\epsilon, \epsilon), \C)$ for any of the domains $D\in\{U, V, W_1, W_2\}$ because all of them are cotangent balls.

\begin{figure}[ht]
\includegraphics[scale=0.3]{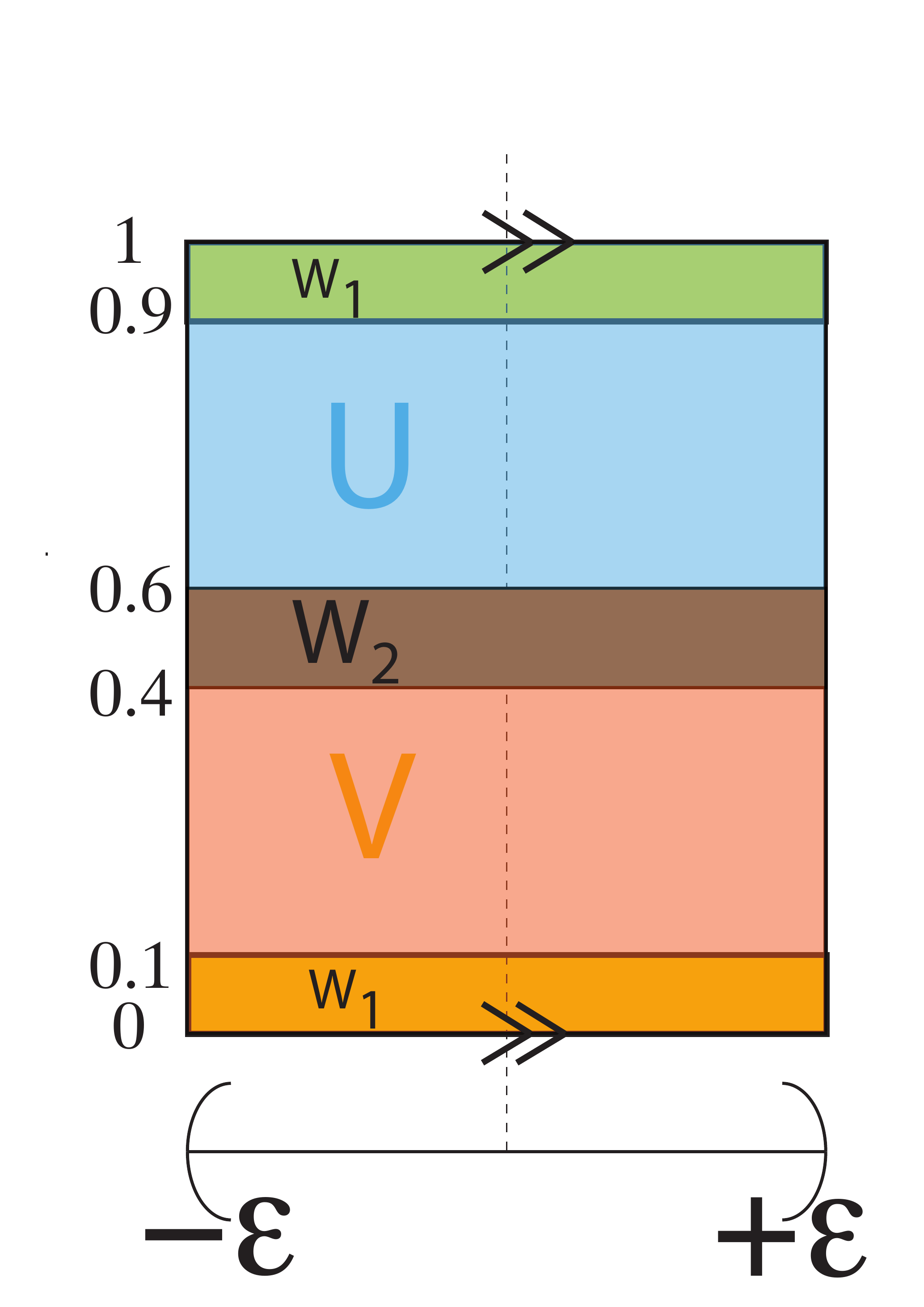}
\caption{Picture of the Mayer-Vietoris covering.}\label{fig:uv}
\end{figure}

We apply the Mayer-Vietoris sequence \eqref{eq:MV} to the pair $U,V$ to obtain
$$
\xymatrix{
 0 \ar[r] & C^{\infty}((-\epsilon, \epsilon), \C)\oplus C^{\infty}((-\epsilon, \epsilon), \C) \ar[r]^-{r_1-r_2} & C^{\infty}((-\epsilon, \epsilon), \C) \oplus C^{\infty}((-\epsilon, \epsilon), \C) \ar[dll] \\
H^1(M, \SJ) \ar[r] &  0 \ar[r] & 0
}
$$

Observe that $H^0(W, \SJ)=0$ because there are no global flat sections since the space of closed non Bohr-Sommerfeld orbits is dense and over them any global section has to vanish.

 Let us now compute $H^1(W, \SJ)$;  The exactness of the sequence yields,
$$H^1(W, \SJ)= \frac{(C^{\infty}((-\epsilon, \epsilon), \C) \oplus C^{\infty}((-\epsilon, \epsilon), \C))} {  Im (r_1-r_2)} $$

So we just need to examine the image of $r_1-r_2$. In order to do that,
take sections $s_U \in H^0(U, \SJ)$, $s_V \in H^0(V, \SJ)$, $s_{W_1} \in H^0(W_1, \SJ)$ and
$s_{W_2} \in H^0(W_2, \SJ)$.

 From formula \eqref{eq:flat} it is easily seen that the restrictions of the sections $s_U(x, 0.5)$, $s_V(x, 0.5)$, $s_{W_2}(x, 0.5)$ and $s_{W_1}(x,0)$ completely determine the sections. Also observe that we are identifying the sections with functions with the help of a local trivializing section $s_0$.
  Thus the space of global flat sections of these domains is $C^{\infty}((-\epsilon, \epsilon), \C)$. Now we want to determine the image of the morphism $r_1-r_2$. Choose an element $(s_{W_1}, s_{W_2})$  of the space $H^0(W_1, \SJ) \oplus H^0(W_2, \SJ)$. We look for sections $s_U$ and $s_V$ such that the image under $r_1-r_2$ are the given pair. For this it is necessary to impose,
\begin{equation}
s_{W_1}(x, 0.5)= s_U(x, 0.5) -s_V(x, 0.5). \label{eq:im1}
\end{equation}
Using the equation \eqref{eq:flat} we obtain the following condition
\begin{equation}
s_{W_2}(x, 0)= s_U(x, 0.5)e^{\int_{0.5}^{0} 2\pi ih(x,s)ds}  -s_V(x, 0.5)e^{\int_{0.5}^{1} 2\pi ih(x,s)ds}. \label{eq:im2}
\end{equation}
The equations \eqref{eq:im1} and \eqref{eq:im2} provide a family of systems of two equations and two variables depending on the parameter $x\in (-\epsilon, \epsilon)$. In order for them to have solution for every value of the pair $(s_{W_1}, s_{W_2})$, the following conditions need to hold,
$$ \left| \begin{array}{cc}
1 & -1 \\
e^{\int_{0.5}^{0} 2\pi ih(x,s)ds} & -e^{\int_{0.5}^{1} 2\pi ih(x,s)ds}
\end{array} \right| \neq 0.
$$
Observe that this is not satisfied exactly for the values of $x$ lying in a  Bohr-Sommerfeld orbit.

 Summing up, this implies,

 \begin{enumerate}

 \item In the case there are no Bohr-Sommerfeld leaves the mapping $r_1-r_2$ is surjective and therefore $H^1(W, \SJ)=0$.

 \item In the case there is one Bohr-Sommerfeld leaf, the sections $s_{W_1}$ and $s_{W_2}$ are related by one equation; the image of $r_1-r_2$ has codimension $1$ and $H^1(W, \SJ)=\mathbb C$.

     \end{enumerate}
   and this finishes the proof of the Lemma.
\end{proof}

Now, the classical \'{S}niaticky theorem becomes a simple consequence of this Lemma and K\"{u}nneth formula. Let $\SF$ be a Lagrangian foliation on a manifold $(M, \omega)$. We say that a torus $\mathbb T^n$, that is a leaf of the foliation, admits a cotangent neighbourhood if there is a neighborhood $U$ of $\mathbb T^n$ and a symplectic chart $\phi: U \to V \subset \mathbb T^n\times \R^n$ such that
\begin{itemize}
\item The Lagrangian foliation becomes the  foliation given by leaves $\mathbb T^n \times \{ p \}$.
\item There are $(x_1, \ldots, x_n)$ global affine coordinates in $\mathbb T^2$ and $(y_1, \ldots, y_n)$ the standard coordinates of $\R^n$, such that the symplectic form $\omega_{\phi}= \phi^*( \omega)$ can be written as
$$ \omega_{\phi} (x_1, \ldots, x_n, y_1, \ldots, y_n) = \Sigma_{i=1}^{n}dx_i \wedge dy_i. $$
\end{itemize}
As an example, any compact fiber of an integrable systems admits a cotangent neighbourhood (the fiber is a Liouville torus and the foliation is semilocally a fibration by tori \cite{arnold, duistermaat}).
We can now prove the following theorem which is a reformulation of  \'{S}niaticky's theorem.

\begin{theorem}[\'{S}niaticky]
Let $(M^{2n}, \omega)$ be a symplectic manifold, $N$ a smooth manifold and $\pi: M \to N$ a fibration by tori. Assume that all the fibers admit cotangent neighborhoods. Moreover assume that the number of Bohr-Sommerfeld fibers is finite and equal to $k$. Then,
\begin{enumerate}
\item $H^i(M, \SJ)= 0$, for all $i \neq n$.
\item $H^n(M, \SJ)= \C^k$.
\item $\SQ (M, \SP)= \C^k$.
\end{enumerate}
\end{theorem}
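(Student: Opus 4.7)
The plan is to reduce the computation to Lemma~\ref{lem:tira} via the K\"unneth formula (Theorem~\ref{thm:Kun}) applied on cotangent neighborhoods, and then to patch the local answers together with the Mayer--Vietoris long exact sequence (Corollary~\ref{coro:Mayer}).

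First I would perform the local computation. Let $F$ be a torus fiber and let $\phi : U \to V \subset \mathbb{T}^n \times \R^n$ be a cotangent neighborhood of $F$, shrunk so that $U$ contains at most one Bohr--Sommerfeld fiber. Since $\omega_\phi = \sum_{i=1}^n dx_i \wedge dy_i$, the primitive $\lambda = \sum y_i\, dx_i$ splits as a sum of primitives on the planes $(x_i, y_i)$; trivializing the prequantum line bundle and the metaplectic correction compatibly with this splitting, the entire quantization datum on $U$ becomes the external tensor product of those on the strips $W_i = S^1 \times (-\epsilon_i, \epsilon_i)$. Each $W_i$ is an open submanifold of a compact manifold and has finite-dimensional quantization by Lemma~\ref{lem:tira}, so Theorem~\ref{thm:Kun} applies iteratively to give
\[
H^*(U, \SJ) \;\cong\; \bigotimes_{i=1}^{n} H^*(W_i, \SJ_i).
\]
By Lemma~\ref{lem:tira} each factor is either zero or $\C$ concentrated in degree one, according to whether $W_i$ contains a Bohr--Sommerfeld circle. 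Since a fiber in $V$ is Bohr--Sommerfeld precisely when each coordinate circle is, this tensor product equals $\C$ concentrated in degree $n$ when $U$ contains a Bohr--Sommerfeld fiber, and vanishes in all degrees otherwise.

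For the global step I would choose disjoint open cotangent neighborhoods $U_1, \ldots, U_k$ of the Bohr--Sommerfeld fibers $F_1, \ldots, F_k$, each containing exactly one, set $U = \bigsqcup_\alpha U_\alpha$, and choose an open $V \subset M$ with $U \cup V = M$ avoiding all the $F_\alpha$. Then neither $V$ nor $U \cap V$ contains a Bohr--Sommerfeld fiber. Covering $V$ (resp. $U \cap V$) by cotangent neighborhoods without Bohr--Sommerfeld fibers, the local computation above shows that each contributes trivial cohomology, and a Mayer--Vietoris induction combined with the Five Lemma yields $H^*(V, \SJ) = 0$ and $H^*(U \cap V, \SJ) = 0$. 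Plugging this into Corollary~\ref{coro:Mayer} gives $H^i(M, \SJ) \cong H^i(U, \SJ) = \bigoplus_{\alpha=1}^{k} H^i(U_\alpha, \SJ)$, which equals $\C^k$ in degree $n$ and vanishes otherwise, proving (1) and (2); assertion (3) is then immediate from the definition of $\SQ(M, \SP)$.

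The main obstacle is establishing $H^*(V, \SJ) = 0$ (and similarly for $U \cap V$), since $V$ need not be compact and can carry intricate topology. The argument proceeds by covering the base of $\pi|_V$ by contractible opens whose preimages are cotangent neighborhoods and inducting via Mayer--Vietoris, relying on essentially the same good-cover hypothesis that underlies Theorem~\ref{thm:Kun}. When $M$ is compact a finite cover suffices; otherwise one invokes the closed-set Mayer--Vietoris of Subsection~\ref{subsec:closed} with a locally finite cover by closed cotangent balls. This is the only step where a compactness or good-cover assumption is used in an essential way.
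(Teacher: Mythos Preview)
Your proposal is correct and follows essentially the same strategy as the paper: compute the cohomology of a cotangent neighborhood by writing it as a product of strips and applying K\"unneth iteratively with Lemma~\ref{lem:tira}, then glue via Mayer--Vietoris. The only organizational difference is that the paper builds $M$ up sequentially as $M_j=\bigcup_{i\le j}U_i$ over a locally finite cover of the base (with the Bohr--Sommerfeld fibers isolated in the first $k$ pieces), whereas you first isolate the Bohr--Sommerfeld fibers in $U$ and then run the Mayer--Vietoris induction separately on the BS-free set $V$; the underlying inductive argument and its good-cover/compactness caveat are the same in both cases.
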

\begin{proof}
Take a locally finite covering $\{ V_i \}_{i\in \N}$ of $N$ in such a way that the image by $\pi$ of each Bohr-Sommerfeld leaf is covered by a single open set. Denote by $U_i= \pi^{-1} (V_i)$ that is a locally finite covering of $M$. Without any loss of generality we may assume that the first $k$ indices correspond to the open sets containing the Bohr-Sommerfeld orbits.

We claim that $\SQ (U_j, \SP)=0$ for any $j>k$ and $\SQ(U_i, \SP)= \C$ for $i\leq k$. This is because the
neighborhood $(U_i, \SJ_i)$ trivializes, using the cotangent neighborhood trivialization, as:
$$ (U_i, .\SJ_i) = \prod_{l=1}^n ((-\epsilon, \epsilon) \times S^1, \tilde{\SJ}), $$
where the sheaf $\tilde{\SJ}$ is the one associated to the Lagrangian foliation defined in the statement of Lemma \ref{lem:tira}. We are under the hypotheses of K\"unneth Theorem (Theorem \ref{thm:Kun}) and we can apply induction and K\"{u}nneth formula  to prove that the Quantization of the open sets $U_i$ is the one stated above.

Last but not least, we need to perform a Mayer-Vietoris argument to glue these neighbourhoods.
To that end, define $M_j= \bigcup_{i=1}^j U_i$; the Mayer-Vietoris sequence yields,
\begin{enumerate}
\item $H^i(M_k, \SJ)= 0$, for all $i \neq n$.
\item $H^n(M_k, \SJ)= \C^k$.
\item $\SQ (M_k, \SP)= \C^k$.
\end{enumerate}
Moreover,  the Mayer-Vietoris sequence also yields  $\SQ(M_j, \SP)= \SQ(M_l, \SP)$, for any $j,l \geq k$. This concludes the proof.
\end{proof}

\section{Applications II: Irrational foliation of the $2$-torus.}\label{irrational}

In this section we apply Mayer-Vietoris and K\"{u}nneth formula to compute Geometric Quantization of the irrational flow on the torus.

Let $\mathbb T^2= \R^2/\Z^2$ be the $2$-torus with coordinates $(x,\theta)$. Define the vector field $X_{\eta}= \eta \frac{\partial}{\partial x}+ \frac{\partial}{\partial \theta}$, with $\eta\in \R$. This vector field descends to a vector field in the quotient torus that we still denote $X_{\eta}$. Denote by $\SP_{\eta}$ the associated foliation in $\mathbb T^2$. If $\eta$ is an irrational number, the foliation $\eta$ is named the irrational foliation of the torus with slope $\eta$. It is well-known \cite{denjoy,kneser} that any foliation of the torus without periodic orbits is topologically conjugate to $\SP_{\eta}$ for some irrational $\eta$. So the next result computes the Geometric Quantization of the $2$-torus polarized by any regular foliation without periodic orbits up to topological equivalence. The metalinear bundle $N$ is trivial in this case.
\begin{theorem}
Let $(\mathbb T^2, \omega)$ be the $2$-torus with the standard symplectic structure $\omega=p dx\wedge d\theta$ of area $p\in \N$ and let $\SP_{\eta}$ the irrational foliation of slope $\eta$ in this manifold. Then
\begin{enumerate}
\item The Geometric Quantization space is infinite dimensional.
\item The foliated cohomology space $\SQ(\mathbb T^2, \SJ)$ is infinite dimensional if the irrationality measure of $\eta$ is infinite (i.e. it is a Liouville number). If the irrationality measure is finite then $\SQ(\mathbb T^2, \SJ)= \C \bigoplus \C$.
\end{enumerate}
\end{theorem}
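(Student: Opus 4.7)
The strategy is Mayer--Vietoris adapted to a cover transverse to the slope-$\eta$ direction, combined with Fourier analysis on $S^1$. Concretely, I would cover $\mathbb T^2 = S^1_x\times S^1_\theta$ by $U = S^1_x\times(0,1)_\theta$ and $V = S^1_x\times(-\epsilon,\epsilon)_\theta$, whose intersection is the disjoint union of two thin strips $W_1 = S^1_x\times(0,\epsilon)$ and $W_2 = S^1_x\times(1-\epsilon,1)$. Each of these open sets meets every leaf of $\SP_\eta$ in a single arc, and although the leaf space of such a strip is a circle rather than a point, the fibrewise argument of Lemma~\ref{lem:decomposing}/Proposition~\ref{prop:coba} still goes through: trivialise $L\otimes N^{1/2}$ over the strip starting from a parallel section along a transversal, then solve the first-order ODE $\nabla_{X_\eta} s = g$ along each leaf with arbitrary smooth boundary data on the transversal. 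This yields
$$H^0(D,\SJ_D)\cong C^\infty(S^1),\qquad H^i(D,\SJ_D)=0\ \ \text{for }i\geq 1,$$
for each $D\in\{U,V,W_1,W_2\}$, so Corollary~\ref{coro:Mayer} collapses to $H^0(\mathbb T^2,\SJ)=\ker\Delta$ and $H^1(\mathbb T^2,\SJ)=\coker\Delta$ for the Mayer--Vietoris difference $\Delta\colon C^\infty(S^1)^2\to C^\infty(S^1)^2$.

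Next, I would make $\Delta$ explicit by integrating $\nabla_{X_\eta}s=0$ along the flow lines using the local primitive $-p\theta\,dx$ of $\omega$. On $W_1$ the two trivialisations agree, while on $W_2$ crossing the cut $\theta=1\sim 0$ picks up the prequantum transition $e^{-2\pi i p x}$. Eliminating one variable from the resulting $2\times 2$ system reduces the problem to the kernel and cokernel of the single operator
$$T_{p,\eta}\phi(u) \;=\; \phi(u+\eta)-e^{2\pi i p u}\phi(u)$$
on $C^\infty(S^1)$ (up to a nonzero constant phase), while the $\omega\to 0$ limit relevant for part (2) gives the degenerate operator $T_{0,\eta}\phi=\phi\circ R_\eta-\phi$.

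Finally, I would carry out the Fourier analysis. For $T_{0,\eta}$, diagonal in the Fourier basis with eigenvalues $e^{2\pi ik\eta}-1$, the kernel is the line of constants; the Diophantine inequality $|e^{2\pi ik\eta}-1|\geq C|k|^{-\tau}$ ensures that every $\chi\in C^\infty(S^1)$ with $\hat\chi(0)=0$ lies in the image, yielding cokernel $\C$ and total $\C\oplus\C$, whereas a Liouville $\eta$ kills this bound along a subsequence of modes and produces an infinite-dimensional cokernel, establishing part~(2) and recovering El~Kacimi--Alaoui's result. For $T_{p,\eta}$, the Fourier relation $\hat\phi_k\,e^{2\pi ik\eta}-\hat\phi_{k-p}=\hat\chi_k$ couples modes within each arithmetic progression $k\equiv r\pmod p$; multiplicative telescoping of the resulting first-order difference equation along each class shows that $|\hat\phi_{r+jp}|$ is independent of $j$, and smoothness forces $H^0(\mathbb T^2,\SJ)=0$. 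Analysing the same telescoped sums for the inhomogeneous problem then produces the infinite-dimensional cokernel of part~(1).

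The main technical obstacle is the cokernel analysis for $T_{p,\eta}$: unlike the diagonal $T_{0,\eta}$, where Fourier reduces the cohomology to a sequence of scalar divisions governed by small-divisor estimates in $\eta$, the coupling to the $k-p$ mode forces one to work residue class by residue class and then to compare the formal Fourier obstructions with the Fr\'echet topology of $C^\infty(S^1)$. Conceptually, what makes this computation different from the regular-fibration setting of Section~4 is that the leaf space of $\SP_\eta$ is non-Hausdorff and every orbit is dense, so the finite Bohr--Sommerfeld count governing the \'Sniatycki case is replaced by an obstruction space of infinite dimension.
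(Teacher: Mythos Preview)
Your proposal is correct and follows essentially the same route as the paper: cover $\mathbb T^2$ by two $\theta$-strips, use that each strip has $H^0\cong C^\infty(S^1,\C)$ and $H^1=0$, feed this into Mayer--Vietoris, and reduce the computation of $H^1(\mathbb T^2,\SJ)$ to the cokernel of a Fourier-analysable operator on $C^\infty(S^1)$; the small-divisor dichotomy for $T_{0,\eta}$ and the mode-coupling recursion $\hat\phi_k e^{2\pi ik\eta}-\hat\phi_{k-p}=\hat\chi_k$ for $T_{p,\eta}$ are exactly the mechanisms the paper exploits. The only visible differences are cosmetic: the paper takes two symmetric half-strips rather than your ``big strip plus thin collar'' cover (so it carries a $2\times2$ system a bit longer before substituting down to the analogue of your $T_{p,\eta}$), and it establishes $H^0=0$ in the prequantised case by the topological observation that a section of a nontrivial line bundle must vanish somewhere, hence on a dense leaf, rather than via your telescoping argument showing $|\hat\phi_{r+jp}|$ is constant in $j$.
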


\begin{remark}
It is interesting to point out here as it was done by Heitsch in \cite{heitsch} in the case $\eta$ is not a Liouville number, there is a nice interpretation of the elements of $H^1(M,\SJ)$ in the limit case of foliated cohomology as the associated infinitesimal deformations of a differentiable family of foliations.
\end{remark}
\begin{proof}
Take coordinates $(x, \theta)\in S^1 \times S^1$. Recall that $\omega= kdx \wedge d\theta$, for some $k>0$. Let $\lambda_{\eta}=  d\theta-  \frac{1}{\eta} dx$ be the $1$-form defining the Lagrangian foliation.



In the case of  foliated cohomology $\alpha=0$ and therefore the parallel transport equation implies that
$$s(x, 1/2)=s(x+ k/\eta, 1/2), $$
for all $k \in \mathbb Z$. Therefore the section is constant in a dense set and therefore constant because of the irrationality of $\eta$. Thus $H^0(\mathbb T^2, \SJ)= \C$. For the Geometric Quantization case, assume that there is a section $\sigma \in H^0(\mathbb T^2, \SJ)$. Therefore, since the prequantizable bundle is topologically non-trivial we have that there is a point $p\in T^2$ such that $\sigma(p)=0$. The parallel transport along the leaf of the foliation containing $p$ allows to conclude that the section vanishes along the leaf. Since, the leaf is dense we obtain that $\sigma=0$. So, we have that $H^0(\mathbb T^2, \SJ)= 0$ as claimed.

As in previous Sections, we cover $\mathbb T^2$ by two open sets $U= S^1 \times (-0.1, 0.6)$ and $V= S^1 \times (0.4, 1.1)$. The intersections are given by
\begin{eqnarray*}
W_1 & = & S^1 \times (-0.1, 0.1), \\
W_2 & =  & S^1 \times (0.4, 0.6).
\end{eqnarray*}

To compute the Geometric Quantization of $(\mathbb T^2, \SP_{\eta})$ we use the Mayer-Vietoris sequence applied to $U$ and $V$.  As we have already proved in Section \ref{sec:functorial}, $H^0(S^1 \times (-\delta, \delta), \SJ)= C^{\infty}(S^1, \C)$ and $H^1(S^1 \times (-\delta, \delta), \SJ)=0$. Thus the sequence becomes,
$$
\xymatrix{
0 \ar[r] & C^{\infty}(S^1, \C)\oplus C^{\infty}(S^1, \C) \ar[r]^-{r_1-r_2} & C^{\infty}(S^1, \C) \oplus C^{\infty}(S^1, \C) \ar[dll] \\
H^1(\mathbb T^2, \SJ) \ar[r] & 0 \ar[r] & 0
}
$$

Now we have sections $s_U(x, \theta)$, $s_V(x, \theta)$, $s_1(x,\theta)$, $s_2(x,\theta)$ over $U$, $V$, $W_1$ and $W_2$ respectively. By the parallel transport equation these sections are completely determined by the restrictions $s_U(x, 0.5)$, $s_V(x, 0.5)$, $s_1(x,0)$, $s_2(x,0.5)$. We want to solve the equations,

\begin{equation}
s_{2}(x, 0.5)= s_U(x, 0.5) -s_V(x, 0.5). \label{eq:im1}
\end{equation}
Using  \eqref{eq:flat}  the following condition is obtained,
\begin{equation}
s_{1}(x, 0)= s_U(x,0)  -s_V(x,0). \label{eq:im2}
\end{equation}

Let us start by solving the foliated cohomology case. In that case the sections are functions and the parallel transport implies just that the functions are constant along each leaf.  The equation (\ref{eq:im2}) reads
\begin{equation}
s_1(x, 0)= s_U(x+\frac{\eta}{2}, 0.5) -s_V(x-\frac{\eta}{2}, 0.5). \label{eq:im3}
\end{equation}
We denote $w(x)= s_{W_1}(x,0)$, $\hat{w}(x)=s_{W_2}(x, 0.5)$, $u(x)=s_U(x,0.5)$ and $v(x)=s_V(x, 0.5)$. Since they are smooth functions over the circle, they admit a Fourier series representing them. The coefficients associated to the previous $4$ functions are ${w_k}$, ${\hat{w}_k}$, ${u_k}$ and ${v_k}$ respectively. By Theorem \ref{thm:Graf} the decay of the Fourier coefficients of $u_k$ and $v_k$ satisfies for any fixed positive integer $q>0$ the following relations,
\begin{eqnarray}
\lim_{k \to \pm \infty} \frac{|u_k|}{1+ |k|^q} & = & 0 \label{eq:decayu}, \\
\lim_{k \to \pm \infty} \frac{|v_k|}{1+ |k|^q} & = & 0 \label{eq:decayv}.
\end{eqnarray}
The equations (\ref{eq:im1}) and (\ref{eq:im3}) imply the following set of equations in the Fourier coefficients:
\begin{equation*}
\left( \begin{array}{cc} \hat{w}_k \\ w_k \end{array} \right) = \left( \begin{array}{cc} 1 & -1 \\ e^{\pi i k \eta} & -e^{-\pi i k \eta} \end{array} \right) \cdot \left( \begin{array}{cc}u_k \\ v_k \end{array} \right), \hspace{0.3cm} k\in \Z.
\end{equation*}
Since the number $\eta$ is irrational the equation has a unique solution expressed as
\begin{equation}
\left( \begin{array}{cc}u_k \\ v_k \end{array} \right) = \left( \begin{array}{cc} -e^{-\pi i k \eta}  & 1 \\ -e^{\pi i k \eta} & 1 \end{array} \right) \frac{1}{e^{\pi i k \eta} - e^{-\pi i k \eta}} \left( \begin{array}{cc} \hat{w}_k \\ w_k \end{array} \right), \hspace{0.3cm} k\in \Z- \{ 0\}. \label{eq:solution}
\end{equation}
In particular, when $\hat{w}_0\neq w_0$ there is no solution.

Thus, in order to have a solution from now on we will assume  $\hat{w}_0= w_0$.

If $\nu \geq 2$, the irrationality measure of $\eta$, is finite (i. e. it is not a Liouville number),  we obtain
\begin{equation}
| \eta - \frac{p}{k}| \geq \frac{1}{k^{\nu}}, \label{eq:measure}
\end{equation}
for any pair $(p,k)$ with $k$ large enough. From there we obtain
$$ |k\eta -p| \geq \frac{1}{k^{\nu +1}}. $$
Therefore, we have
\begin{equation}
\left| \frac{1}{e^{\pi i k \eta} - e^{-\pi i k \eta}} \right| = \frac{1}{|e^{2\pi  i k \eta}-1|} \leq \frac{1}{|k\eta- p|} \leq k^{\nu+1}, \label{eq:master_in}
\end{equation}
the first inequality comes from the inequality
$$ |e^{2\pi i t}- e^{2\pi i s}| \leq | t-s |.$$
We easily obtain from the equations (\ref{eq:solution}), (\ref{eq:decayu}), (\ref{eq:decayv}) and the inequality (\ref{eq:master_in}) the following limits, for any $q\geq0$:
\begin{eqnarray*}
\lim_{k \to \pm \infty} \frac{|\hat{w}_k|}{1+ |k|^q} & = & 0,  \\
\lim_{k \to \pm \infty} \frac{|w_k|}{1+ |k|^q} & = & 0.
\end{eqnarray*}
Now, we have bounded the decay of the coefficients of the solutions $u(x)$ and $v(x)$. We are under the hypotheses of the Theorem \ref{thm:Graf} to conclude that $u$ and $v$ are smooth functions on the circle. Therefore  going back to the long exact sequence, the map $r_1-r_2$ is surjective when $w_0=\hat{w}_0$ yielding  $H^1(\mathbb T^2, \SJ)=\C$.

Now when of $\eta$ is a Liouville number we may assume that, there exists a sequence of pairs of positive integers $\{ p_s, k_s \}_{s\in \Z^*}$ such that
$$ | \eta - \frac{p_s}{k_s} | < \frac{1}{k^s}, $$
satisfying that $k_s$ is strictly increasing. From this equation we obtain arguing as in the previous case that,
\begin{equation}
\frac{1}{|e^{2\pi i k\eta}-1|} > \frac{k_s^{s-1}}{2\pi}. \label{eq:bound}
\end{equation}
Define the constant sequence $a(0)_k= 1$. Now, we define the functions $\hat{w}=0$ and $w$ with Fourier coefficients
$$
w_l= \left\{ \begin{array}{ll} \frac{a(0)_k}{k_s^{s-1}}, & {\rm if} \, \exists s \in \Z^+, {\rm \,such \,that} \,k_s=l, \\ 0, & {\rm otherwise. }\end{array} \right.
$$
 Notice that because of Theorem  \ref{thm:Graf}, the function $w$ is smooth. However, by combining the equation  (\ref{eq:solution}) and the inequality (\ref{eq:bound}) we obtain
$$|v_{k_s}| \geq \frac{1}{2\pi}.$$
Therefore, the function $v$, solving equations (\ref{eq:im1}) and (\ref{eq:im3}) for the selected input data $\hat{w}$ and $w$, is not smooth according to Theorem \ref{thm:Graf}  since its Fourier coefficients do not converge to zero for $k$ large.

We have shown that the map $r_1-r_2$ is not surjective. Therefore the cokernel of the map is not zero. To show that the cokernel is infinite dimensional we define the family of sequences $a(r)$ defined as follows:
$$
a(r)_k = \left\{ \begin{array}{ll} 1, & k\neq r, \\ 2, & k=r \end{array} \right.
$$
Then, we choose input data $\hat{w}=0$ and $w$  a smooth function with the following  prescribed Fourier coefficients,
$$
w_l= \left\{ \begin{array}{ll} \frac{a(r)_k}{k_s^{s-1}}, & if \, \exists s \in \Z^+, \,such \,that \,k_s=l, \\ 0, & otherwise. \end{array} \right.
$$
The linear expand of the set of choices determines an infinite dimensional subspace of functions and so it shows that the cokernel of $r_1 -r_2$ is infinite dimensional. This implies that the dimension of $H^1(\mathbb T^2, \SJ)$ is infinite.

In the case of Geometric Quantization, we assume that $\omega=p dx\wedge d\theta$, for a fixed positive integer $p$. We fix two different trivialization sections $\sigma_U$ and $\sigma_V$ over the regions $U$ and $V$. Without loss of generality we assume that the circle $S^1 \times \{ 0 \}= S_0$ is a Bohr-Sommerfeld submanifold (for the horizontal polarization) with fixed global section over it. They are defined by the condition that they coincide over $S_0$ with the Bohr-Sommerfeld section and that they are defined by parallel vertical transport (upwards for $\sigma_U$ and downwards for $\sigma_V$). They satisfy the gluing equation
\begin{equation}
\sigma_U(x, 0.5)= \sigma_V(x, 0.5) e^{2\pi i p x}. \label{eq:gluing}
\end{equation}
With respect to any of these two trivializing sections the connection of the prequantizable bundle is written as
\begin{equation}
\nabla = d +2\pi i p \theta dx. \label{eq:connec}
\end{equation}
Use the following notation
\begin{enumerate}
\item $s_U(x, \theta)=f_U(x, \theta) \cdot \sigma_U(x, \theta)$,
\item $s_V(x, \theta)=f_V(x,\theta) \cdot \sigma_U(x, \theta)$,
\item $s_2(x, \theta)=f_2(x, \theta) \cdot \sigma_U(x, \theta)$,
\item $s_1(x, \theta)=f_1(x,\theta) \cdot \sigma_U(x, \theta)$.
\end{enumerate}
Dividing by $\sigma_U(x, 0.5)$ on both sides of equation (\ref{eq:im1}) we obtain
$$
f_{2}(x, 0.5)= f_U(x, 0.5) -f_V(x, 0.5).
$$
Now, by using the connection formula (\ref{eq:connec}) over $U$ we obtain, by parallel transport along the leaves, the following formula
\begin{eqnarray*}
f_U(x+ \eta \theta, \theta) &= & f_U(x,0) \cdot e^{2\pi i p \theta/\eta}.
\end{eqnarray*}
Thus, we obtain
\begin{eqnarray}
f_U(x,0) &= & f_U(x+ 0.5\eta, 0.5)\cdot e^{-\pi i p \theta/\eta}. \label{eq:changeU}
\end{eqnarray}
Observe that
$$
f_V(x,\theta)= \frac{s_V(x,\theta)}{\sigma_U(x,\theta)} = \frac{s_V(x,\theta)}{\sigma_V(x,\theta)}\frac{\sigma_V(x,\theta)}{\sigma_U(x,\theta)}= \tilde{f}_V(x, \theta) e^{-2\pi i p x},
$$
where $\tilde{f}_V(x, \theta)= \frac{s_V(x,\theta)}{\sigma_V(x,\theta)}$. It satisfies the equation, by a computation analogous to the one providing (\ref{eq:changeU}),
\begin{eqnarray}
\tilde{f}_V(x,0) &= & \tilde{f}_V(x- 0.5\eta, 0.5)\cdot e^{\pi i p /\eta}. \label{eq:changeV}
\end{eqnarray}

Therefore, after quotienting by $\sigma_U$,  equation (\ref{eq:im2})  becomes,
\begin{equation*}
f_1(x,0)= f_U(x,0) - f_V(x,0),
\end{equation*}
that is, by substituting in the previous equations
\begin{equation}
f_1(x,0)= f_U(x+ 0.5\eta, 0.5)\cdot e^{-\pi i p/\eta} - f_V(x- 0.5\eta, 0.5)\cdot e^{\pi i p /\eta}\cdot e^{2\pi i p x},
\label{eq:im4}
\end{equation}

We have smooth functions:
\begin{eqnarray*}
 \hat{w} & = & f_2(x,0.5), \\
w & = & f_1(x,0), \\
u & = & f_U(x, 0.5), \\
v & = & f_V(x, 0.5).
\end{eqnarray*}
They completely recover the initial four sections $s_U, s_V, s_1$ and $s_2$.
Expanding the Fourier coefficients of them and substituting them into the equations (\ref{eq:im1}) and (\ref{eq:im4}) we obtain the sequence of systems of equations
\begin{equation*}
\left\{ \begin{array}{ccc} \hat{w_k} & = & u_k -v_k \\ w_k & = & e^{-\pi i (k \eta +p/\eta)}u_k - e^{\pi i (k \eta +p/\eta)}v_{k-p} \end{array} \right. \, ,k\in \Z
\end{equation*}
Substituting the first equation in the second and simplifying we obtain
\begin{equation}
w_k - e^{-\pi i (k \eta +p/\eta)}\hat{w}_k = e^{-\pi i (k \eta +p/\eta)} v_k - e^{\pi i (k \eta +p/\eta)}v_{k-p}, k\in \Z.
\label{eq:compu}
\end{equation}
Let us compute a particular case. Assume that $\hat{w}=0$ and $w=1$. Therefore $w_k= \delta_{0k}$. This immediately implies that
$$
v_q= 0,$$
for $q \not \cong 0 \mod p$. For the multiples of $p$, we can choose any $v_0\in \C$. Substituting in (\ref{eq:compu}) for $k=0$, we obtain the value of $v_{-p}$. Using again equation (\ref{eq:compu}) for $k>0$, it is simple to check that
$$
|v_{k\cdot p}|=|v_0|,
$$
for $k>0$. Substituting for negative values we obtain
$$
|v_{k\cdot  p}|=|v_{-p}|,
$$
for $k< -1$. Therefore, at least one of the two limits of $v_k$ does not converge to zero. This implies by Theorem \ref{thm:Graf} that any of the solutions (one for each choice of $v_0$) of equation (\ref{eq:compu}) produces a function $v$ that is not smooth.

The previous choice of $w$ and $\hat{w}$ can be slightly perturbed to produce examples of pairs $(w,\hat{w})$ such that the associated solution $(u,v)$ is not a smooth pair. This shows that the dimension of the space $H^1(\mathbb{T}^2, \SJ)$ is infinite.
\end{proof}

\begin{remark}
Most certainly a similar argument yields the infinite dimensionality of the Geometric Quantization space for the case of a general symplectic form. It is just a matter of complicating the formulae, but the basic non-decaying behavior of the Fourier coefficients is probably kept.
\end{remark}

\section{Applications III: Geometric Quantization of general foliations over the $2$-torus} \label{sec:torus}

In this section we deal with the case of general real polarizations given by a non--singular flow on the torus. For this we first recover the topological classification and then apply the previous functorial properties to compute its Geometric Quantization. In the way, we also obtain some results for foliated cohomology associated to this foliation.
\subsection{The topological classification}
We recall here the topological Denjoy-Kneser \cite{denjoy,kneser,Re62} classification, up to topological equivalence, of line fields $\SF$ on the torus. Let us point out that the classifications that we use here is the $\mathcal{C}^{0}$ classification and not the $C^{1}$ classification. For instance the so-called Denjoy foliation (see for instance \cite{denjoyexample})\footnote{ The Denjoy example has been a key example in the theory of foliations. For instance, it was used by Paul Schweitzer to give a counterexample to the Seifert conjecture for closed orbits for $\mathcal{C}^1$ flows on $\mathbb S^3$ and indeed on any three dimensional manifold (see \cite{schweitzer}).} is not diffeomorphic to the irrational flow.  We follow the ideas and notation as presented in \cite{Re62}. A line field of the torus is completely determined by a map $L_{\SF}:\mathbb T^2 \to \RP^1$, i.e. we are trivializing the tangent bundle and taking its projectivization so each line becomes a point. If the line field is oriented then the maps lifts to $\hat{L}_{\SF}: \mathbb T^2 \to S^1$.

Recall that the standard obstruction theory establishes that the homotopy type of the line field, as a distribution, is determined by an obstruction class provided by
$$\lambda \in H^1(\mathbb T^2, \pi_1(\RP^1))=H^1(\mathbb T^2, \Z).$$
Fix a basis of $H_1(\mathbb T^2, \Z)= \Z^2$ provided by two elements represented as loops $\gamma_1$ and $\gamma_2$ that intersect transversely at the point $(0,0)\in \mathbb T^2 = \R^2 / \Z^2$. The maps $L_{\gamma_i}: S^1 \to S^1$, $i=1,2$, have  integer degrees $d_1$ and $d_2$ and $\lambda([\gamma_i])= d_i$. Therefore the homotopy classes of line fields are represented by a pair of integers that are the degrees of the restrictions of the map $L$ to a positive basis of $H_1(\mathbb T^2, \Z)$.  Moreover, it is obvious that the distribution is orientable if the degrees are even. Observe that the integers depend on the trivialization chosen on the torus, but for each fixed trivialization the classes run over all the possible pairs of integers.

The following lemma holds,

\begin{lemma}
A regular foliation $\SF$ over the torus $\mathbb T^2$ always admits a ``standard'' metaplectic correction.\end{lemma}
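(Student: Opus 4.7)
The plan is to show that the complex determinant bundle $N$ attached to $\SF$ is trivial as a complex line bundle; then the ``standard'' metalinear correction is simply the trivial bundle $\mathbb T^2 \times \C$. By the criterion recalled in Section~2, it suffices to produce $e \in H^2(\mathbb T^2,\Z)$ with $2e = c_1(N)$.

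First I would observe that, since the foliation is one--dimensional, its tangent distribution $T\SF$ is a real line bundle, and hence
$$ N \;=\; \bigwedge^{1}_{\C}(T\SF \otimes_{\R} \C) \;=\; T\SF \otimes_{\R} \C $$
is the complexification of a real bundle. Fibrewise complex conjugation on the $\C$--factor is a $\C$--antilinear bundle automorphism of $N$, which is the same data as a complex--linear isomorphism $N \cong \overline{N}$. Taking first Chern classes yields
$$ c_1(N) \;=\; c_1(\overline{N}) \;=\; -c_1(N), $$
so $2\,c_1(N) = 0$ in $H^2(\mathbb T^2,\Z)$.

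The next step is to invoke the fact that $H^2(\mathbb T^2,\Z) \cong \Z$ is torsion--free. This immediately forces $c_1(N) = 0$, and hence $N$ is isomorphic to the trivial complex line bundle (complex line bundles on a CW complex being classified by their first Chern class). The element $e = 0 \in H^2(\mathbb T^2,\Z)$ then satisfies $2e = c_1(N)$, and setting $N^{1/2} := \mathbb T^2 \times \C$ provides the desired ``standard'' metalinear correction.

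There is no real obstacle in this argument. The only mildly delicate point is the passage from $2\,c_1(N) = 0$ to $c_1(N) = 0$, which relies on the absence of $2$--torsion in $H^2(\mathbb T^2,\Z)$; on a surface with $2$--torsion in $H^2$ this step could fail and one would have to work directly with the Stiefel--Whitney class $w_1(T\SF) \in H^1(\mathbb T^2,\Z/2)$ classifying $T\SF$, but on the torus the conclusion is immediate.
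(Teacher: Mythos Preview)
Your argument is correct and reaches the same conclusion as the paper by the same essential mechanism: the Chern class $c_1(N)$ of the complexified tangent line bundle is $2$--torsion, and $H^2(\mathbb T^2,\Z)\cong\Z$ has no torsion, so $c_1(N)=0$ and the trivial bundle serves as $N^{1/2}$.

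The packaging differs slightly. The paper obtains $c_1(N)=0$ via the Bockstein long exact sequence associated to $0\to\Z\stackrel{\cdot 2}{\to}\Z\to\Z_2\to 0$, using that the Bockstein sends $w_1(T\SF)$ to $c_1(T\SF\otimes_{\R}\C)$ and that multiplication by $2$ on $H^2(\mathbb T^2,\Z)$ is injective. You instead use the elementary fact that a complexified real bundle is isomorphic to its own conjugate, giving $c_1(N)=-c_1(N)$ directly. Your route avoids invoking the identification of the Bockstein image with $c_1$ of the complexification, at the cost of not making the link to $w_1$ explicit; the paper's version, conversely, situates the computation within the general characteristic--class framework. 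Either way the crux is the absence of $2$--torsion in $H^2(\mathbb T^2,\Z)$, which you identify correctly.
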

\begin{proof}
The real line bundles $\SF$ over $\mathbb T^2$ are completely classified by the first Stieffel-Withney class $w_1(\SF)\in H^1(\mathbb T^2, \Z_2)= (\Z_2)^2$. Therefore, there exist $4$ of them. We have the following exact sequence
$$ 0 \to \Z \stackrel{\cdot 2}{\to} \Z \to \Z_2 \to, 0 $$
that induces a long exact sequence
$$ \cdots H^1(\mathbb T^2, \Z) \to H^1(\mathbb T^2, \Z_2) \stackrel{\bf b}{\to} H^2(\mathbb T^2, \Z) \stackrel{\bf i}{\to} H^2(\mathbb T^2, \Z)\cdots, $$
where the map $\bf b$ is the Bockstein morphism that maps the first Stieffel-Whitney class of a real line bundle $w_1(\SF)$ to the first Chern class of its complexification $c_1(\SF \otimes_{\R} \C)$. The metaplectic correction gives a choice of square root of $\SF\otimes \C$. So, we just need to have $c_1(\SF \otimes_{\R} \C)\in H^2(\mathbb T^2, \Z)= \Z$ of even degree. But since ${\bf i}= 2\cdot id$ is injective, we obtain $c_1(\SF \otimes_{\R} \C)=0$. Therefore we have $\Lambda^1 (\SL \otimes_{\R} \C)= \C$ and the trivial bundle can be chosen as bundle of half-forms, i.e. the square root of the trivial line bundle is itself. Therefore the prequantizable bundle remains the same after the metaplectic correction and we call that choice the {\it standard} one.
\end{proof}

Now assume that there is a closed leaf of the line field $\SL$. It represents an element $[\SL]$ of $H_1(M, \Z)$. It is obvious that $\lambda([\SL])=0$. Any other closed leaf $\SL'$ represents the same homology class since it cannot intersect $\SL$ (since different orbits are disjoint) and by Poincar\'e-Bendixon Theorem cannot be null-homologous. Therefore, there are always homology classes which are not represented by closed leaves.

For any homology class $A\in H_1(M, \Z)$, we have the following result that is proved using  Sard's lemma.
\begin{lemma}
Fix $A\in H^2(M, \Z)$. There exists an embedded smooth loop $\gamma:S^1\to \mathbb T^2$ representing the class such that the number of tangencies of $\gamma$ and $\SF$ is finite.
\end{lemma}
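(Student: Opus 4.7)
The plan is to take any smooth embedded loop $\gamma_0: S^1 \to \mathbb T^2$ representing $A$ (interpreting the class as living in $H_1(\mathbb T^2,\Z)$; primitive classes admit such embedded representatives, e.g.\ via straight lines of appropriate rational slope in $\R^2/\Z^2$) and then to perturb it via a one-parameter jet-transversality argument so that its tangencies with $\SF$ become isolated, hence finite by compactness of $S^1$.

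First I would encode tangency as a codimension-one condition. Since $T\mathbb T^2$ is trivial, the line field $\SF$ is described by a smooth map $L_\SF: \mathbb T^2 \to \RP^1$, and the projectivized derivative of a smooth immersion $\gamma: S^1 \to \mathbb T^2$ gives a smooth map $\tau_\gamma: S^1 \to \RP^1$. Tangency of $\gamma$ to $\SF$ at $t$ is the equation $\tau_\gamma(t)= L_\SF(\gamma(t))$, equivalently the vanishing of the difference map $\psi_\gamma: S^1 \to \RP^1$ defined (in a local chart of $\RP^1$ around the common value) by $\psi_\gamma(t)=\tau_\gamma(t)-L_\SF(\gamma(t))$. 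If $\psi_\gamma$ is transverse to the point $0\in\RP^1$, then since both spaces are one-dimensional $\psi_\gamma^{-1}(0)$ is a $0$-dimensional submanifold of the compact $S^1$, thus finite.

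Next I would set up a parametric transversality argument. Choose a tubular neighbourhood of $\gamma_0$ identified with $S^1\times(-\e,\e)$ and consider the family of loops $\gamma_s(t)=(t,s(t))$ for $s$ ranging in a sufficiently ample space $\SU$ of $C^\infty$-small sections (it suffices to take a truncated Fourier basis of sections of the trivial normal bundle, weighted against a partition of unity). Form the joint map
\begin{equation*}
\Psi: S^1 \times \SU \longrightarrow \RP^1, \qquad \Psi(t,s)=\psi_{\gamma_s}(t).
\end{equation*}
By computing the derivative of $\Psi$ in the $\SU$-direction at a fixed $t_0$ one sees that altering the first jet of $s$ at $t_0$ changes $\tau_{\gamma_s}(t_0)$ freely without moving the evaluation point $\gamma_s(t_0)$ appreciably, so $\Psi$ is submersive onto $\RP^1$. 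Sard's lemma applied to the projection $\Psi^{-1}(0)\to\SU$ then yields a residual set of parameters $s\in\SU$ for which $0\in\RP^1$ is a regular value of $\psi_{\gamma_s}$, and therefore for which the tangency set $\psi_{\gamma_s}^{-1}(0)$ is finite. Picking such an $s$ with $\|s\|_{C^1}$ small enough keeps $\gamma_s$ embedded and isotopic to $\gamma_0$, hence representing the same class $A$.

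The main obstacle is the verification of submersivity of $\Psi$: one must produce, at each $(t_0,s_0)$, an infinitesimal perturbation whose effect on $\psi_{\gamma_{s_0}}(t_0)$ is nonzero. This is a standard jet-level calculation --- local perturbations of the curve that modify $\gamma'(t_0)$ while leaving $\gamma(t_0)$ essentially fixed realize arbitrary infinitesimal changes in $\tau_{\gamma_s}(t_0)$ --- but it is the only nontrivial input and all subsequent steps are formal consequences of parametric Sard together with compactness of $S^1$.
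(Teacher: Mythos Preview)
Your proposal is correct and follows exactly the route the paper indicates: the paper's proof consists of the single phrase ``proved using Sard's lemma'', and what you have written is precisely the standard unpacking of that phrase as a parametric (jet-)transversality argument. Your observation that only primitive classes admit embedded representatives is a careful point the paper glosses over, but in context they only apply the lemma to the class $(1,0)$ anyway.
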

Consequently, we can minimize that number obtaining the following,
\begin{definition}
A minimal contact curve $\gamma$ for a class $A\in H_1(M,\Z)$, not represented by closed curves, is a smooth curve that minimizes the number of tangencies with $\SF$.
\end{definition}
We obtain
\begin{corollary}
The leaf $\SL_0$ of $L$ at a tangency point $t_0$ of a curve of minimal contact does not cross the curve.
\end{corollary}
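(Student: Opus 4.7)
The plan is to argue by contradiction. Suppose the leaf $\SL_0$ crosses the curve $\gamma$ at the tangency point $t_0$; I will construct a smooth perturbation $\tilde\gamma$ of $\gamma$, supported in an arbitrarily small neighborhood of $t_0$, that has strictly fewer tangencies with $\SF$ and lies in the same homology class as $\gamma$, contradicting the minimality assumption.

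First I would pick a foliated chart $(U, x, y)$ centered at $t_0$ in which $\SF$ restricts to the horizontal foliation and $\SL_0 \cap U = \{y = 0\}$. Shrinking $U$ if necessary, one may assume that $t_0$ is the only tangency of $\gamma$ with $\SF$ inside $U$, that $\gamma \cap U$ is a single embedded arc, and that in these coordinates this arc is the graph of a smooth function $y = y(x)$ on $(-\epsilon, \epsilon)$ with $y(0) = 0$, $y'(0) = 0$, and $y'(x) \neq 0$ for $0 < |x| < \epsilon$. The crossing hypothesis for $\SL_0$ and $\gamma$ translates into the statement that $y$ changes sign at $0$, which, together with the absence of other critical points, forces $y$ to be strictly monotone on $(-\epsilon, \epsilon)$ with a single degenerate critical point at the origin.

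Next I would obtain the perturbation by adding a linear tilt localized near $0$: replace $y(x)$ by $\tilde y(x) = y(x) + \delta\,\chi(x)\,x$, where $\chi$ is a smooth cutoff equal to $1$ near $0$ and supported in $(-\epsilon, \epsilon)$, and the sign of $\delta$ agrees with the monotonicity of $y$. For $|\delta|$ small enough, $\tilde y'(x)$ stays bounded away from zero everywhere in $(-\epsilon, \epsilon)$: outside a neighborhood of $0$ this uses the uniform lower bound on $|y'|$, while near $0$ the dominant contribution comes from $\delta\,\chi(0)$. The resulting curve $\tilde\gamma$ agrees with $\gamma$ outside $U$, is transverse to $\SF$ inside $U$, and is $C^\infty$-close to $\gamma$, so it remains smooth, embedded, and smoothly isotopic to $\gamma$. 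Hence $\tilde\gamma$ represents the same homology class as $\gamma$ but has at least one fewer tangency with $\SF$, contradicting the minimality of $\gamma$.

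The main point to verify will be that the perturbation does not introduce new tangencies in the transition region where $\chi$ drops from $1$ to $0$, and that $\tilde\gamma$ stays embedded; both follow from the $C^1$-smallness of the modification together with the facts that $\gamma$ is already transverse to $\SF$ near $\partial U$ and has no self-intersections. The geometric content of the argument is that a sign change of $y$ at a critical point can always be killed by a small linear tilt, whereas a non-crossing tangency of type $y = x^2$ is stable under small perturbations—which is precisely why the corollary is formulated as an absence of crossings.
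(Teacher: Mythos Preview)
Your proof is correct and follows the same contradiction strategy as the paper: write $\gamma$ locally as a graph $y=y(x)$ in a foliated chart, use the crossing hypothesis to see that $y$ is monotone through the tangency, and then perturb to remove the tangency. The paper simply asserts that ``it is simple to locally perturb $f$ to a new $g$'' and points to a picture, whereas you make the perturbation explicit via the linear tilt $\tilde y(x)=y(x)+\delta\,\chi(x)\,x$ and verify that no new tangencies are created in the cutoff region; this extra care is welcome but does not change the underlying argument.
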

\begin{proof}
We can choose a small chart around $x_0$ such that $t_0=(0,0)\in \R^2$, the distribution $L$ is locally given by the equation \{ y=const \} and the curve $\gamma:(-\epsilon, \epsilon) \to \R^2$ is written as
$$ \gamma(t)= (t, f(t)),$$
with $f(0)=f'(0)=0$ and moreover we assume, by hypothesis, that $f(t)$ is increasing in a neighborhood of $t=0$. Then, it is simple to locally perturb (see figure \ref{fig:deform}) $f$ to a new $g$ such that $\hat{\gamma}(t)=(t,g(t))$ has no tangency points in the neighborhood of $(0,0)$. This implies that $\hat{\gamma}$ has less tangencies than $\gamma$ and therefore the initial $\gamma$ was not a minimal contact curve.
\end{proof}

\begin{figure}[ht]
\includegraphics[scale=0.3]{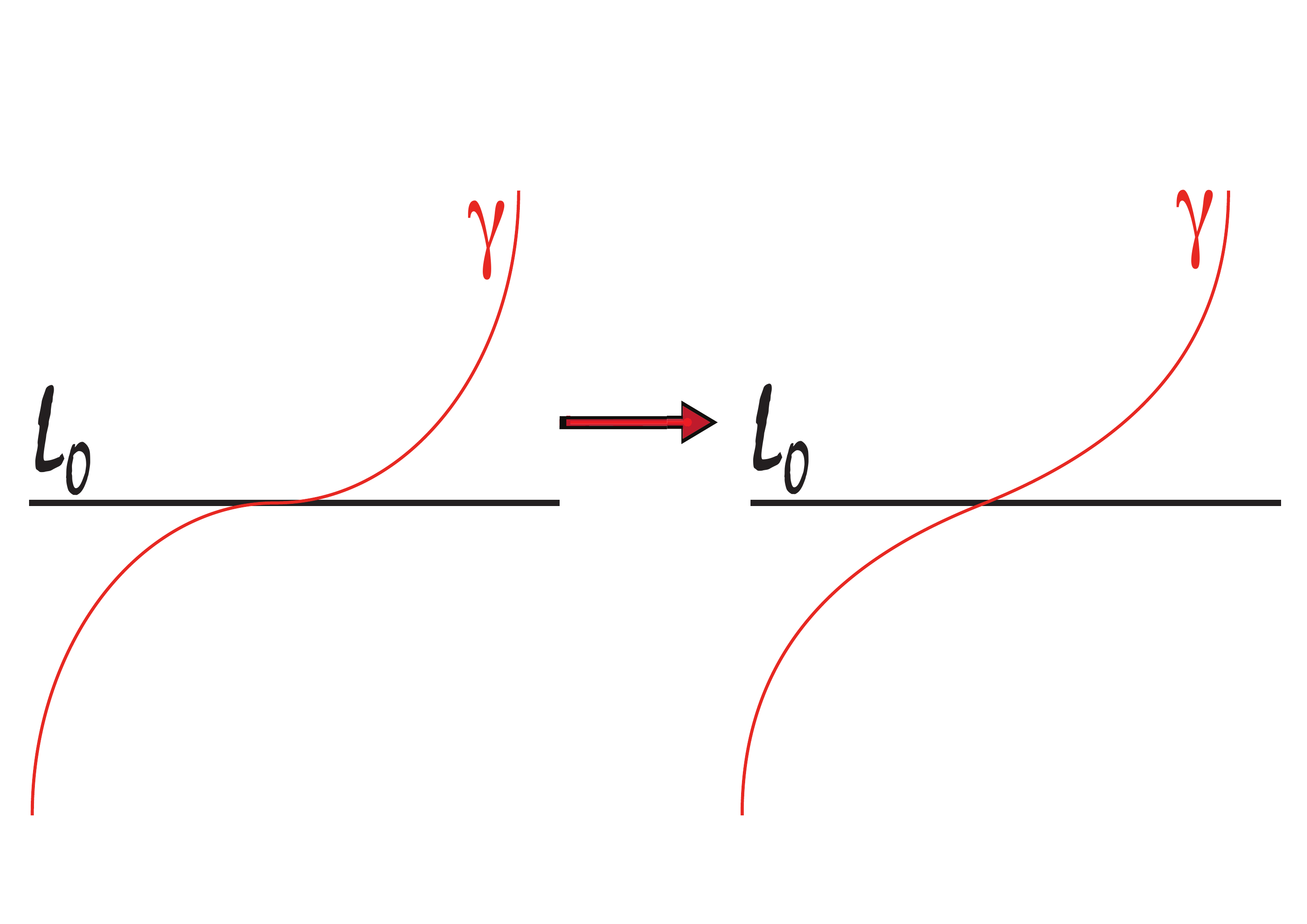}
\caption{Deformation of $\gamma$ such that $\hat{\gamma}(t)=(t,g(t))$ has no tangency points.}\label{fig:deform}
\end{figure}

We say that a tangency of a minimal contact curve $\gamma$ is positive if the map $L_{\gamma}:S^1 \to S^1$ is increasing at the tangency point, it is negative if it is decreasing. Assume that the homology class representing the closed leaves is the class $(0,1)\in \Z^2=H_1(\mathbb T^2, \Z)$. This can be always satisfied by composing the $\mathbb T^2 = \R^2/\Z^2$ torus by an element in $SL(2, \Z)$. The minimal contact curve can be assumed to lie in the class $(1,0)$. Now, any open leaf is diffeomorphic to the real line. It is known that the semiline (semiorbit) of the open leaf is asymptotically tangent to a closed orbit. Therefore, denoting by $\gamma:\R \to \mathbb T^2$ a parametrization of the line and $\bar{\gamma}=(\bar{\gamma}_1, \bar{\gamma}_2): \R \to \R^2$ the canonical lift to the universal cover, then the following limit exists
$$
\lim_{t \to \infty} \bar{\gamma}_2(t), $$
and it is either $+\infty$ or $-\infty$. We say that the semiorbit is positive if the limit is $+\infty$ and negative otherwise.

We obtain,
\begin{theorem}[Proposition 3 in \cite{Re62}] \label{thm:Rei}
Let $\Gamma$ of minimal contact have $\mu^+$ positive tangencies and $\mu^-$ negative tangencies. Then the leaves of the foliation can be described as follows:
\begin{enumerate}
\item There are $\mu^+$ regions bounded by a pair of (possibly not distinct) closed leaves, such that all the leaves interior to this region are open and have both semiorbits negative.
\item There are $\mu^-$ regions with both semiorbits positive.
\item There are some other regions (at most numerable in number) in which the two semiorbits of a given orbit have opposite signs.
\item If the complement of these regions is not the whole space, it is composed entirely of closed leaves.
\item If the complement is the whole space either all the leaves are closed, or all are dense.
\end{enumerate}
\end{theorem}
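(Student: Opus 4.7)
My proposed approach combines a local analysis at each tangency of $\Gamma$ with a Poincar\'e--Bendixson argument on the complement of the closed leaves. The starting point is the preceding Corollary: at any tangency $t_0 \in \Gamma$, the leaf $\SL_0$ through $t_0$ does not cross $\Gamma$, so in a flow-box chart near $t_0$ the curve $\Gamma$ lies entirely on one side of $\SL_0$. I would then argue that the sign of the tangency controls which side this is, and hence dictates the sign of the semiorbits of the nearby open leaves: at a positive tangency both semiorbits of each open leaf in the local ``lens" are negative, while at a negative tangency both are positive. This uses the fixed normalization (closed leaves in homology class $(0,1)$, $\Gamma$ in class $(1,0)$) together with the monotonicity of $L_\Gamma$ at $t_0$.

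The second step is global. Denote by $\mathcal Z \subset \mathbb T^2$ the union of closed leaves of $\SF$; because all closed leaves represent the same primitive homology class, $\mathcal Z$ is a closed $\SF$-saturated subset, and each connected component $U$ of $\mathbb T^2 \setminus \mathcal Z$ is an open annulus bounded by one or two closed leaves. Poincar\'e--Bendixson applied to $\overline U$ then shows that the $\omega$- and $\alpha$-limit of every orbit in $U$ must be one of the boundary closed leaves, since by construction there is no closed orbit inside $U$. I would next argue that each local lens from Step~1 extends by $\SF$-saturation to a full component of $\mathbb T^2\setminus \mathcal Z$: a positive tangency yields a component in which every open leaf has both semiorbits negative (case \textbf{(1)}), and dually for negative tangencies (case \textbf{(2)}). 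The remaining components have tangency-free intersection with $\Gamma$, and in these the two semiorbits of each orbit necessarily have opposite signs, giving case \textbf{(3)}; their number is at most countable because each contains at least one crossing of $\Gamma$, and $\Gamma$ is compact.

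To count regions I would verify that every region of types \textbf{(1)} and \textbf{(2)} is detected by some tangency and that distinct tangencies yield distinct regions; this follows from the minimality of $\Gamma$, since an undetected region would allow $\Gamma$ to be perturbed inside that region so as to reduce the tangency count, contradicting the definition of minimal contact curve. If the union of the regions of types \textbf{(1)}--\textbf{(3)} is not the whole torus, its complement is automatically $\SF$-saturated and contains no open leaf, so consists entirely of closed leaves: this is case \textbf{(4)}. Finally, case \textbf{(5)} is obtained by noting that if $\mathcal Z = \mathbb T^2$ then the foliation is by closed circles, while if $\mathcal Z = \emptyset$ the Denjoy--Kneser classification recalled in Section~\ref{irrational} forces all leaves to be dense.

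The main obstacle is the global matching in Step~3: promoting the lens produced by the local analysis into a complete connected component of $\mathbb T^2 \setminus \mathcal Z$ and establishing a bijection between tangencies of a given sign and the corresponding annular regions. The delicate part is to rule out, using the minimality of $\Gamma$, the appearance of extraneous closed leaves inside a lens and to exclude ``ghost" regions that would go undetected by $\Gamma$; once this bookkeeping is in place, the enumeration by $\mu^+$ and $\mu^-$ becomes tautological and the remaining statements follow by inspecting boundary behaviour in each annulus.
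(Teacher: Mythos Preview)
The paper does not prove this theorem: it is quoted verbatim as Proposition~3 of Reinhart~\cite{Re62} and stated without proof, so there is nothing in the paper to compare your proposal against. The authors use it as a black box to set up the structure of generic foliations on the torus (Corollary~\ref{coro:picture}).

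That said, your outline follows the standard strategy one would expect for Reinhart's result and is broadly sound: the local lens analysis at tangencies, the saturation argument using Poincar\'e--Bendixson on the complement of the closed leaves, and the bookkeeping matching tangencies to annular components. One caution on case~\textbf{(5)}: you invoke the Denjoy--Kneser classification to conclude that if $\mathcal Z=\emptyset$ then all leaves are dense, but as the paper itself notes (see the discussion of the Denjoy example at the start of Section~\ref{sec:torus}), this step requires a regularity hypothesis---Denjoy's theorem needs the foliation to be at least $C^2$, and there are $C^1$ counterexamples with an exceptional minimal set. Reinhart's original argument handles this point differently, so if you want a self-contained proof you should either impose a smoothness assumption or argue directly from the absence of tangencies that the first-return map on $\Gamma$ is a circle diffeomorphism with the appropriate minimality property.
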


A foliation is called {\emph{generic}} if the linear monodromy of the closed leaves is non-degenerate (not the identity), i.e. the linearized Poincar\'e return map is of the form $p(t)= \lambda t$, with $\lambda>0$ and different from $1$. This, in particular, implies that the closed leaves are isolated and stable under $C^1$-small perturbations. The Theorem \ref{thm:Rei} restricts to this particular case as follows. The regions described in the Theorem behave as follows for a generic foliation:
\begin{itemize}
\item The regions described in the points $(1)$, $(2)$ and $(3)$ are finite in number,
\item the other regions do not exist.
\end{itemize}

There is a local linearization Theorem for the neighborhood of a closed leaf with non-degenerate linear monodromy.
\begin{proposition}
Let $U \simeq S^1 \times (-\epsilon, \epsilon)$ a neighborhood of a non-degenerate closed leaf $\SF  \simeq S^1 \times \{ 0 \}$ of a foliation $\SL$, then there is a smaller neighborhood $\SF \subset V \subseteq U$ and a diffeomorphism $\phi: V \to S^1 \times (-\delta, \delta)$, for some $\delta>0$, such that,
\begin{enumerate}
\item $\phi^{-1}(S^1 \times \{ 0 \} )= \SF$,
\item fixing coordinates $(\theta,r) \in S^1 \times (-\delta, \delta)$, we have $\phi^*(\ker \{dr + \lambda \cdot r \cdot d\theta \})= \SL$ for some $\lambda\neq 0$.
\end{enumerate}
\end{proposition}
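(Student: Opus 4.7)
The plan is to reduce the problem to a one-dimensional linearization problem on a transverse arc, solve it with a classical Sternberg linearization theorem, and then spread the linearization around $\SF$ using a flow tangent to $\SL$.

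First I pick, on a neighborhood of $\SF$ in $U$, a smooth nowhere-vanishing vector field $X$ tangent to $\SL$, rescaled so that $\SF$ is a periodic orbit of $X$ of period $1$. Let $T \hookrightarrow U$ be a small embedded transverse arc through a point $p_0 \in \SF$, parametrized by $t$ with $p_0 \leftrightarrow 0$. The first-return map $f\colon (T, 0) \to (T, 0)$ along the flow of $X$ is a local $C^\infty$ diffeomorphism with $f(0) = 0$ and $f'(0) = \mu$, where by the non-degeneracy hypothesis $\mu > 0$ and $\mu \neq 1$, so $0$ is a hyperbolic fixed point.

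Next I invoke Sternberg's smooth linearization theorem for $C^\infty$ germs of diffeomorphisms at a hyperbolic fixed point. In dimension one the non-resonance condition $\mu \neq \mu^k$ for all $k \geq 2$ is automatic, since a resonance would force $\mu^{k-1} = 1$, hence $\mu = 1$ (as $\mu > 0$), contradicting hyperbolicity. Sternberg's theorem thus produces a $C^\infty$ diffeomorphism germ $\psi\colon (T,0) \to (\R,0)$ conjugating $f$ to the linear map $r \mapsto \mu r$. Set $\l := -\log \mu$, so that $\l \neq 0$ and $\mu = e^{-\l}$. Shrinking $T$ so that $\psi$ is a genuine diffeomorphism onto $(-\d, \d)$, every point $q$ in a sufficiently small open neighborhood $V$ of $\SF$ can be written uniquely as $q = \Phi^\theta_X(p)$ with $p \in T$ and $\theta \in [0,1)$; define
\[ \phi(q) := \bigl(\theta \bmod 1,\; e^{-\l \theta}\,\psi(p)\bigr) \in S^1 \times (-\d, \d). \]
Well-definedness across the seam $\theta = 0 \sim 1$ follows from $\Phi^1_X(p) = f(p)$ together with $\psi(f(p)) = \mu \psi(p) = e^{-\l}\psi(p)$: both local parametrizations assign the same value after one full loop. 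By construction $\phi$ is a local diffeomorphism sending $\SF$ to $S^1 \times \{0\}$, and it sends each $X$-orbit to a curve $\theta \mapsto (\theta, r_0 e^{-\l \theta})$, which are exactly the leaves of $\ker\{dr + \l r\, d\theta\}$; hence $\phi^*\ker\{dr + \l r\, d\theta\} = \SL$, as required.

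The central technical ingredient is Sternberg's linearization theorem; everything else is a flow-box / suspension construction. The main obstacle to verify is that the formula for $\phi$ is genuinely $C^\infty$ across the seam and not merely continuous, which reduces to the smoothness of $\psi$, the joint smoothness of the flow $\Phi^\theta_X$ in $(\theta, p)$, and the functional equation $\psi \circ f = \mu\cdot \psi$ that makes the two local expressions glue into a single $C^\infty$ map.
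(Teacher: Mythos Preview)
Your approach is essentially the same as the paper's: linearize the one-dimensional first-return map on a transversal, then propagate the linearization around the closed leaf by a suspension/flow-box construction. The paper cites the linearization of contracting germs on the line (Fatou, via Ahern--Rosay) where you invoke Sternberg; in dimension one these coincide, so there is no substantive difference.

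There is one small technical slip you should repair. You normalize $X$ only so that the closed leaf $\SF$ has period $1$, but then write $\Phi^1_X(p)=f(p)$ and claim that every nearby point is uniquely of the form $\Phi^\theta_X(p)$ with $\theta\in[0,1)$. In general the first-return time $\tau(p)$ is a smooth function with $\tau(p_0)=1$ but $\tau(p)\neq 1$ for $p\neq p_0$, so $\Phi^1_X(p)\neq f(p)$ and the parametrization $[0,1)\times T\to V$ is neither injective nor surjective as stated. The standard fix is to multiply $X$ by a positive smooth function so that the return time becomes identically $1$ on a smaller transversal (equivalently, reparametrize each leaf so that the crossing times of $T$ are exactly the integers); this does not change the foliation and makes your gluing equation $\psi(f(p))=e^{-\lambda}\psi(p)$ do the job you want. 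With that adjustment the argument is complete and matches the paper's proof.
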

\begin{proof}
By shrinking $U$ if necessary, we may assume that $\SL$ can be expressed as $\ker \alpha= \ker  \{f\cdot dr +g\cdot d\theta \}$ for some $f,g$ functions on $S^1 \times (-\epsilon, \epsilon)$ with $f>0$. So we assume that the foliation is expressed by the kernel of the form (after suitable re-scaling)
\begin{equation}
\alpha= dr + Hd\theta, \label{eq:non-deg}
\end{equation}
with $H:S^1 \times (-\epsilon, \epsilon) \to \R$, satisfying that $H(\theta, 0)=0$. Moreover, there exists a change of coordinates $\hat{r}= \hat{r}(r)$ such that $\hat{r}(0)=0$ satisfying that the foliation is expressed as the kernel of a new form
\begin{equation}
\hat{\alpha}= d\hat{r} + \hat{H}d\theta, \label{eq:non-deg}
\end{equation}
where the function $\hat{H}$ satisfies the same properties as the previous smooth function $H$. The only difference being that the Poincar\'e's return map associated to the transverse segment $m: (- \hat{r}_0,\hat{r}_0) \to (-\hat{r}_1,\hat{r}_1)$ defined for some small $\hat{r}_0>0$ and $\hat{r}_1>0$ satisfies that is purely linear $m(\hat{r})= c\cdot \hat{r}$, for some $c>0$ and $c\neq 1$. This is proved by using the non-degeneracy condition and the classical Fatou's Lemma on the linearization of contracting germs of diffeomorphisms in the real line \cite{AR95}.

Thus we may assume, without loss of generality, that the Poincar\'e's return map is linear in the initial coordinates provided by the equation (\ref{eq:non-deg}). We can restrict ourselves to the domain $[0,1) \times (-\delta, \delta) \subset S^1 \times (-\delta, \delta)$ with coordinates $(\theta,r)$. We can easily change coordinates in that domain to $(\theta,r)=\phi(\theta,R)= (\theta,r(\theta,R))$ in such a way that $r(0,R)=R$ satisfying that $\ker \phi^* \alpha= dR$. This change of coordinates is given by the solution of the differential equation
$$ \frac{\partial r}{\partial \theta} +H(r,\theta)=0, $$
with initial value $r(R,0)=R$. This provides a unique diffeomorphism $\phi$ that is well defined over $[0,1) \times (-\epsilon, \epsilon)$ for some small $\epsilon>0$. We change the coordinates using the diffeomorphism $(R, \theta)=\Phi(\rho, \theta)= (R(\rho, \theta), \theta)$ with $R(\rho, \theta)=\rho e^{\lambda \theta}$. This implies that $\alpha_{\lambda}=\ker \Phi^* (dR)= d\rho +\lambda \rho d\theta$ that is well-defined in $(\theta, \rho)\in [0,1) \times (-\epsilon', \epsilon')$ for some $\epsilon'>0$. By adjusting $\lambda>0$, we can make the parallel transport along the foliation  coincide with the Poincar\'e's return map. If this is the case  the chart $\Phi \circ \phi$ smoothly extends to a diffeomorphism in $S^1 \times (-\epsilon', \epsilon')$. This completes the proof.
\end{proof}

From now on, the previously constructed neighborhood $V$ of a periodic orbit will be called a non-degenerate annulus. A neighborhood of the zero section of the cotangent bundle $T^*S^1$ with the vertical Lagrangian foliation will be called a cotangent annulus. Recall that its Geometric Quantization has been computed in Lemma \ref{lem:tira}.

\subsection{Geometric Quantization of the torus: The computation}
We can summarize the discussion above in the following,
\begin{corollary} \label{coro:picture}
Let $\SL$ be a generic foliation of the torus, with $N$, $N >0$, closed leaves. Then there exists a finite covering by open sets $\{ V_j \}_{j=1}^{j= N+1}$ such that:
\begin{enumerate}
 \item $V_j$ is diffeomorphic to $S^1 \times (0,1)$,
 \item $V_j \bigcap V_k= \emptyset$, if $j-k \neq \pm 1 \mod N$,
\item $V_j \bigcap V_{j+1}$ is diffeomorphic to a cotangent annulus.
\item If $j \leq N$, $V_j$ is diffeomorphic to a a non-degenerate annulus.
\item $V_{N+1}$ is a cotangent annulus.
\end{enumerate}
\end{corollary}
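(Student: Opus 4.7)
The plan is to combine Theorem \ref{thm:Rei} with the local linearization proposition, assembling the cover from annular normal forms around the closed leaves together with transversal cotangent annuli filling the regions in between.

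First, I would enumerate the $N$ closed leaves $L_1, \ldots, L_N$. By the discussion preceding Theorem \ref{thm:Rei} they are pairwise disjoint and share a common non-trivial homology class (any two closed orbits must be disjoint and homologous, and no closed orbit is null-homologous by Poincar\'e--Bendixson), so they admit a natural cyclic order. Let $C_j$ denote the open cylinder bounded by $L_j$ and $L_{j+1}$ (indices mod $N$). In the generic case, by Theorem \ref{thm:Rei} and the subsequent genericity discussion, each $C_j$ is a finite union of regions of types $(1)$, $(2)$, or $(3)$, with no remaining region of type $(4)$ since closed leaves are isolated.

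Second, for each $L_j$ I would apply the linearization proposition to obtain a non-degenerate annular neighborhood $V_j \cong S^1 \times (-\delta_j, \delta_j)$ on which the foliation reads $\ker(dr + \lambda_j r\, d\theta)$. I would choose the $\delta_j$'s small enough so that the $V_j$'s are pairwise disjoint and, in particular, each pair $V_j,V_{j+1}$ is separated by an annular gap inside $C_j$. This yields property $(4)$.

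Third, I would fill in these gaps by transversal cotangent annuli. Away from the closed leaves any short transversal arc to the foliation is genuinely transverse, and parameterizing nearby leaves by their point of intersection with such a transversal provides a leaf-preserving diffeomorphism that straightens them into the vertical Lagrangian foliation of $T^*S^1$; thickening the transversal produces a cotangent annulus sitting inside $C_j$. The additional open set $V_{N+1}$ of the statement is constructed as one such cotangent annulus placed inside the last of the cylinders, and I then enlarge each $V_j$ slightly so that it reaches into the cotangent annulus of its adjacent cylinder, producing the intersections $V_j \cap V_{j+1}$ (and $V_N \cap V_{N+1}$) diffeomorphic to cotangent annuli as in properties $(3)$ and $(5)$, while property $(1)$ (each $V_j$ diffeomorphic to $S^1 \times (0,1)$) is immediate from the construction.

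The main technical obstacle is to realize both the cotangent-annulus type of the intersections and the disjointness condition $V_j \cap V_k = \emptyset$ for $j-k \not\equiv \pm 1 \pmod{N}$ simultaneously. The first point requires showing that the spiral model $\ker(dr + \lambda r\, d\theta)$ in a region $r \in (a,b)$ with $0 < a < b$ can be conjugated, as a foliated manifold, to the trivial vertical foliation on a cotangent annulus; this is done by reparameterizing the leaves via the first-return map to a short transversal, turning the spirals into short arcs transverse to a circle. The second point reduces to arranging the $\delta_j$'s and the width of $V_{N+1}$ so that no non-adjacent $V_j,V_k$ ever meet, which is possible because the type $(1)$, $(2)$, $(3)$ regions are finite in number and bounded by isolated closed leaves in the generic case.
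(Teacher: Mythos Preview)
Your proposal is correct and follows the same route as the paper, whose proof is literally the one line ``This result is immediate from all the previous considerations (see Figure~\ref{fig:foli}).'' You have simply unpacked that line: use Theorem~\ref{thm:Rei} to see the closed leaves are cyclically ordered and separated by open cylinders, put the linearized non-degenerate annuli around each closed leaf, and observe that overlaps away from the closed leaves are cotangent annuli.

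One point deserves a sharper sentence. You first take the $V_j$ small and disjoint, then ``enlarge each $V_j$ slightly'' so that consecutive ones overlap. This cannot be a slight enlargement: for $V_j$ and $V_{j+1}$ to meet inside the cylinder $C_j$, each must extend across more than half of $C_j$. The linearization proposition as stated only produces a \emph{small} neighbourhood in the model form, so you should say why the model extends. The cleanest argument is that any open annulus containing exactly one closed leaf (with non-degenerate monodromy) is foliated-diffeomorphic to the standard non-degenerate annulus: the Poincar\'e return map on a full transversal has a single hyperbolic fixed point, and the local Fatou linearization propagates to the whole interval via the dynamics (conjugacy $h$ extended by $h(x)=\lambda^{-n}h(f^n(x))$). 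With this in hand you can take each $V_j$ to be $S^1\times(p_{j-1}+\epsilon,\,p_{j+1}-\epsilon)$ for $j=1,\ldots,N$ except that in the last cylinder you shrink $V_N$ and $V_1$ so they do not meet, and insert the cotangent annulus $V_{N+1}$ there. That makes properties (1)--(5) transparent and removes the tension between ``slightly'' and ``overlapping''.
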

This result is immediate from all the previous considerations (see Figure \ref{fig:foli}).
\begin{figure}[ht]
\includegraphics[scale=0.5]{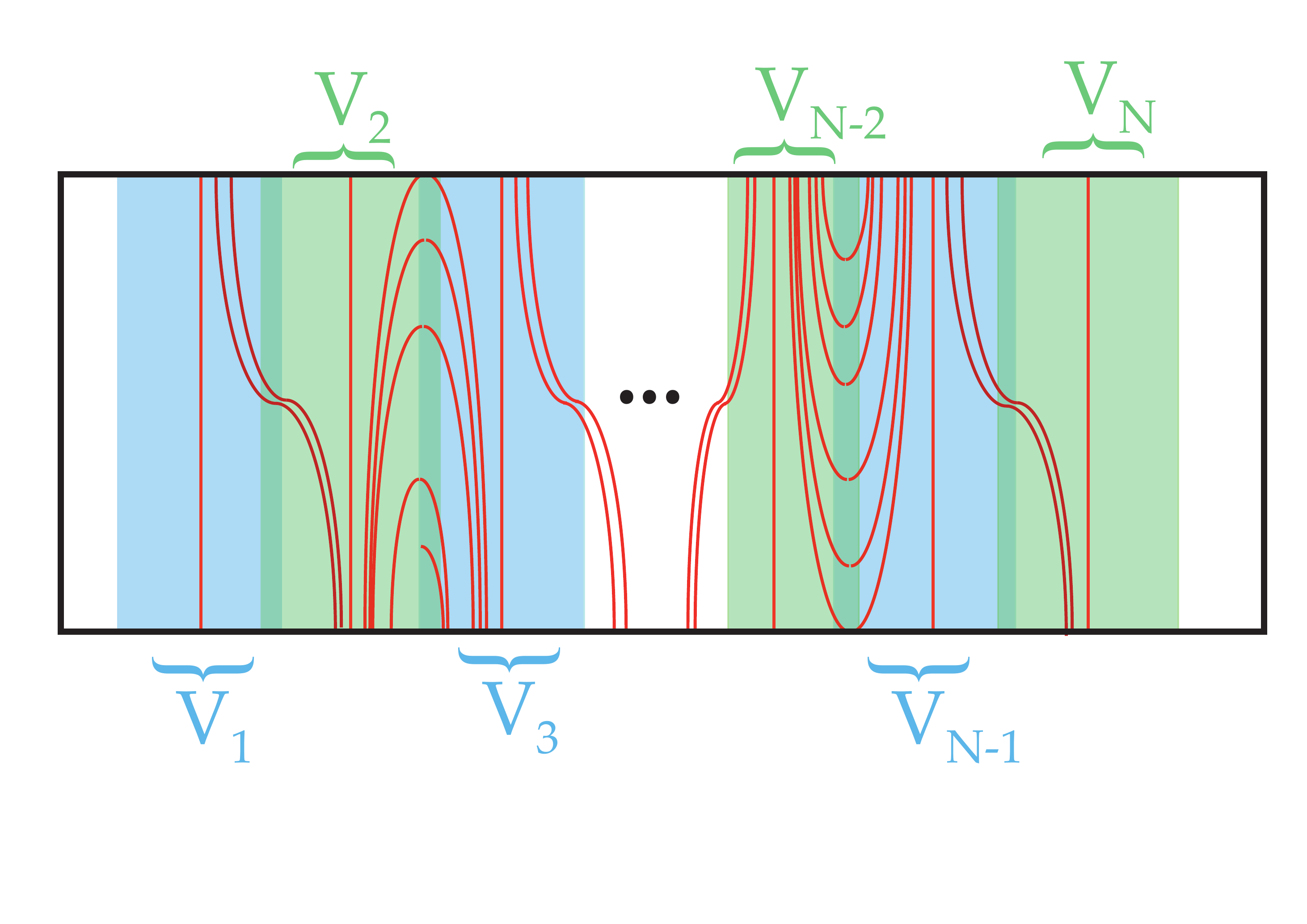}
\caption{Covering by open sets $\{ U_i \}$ of a torus with a generic foliation. The red curves are the leaves.}\label{fig:foli}
\end{figure}

In order to compute the Geometric Quantization and the foliated cohomology of a non-degenerate foliation of the torus, we just need to apply Mayer-Vietoris and the following,
\begin{lemma} \label{lem:generic}
Let $V=S^1 \times (-1,1)$ be a non-degenerate annulus with standard foliation provided by
$$\ker \alpha= \ker \{ dr +\lambda\cdot  r\cdot d\theta \}.$$
Then for any symplectic form (including the limit  foliated cohomology case with $\omega=0$),
\begin{itemize}
\item If the closed leaf is Bohr-Sommerfeld, then $H^0(V, \SJ)=\C$, otherwise $H^0(V, \SJ)=0$.
\item $H^1(V, \SJ)= \C$.
\end{itemize}
\end{lemma}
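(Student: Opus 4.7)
My plan is to adapt the Mayer--Vietoris strategy of Lemma \ref{lem:tira} to the spiral foliation. The role played there by a one-parameter family of closed circles is now taken by the single closed leaf $r=0$; the main novelty is that the obstruction to surjectivity will turn out to be one-dimensional \emph{regardless} of the Bohr--Sommerfeld status, and will be concentrated entirely at the closed leaf.

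First I would cover $V=S^1\x(-1,1)$ by the two open sets
\[
U=\{\theta\in(-0.1,0.6)\}\x(-1,1),\qquad W=\{\theta\in(0.4,1.1)\}\x(-1,1),
\]
with $\theta$ taken modulo $1$, so that $U\cap W$ has two connected components $C_1$ and $C_2$ of $\theta$-widths $(0.4,0.6)$ and $(-0.1,0.1)$. On each of the four pieces $U,W,C_1,C_2$ the leaves are open arcs of spirals, each parametrized by its $r$-value on a vertical transversal, and the leaf space is canonically $(-1,1)$. A Poincar\'e-lemma argument in the spirit of Proposition \ref{prop:coba} then gives $H^0\iso C^\infty((-1,1),\C)$ and $H^i=0$ for $i\geq 1$ on each of them.

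Second, this vanishing together with Corollary \ref{coro:Mayer} identifies
\[
H^0(V,\SJ)=\ker\Phi,\qquad H^1(V,\SJ)=\coker\Phi,
\]
where $\Phi:H^0(U,\SJ)\oplus H^0(W,\SJ)\to H^0(C_1,\SJ)\oplus H^0(C_2,\SJ)$ is the usual difference of restriction maps. To write $\Phi$ explicitly, I would fix parallel trivializing sections of the quantum bundle along the vertical transversals $\theta=0.25$ for $U$, $\theta=0.75$ for $W$, and $\theta=0.5$, $\theta=0$ for $C_1$, $C_2$. Denoting by $u(r),v(r)\in C^\infty((-1,1),\C)$ the corresponding trivializations of flat sections, parallel transport along the spiral leaves gives
\[
(\Phi(u,v))_{C_j}(r)=u(r\,e^{\sigma_j\lambda/4})\,H^U_j(r)-v(r\,e^{-\sigma_j\lambda/4})\,H^W_j(r),\qquad j=1,2,
\]
with $\sigma_1=1$, $\sigma_2=-1$, and smooth holonomy factors $H^U_j,H^W_j$.

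Finally, I would analyze this system. Eliminating $u$ between the two equations yields a functional equation of the form
\[
v(r\,e^{\lambda/2})=M(r)\,v(r\,e^{-\lambda/2})+G(r;\psi_1,\psi_2),
\]
with $M,G$ smooth; the shift $r\mapsto r\,e^{\pm\lambda/2}$ has $r=0$ as a hyperbolic fixed point, and contracts the two half-annuli towards the closed leaf. On each side $r>0$ and $r<0$ the recursion can therefore be solved iteratively, with smoothness of the resulting $v$ (and then of $u$) at $r=0$ controlled by the exponential decay of the shifts $r\,e^{-n\lambda/2}$, in the same Fourier-analytic spirit as in Section \ref{irrational}. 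The single obstruction that survives is concentrated at $r=0$: restricting the two original equations to the closed leaf gives a $2\x 2$ linear system in $(u(0),v(0))$ whose determinant is proportional to $1-e^{-i\oint_{r=0}A}$, and this quantity vanishes precisely in the Bohr--Sommerfeld case. From this both statements follow: in the Bohr--Sommerfeld case the $2\x 2$ system at $r=0$ has a one-dimensional kernel that extends through the recursion to a one-parameter family in $\ker\Phi$, giving $H^0(V,\SJ)=\C$; otherwise $\ker\Phi=0$. In both cases exactly one linear condition on the values of $(\psi_1,\psi_2)$ at $r=0$ obstructs surjectivity, yielding $H^1(V,\SJ)=\C$.

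The main technical obstacle I anticipate is verifying the uniform smoothness of the iteratively constructed $(u,v)$ across the closed leaf independently of the Bohr--Sommerfeld dichotomy, since in the non-Bohr--Sommerfeld case the products $\prod_n H(r\,e^{-n\lambda/2})$ do not individually converge. I would handle this as in Section \ref{irrational}, using the rapid decay of all $r$-derivatives of the holonomy factors at the closed leaf together with the contracting dynamics of the shift, so that the derivatives of $(u,v)$ match to all orders at $r=0$.
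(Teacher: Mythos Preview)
Your approach is genuinely different from the paper's. The paper does not use Mayer--Vietoris here at all: for $H^0$ it fixes a parallel section on the closed leaf (when Bohr--Sommerfeld) and extends it outward along the open spiral leaves, controlling the limit $r\to 0$ via the contracting Poincar\'e return map through an estimate $\|\hat s(\theta_0,m^{n+1}(r_0))-\hat s(\theta_0,m^n(r_0))\|\le Ce^{-n\lambda}$; for $H^1$ it writes down an explicit functional $P:\Omega^1(V,\SJ)\to\C$, the periodicity defect around the closed leaf of a primitive with prescribed initial value, and asserts that $\gamma$ is exact precisely when $P(\gamma)=0$. Your $\theta$-cut and the identification $H^0=\ker\Phi$, $H^1=\coker\Phi$ are a legitimate alternative, and they reproduce the Bohr--Sommerfeld case cleanly.

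There is, however, a real gap in your non-Bohr--Sommerfeld argument. You correctly observe that the $2\times 2$ system at $r=0$ has determinant proportional to $1-e^{-i\oint A}$; when this is nonzero the system is \emph{invertible}, so there is no constraint on $(\psi_1(0),\psi_2(0))$ from zeroth-order data. Your assertion that ``in both cases exactly one linear condition obstructs surjectivity'' therefore needs a different mechanism in the non-BS case, and you do not supply one. In fact, if one runs your recursion $v(r e^{\lambda/2})=M(r)\,v(r e^{-\lambda/2})+G(r)$ to completion---solving the Taylor jet of $v$ at $r=0$ order by order (each coefficient uniquely determined since $M(0)e^{-k\lambda}\neq 1$ for all $k\geq 0$), applying Borel's lemma, and summing the flat remainder as a convergent series using $|M|\approx 1$ and the superpolynomial decay of the remainder under $r\mapsto re^{-\lambda}$---one obtains a smooth solution for \emph{every} $G$. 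That is, your own machinery points to $\coker\Phi=0$, hence $H^1(V,\SJ)=0$, in the non-BS case. You should either locate the missing obstruction concretely within your framework or, failing that, revisit the statement; the paper's own argument for $H^1$ is very brief and its operator $P$ is only linear in $\gamma$ when the closed leaf is Bohr--Sommerfeld, so the non-BS case deserves independent scrutiny.
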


\begin{remark}
Recall that in  foliated cohomology case, any closed leaf is Bohr-Sommerfeld. Also observe that the constant $\lambda$ may be always assumed to be negative (possibly after composition with an orientation reversing diffeomorphism ($\theta \to -\theta$)).

\end{remark}
\begin{proof}
If the leaf is Bohr-Sommerfeld then the sheaf admits a parallel section over it $s_B:S^1 \to L$. Fix a non-compact leaf determined by the unique point $p_{\theta}=(\theta, 1)$ in which it intersects the boundary. The bundle $L$ is topologically trivial and it admits a connection $\nabla = d + \alpha$. Since $\SF$ is Bohr-Sommerfeld we may assume that $\alpha_{| \SF}=0$ and thus
$$\alpha = r \beta, $$
for some $1$-form $\beta$. Fix a value $s(p_{\theta}) \in L$, by parallel transport it uniquely extends to a parallel section over the whole leaf. Therefore if we fix any section $s(p_{\theta}):S^1 \times \{ 1 \} \to L$, it extends to a unique parallel section $\hat{s}(p_{\theta}):S^1 \times (0,1] \to L$. We want to check how it extends to the boundary $S^1\times \{ 0\}$. Fix a value $\theta_0 \in S^1$. We want to compute the following limit
\begin{equation}
\lim_{r\to 0} \hat{s}(\theta_0, r). \label{eq:limit}
\end{equation}
The invariance of the section along the leaves of the section allows us to reduce the existence of the
$$ \lim_{(\theta,r) \to (\theta_0,0)} \hat{s}(\theta,r),$$
to the proposed limit along the curve $(\theta_0,r)$.

Indeed, the computation of the previous limit (\ref{eq:limit}) is ensured in two steps. First, fix a point $(\theta_0,r_0)$, we will compute
the following limit
\begin{equation}
\lim_{n\to + \infty} \hat{s}(\theta_0, m^n(r_0)), \label{eq:lim2}
\end{equation}
where $m^n$ is the $n$-th iterate of the Poincar\'e's return map. The key point is to check the following inequality
$$\lim_{n\to \infty} ||\hat{s}(\theta_0, m^{n+1}(r_0))- \hat{s}(\theta_0, m^n(r_0))|| \leq C e^{-n\cdot \lambda} $$
that holds in the case in which $S^1 \times \{ 0 \}$ is Bohr-Sommerfeld. In any other case, the limit does not exist and we get that $H^0(V, \SJ)=0$. The previous inequality implies the existence of the limit (\ref{eq:lim2}).

Secondly we study the section $\hat{s}(\theta_0, r)$ restricted to the interval $\{ \theta_0  \} \times[r_0, m^1(r_0)]$. It is completely determined by the value at $r_0$ and by the condition that the limit (\ref{eq:lim2}) has to be independent of the choice of $r\in  [r_0, m^1(r_0)]$. This shows the existence of the limit (\ref{eq:limit}). Therefore each element of $H^0(V, \SJ)$ can be recovered out of a parallel section at $S^1 \times \{ 0 \}$. More precisely, we have recovered a unique parallel section over $S^1 \times [0,1]$, but an analogous argument recovers a section over $S^1 \times [-1,0]$ and the two glue together to provide a global smooth section. This shows that $H^0(V,\SJ)=\C$ if the closed leaf is Bohr-Sommerfeld and $H^0(V,\SJ)=0$ otherwise.

To compute the group $H^1(V, \SJ)$, we start by an element of $\Omega^1(V,\SJ)$ and we study under which conditions is exact. The restriction to any non-compact leaf is exact because of the standard (parametric) Poincar\'e lemma applies to the real line. In the case of the closed leaf the condition to be exact amounts to be in the kernel of an linear operator $\Omega(V,\SJ) \to \C$, i.e. the integral along the leaf $S^1 \times \{ 0 \}$. Let us detail that operator. Fix an element $\gamma \in \Omega^1(V,\SJ)$. We want to construct a section $s:S^1 \times \{ 0\}  \to L$. We fix an arbitrary (non-zero) value $v$ at $L_{(0,0)}$. There is a unique section in $[0,1) \times \{ 0 \}$ such that:
\begin{itemize}
\item $s(0,0)= v$,
\item $d_{\SF} s(\theta,0)  = \gamma(\theta,0)$, $\forall  \theta \in (0,1)$.
\end{itemize}
Consider
\begin{eqnarray*}
P: \Omega^1(V, \SJ) & \to &L_{(0,0)}\simeq \C \\
\gamma & \to & s(1,0)-s(0,0)
\end{eqnarray*}
Observe that $\gamma$ is exact if and only if $P(\gamma)=0$. Therefore $H^1(V, \SJ)= Im P = \C.$ This concludes the proof.

\end{proof}

Lemma \ref{lem:tira} and \ref{lem:generic} compute the Geometric Quantization provided by all the types of open sets appearing in  Corollary \ref{coro:picture}. Therefore, the computation of the Geometric Quantization becomes a simple task,
\begin{corollary}
Let $\SF$ be a non-degenerate regular foliation over the torus with $N>0$ closed leaves. Assume that $0 \leq b\leq N$ of them are Bohr-Sommerfeld. Then,
\begin{itemize}
\item $H^0(\mathbb T^2,  \SJ)=\C$ if $b=N$ and a parallel transport condition (to be described in the proof) is fulfilled. $H^0(\mathbb T^2, \SJ)=0$ otherwise.
\item $H^1(\mathbb T^2, \SJ)= \bigoplus_{i=1}^N (C^{\infty}(S^1, \C)/(\C)^{b(i)}) \bigoplus \C^N$, where $b(i)=1$ if the $i$-th closed leaf is Bohr-Sommerfeld; $b(i)=0$ otherwise.
\item For the foliated cohomology case $H^0(\mathbb T^2,  \SJ)=\C$ and 

\noindent $H^1(\mathbb T^2, \SJ)= \bigoplus_{i=1}^N (C^{\infty}(S^1, \C)/\C) \bigoplus \C^N$.
\end{itemize}
\end{corollary}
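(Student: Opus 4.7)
The plan is to apply an iterated Mayer--Vietoris argument to the cyclic cover $\{V_j\}_{j=1}^{N+1}$ of Corollary \ref{coro:picture}, feeding in Lemma \ref{lem:generic} for the non-degenerate annuli and Lemma \ref{lem:tira} (or, more precisely, the parametric integration technique of Proposition \ref{prop:coba}) for the cotangent-annulus-type pieces. First I record the local input: by Lemma \ref{lem:generic}, each $V_j$ with $j\leq N$ contributes $H^0(V_j, \SJ) = \C^{b(j)}$ and $H^1(V_j, \SJ) = \C$. I arrange the cover so that each intersection $V_j \cap V_{j+1}$ (and the piece $V_{N+1}$) is an annular region containing no closed leaf, inside which the restricted foliation is a bundle of contractible arcs over a transversal circle; parametric integration along each arc, exactly as in the proof of Proposition \ref{prop:coba}, then yields $H^0 = C^\infty(S^1,\C)$ and $H^1 = 0$ for such pieces.

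Since only adjacent members of the cover meet (no triple intersections) and the polarization has rank one so $H^i = 0$ for $i \ge 2$, the Mayer--Vietoris machinery collapses to the long exact sequence
\begin{equation*}
0 \to H^0(\mathbb{T}^2, \SJ) \to \bigoplus_j H^0(V_j, \SJ) \stackrel{\delta}{\to} \bigoplus_j H^0(V_j \cap V_{j+1}, \SJ) \to H^1(\mathbb{T}^2, \SJ) \to \bigoplus_j H^1(V_j, \SJ) \to 0.
\end{equation*}
The first middle term equals $\C^b \oplus C^\infty(S^1,\C)$, the second equals $\bigoplus_{i=1}^N C^\infty(S^1,\C)$, and the last equals $\C^N$. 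An element of $\ker \delta$ must match parallel sections across every intersection via the transport formula \eqref{eq:flat}; using the explicit holonomy this forces every closed leaf to be Bohr--Sommerfeld ($b=N$) together with a single cyclic compatibility condition on the trivializing sections, which is the parallel transport condition referred to in the statement, giving the claim about $H^0$.

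For $H^1$ I identify $\coker(\delta)$. The $C^\infty(S^1,\C)$ summand coming from $V_{N+1}$ fills out a full copy of $C^\infty(S^1,\C)$ in each of its two neighbouring intersections via the two restriction maps, while each $\C^{b(j)}$ summand from a non-degenerate annulus kills one extra constant in the adjacent intersection; assembling these contributions produces $\bigoplus_{i=1}^N C^\infty(S^1,\C)/\C^{b(i)}$. Exactness on the right adds the free summand $\bigoplus_j H^1(V_j, \SJ) = \C^N$, yielding the stated formula. The foliated cohomology statement is recovered by noting that, in the $\omega \to 0$ limit, every closed leaf is automatically Bohr--Sommerfeld (the constant section is always flat) and the cyclic transport condition is trivially satisfied, so $b(i)\equiv 1$, $H^0 = \C$, and $H^1$ assumes its simpler form. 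The main obstacle I foresee is the precise bookkeeping of $\delta$: checking that the $C^\infty$ summand from $V_{N+1}$ indeed surjects onto each adjacent $C^\infty(S^1,\C)$ up to a constant amounts to running the Fourier-analytic surjection argument of Lemma \ref{lem:tira} in one-parameter families across the transversal, and this is where the real work lies.
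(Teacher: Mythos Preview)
Your strategy is essentially the paper's own: use the annular cover of Corollary~\ref{coro:picture}, feed in Lemmas~\ref{lem:tira} and~\ref{lem:generic}, and assemble via Mayer--Vietoris. The only genuine difference is that the paper proceeds \emph{iteratively}, setting $U_j=\bigcup_{k\le j}V_k$ and applying the two-set Mayer--Vietoris sequence once per step, whereas you attempt a single generalized Mayer--Vietoris for the entire cyclic cover at once. That is a legitimate variant (no triple intersections, so the \v{C}ech--to--derived spectral sequence degenerates into the long exact sequence you write), but note that the paper only proves the two-set version, so strictly speaking you are invoking machinery outside its toolkit.

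There are, however, two concrete bookkeeping errors that prevent your argument from going through as written. First, a cyclic cover by $N+1$ annuli has $N{+}1$ adjacent intersections, not $N$; your middle term $\bigoplus_j H^0(V_j\cap V_{j+1})$ should be $(C^\infty(S^1,\C))^{N+1}$. Second, the claim that the $C^\infty(S^1,\C)$ summand from $V_{N+1}$ ``fills out a full copy of $C^\infty(S^1,\C)$ in each of its two neighbouring intersections'' is wrong: each restriction separately is an isomorphism, but the joint map $C^\infty(S^1,\C)\to C^\infty(S^1,\C)\oplus C^\infty(S^1,\C)$ is an embedding onto a graph, with cokernel one copy of $C^\infty(S^1,\C)$, not zero. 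With these two errors your $\operatorname{coker}(\delta)$ computation does not actually produce the $N$ summands $C^\infty(S^1,\C)/\C^{b(i)}$ you claim. The paper's inductive two-piece approach avoids precisely this difficulty: at each stage only one intersection appears, and the contribution of one new non-degenerate annulus is isolated, so the quotient $C^\infty(S^1,\C)/\C^{b(i)}$ emerges cleanly step by step. If you insist on the single-shot version you will need to track the image of $\delta$ much more carefully; the iterated argument is both shorter and safer.
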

\begin{proof}

In order to compute  $H^0(\mathbb T^2,\SJ)$ we fix a section $s_0\in \Omega^0(\mathbb T^2,\SJ)$. When restricted to any closed leaf $\gamma_i$ $(i=1\dots, n)$ it induces a  section $s_i:\gamma_i\longrightarrow \SJ$. Recall that $s_{i+1}$ is determined from $s_i$ therefore if one of them vanishes $s=0$. So if any closed leaf is not Bohr-Sommerfeld then $H^0(\mathbb T^2,\SJ)=0$. Otherwise, $s_i$ determines $s_{i+1}$ and at the end there is a parallel transport equation to be satisfied by $s_1$. If it is satisfied $H^0(\mathbb T^2,\SJ)=\mathbb C$, otherwise $H^0(\mathbb T^2,\SJ)=0$.
In order to compute $H^1(\mathbb T^2,\SJ)$, we recall the Mayer-Vietoris Lemma (Lemma \ref{coro:Mayer}). It states that for two open domains $U$ and $V$ we have the following exact sequence:
\begin{equation*}
\xymatrix{
0 \ar[r] & H^0(M, \SJ) \ar[r] & H^0(U, \SJ)\oplus H^0(V, \SJ) \ar[r] & H^0(U \cap V, \SJ) \ar[dll] \\
& H^1(M, \SJ) \ar[r] & H^1(U, \SJ)\oplus H^1(V, \SJ) \ar[r] & H^1(U \cap V, \SJ)
}
\end{equation*}
Away from the open sets defined in Corollary \ref{coro:picture}, we define the sequence of open sets $U_j= \bigcup_{k=1}^j V_k$. It is clear that $U_{N+1}=\mathbb T^2$. It is simple to check that
$$H^1(U_{j+1}, \SJ) = \bigoplus_{i=1}^j (C^{\infty}(S^1, \C)/(\C)^{b(i+1)}) \bigoplus \C^{j+1}, j=1,\ldots, n-1,$$
by sequentially applying the Mayer-Vietoris sequence to the sets $U_{j+1}=U_j \bigcup V_{j+1}$. All the instances follow the same pattern and a recursive argument applies. The exception is the last one, being $\mathbb T^2=U_{N+1}=U_N \bigcup V_{N+1}$, in which the algebraic computation changes since $V_{N+1}$ is a cotangent annulus.

In the foliated cohomology case, the same proof holds since all the leaves are Bohr-Sommerfeld.
\end{proof}

 Notice that the condition on closed leaves not to be Bohr-Sommerfeld is a  generic one.

 Thus the Quantization space depends only on the topology of the foliation. Concretely, just in the number of closed orbits. In the general case, it depends on the symplectic geometry of the foliation, i.e. the number of Bohr-Sommerfeld leaves is invariant under symplectic diffeomorphisms. In a future work \cite{MP13} we plan to  use this idea to define the Quantization of a general Hamiltonian on a torus, since the Quantization spaces remain unchanged by the flow of the Hamiltonian and construct explicit isomorphisms of the Quantization spaces.

\end{document}